\newif\csname ifGin@setpagesize\endcsname 
\numberwithin{equation}{section}
\numberwithin{figure}{section}
\theoremstyle{plain}
\newtheorem{thm}{Theorem}[section]
  \crefname{thm}{Theorem}{Theorems}
  \newtheorem{lem}[thm]{Lemma}
  \crefname{lem}{Lemma}{Lemmas}
  \newtheorem{prop}[thm]{Proposition}
  \crefname{prop}{Proposition}{Propositions}
	\crefname{cor}{Corollary}{Corollaries}
  \newtheorem*{theor1}{Theorem 1}
  \newtheorem*{theor2}{Theorem 2}
  \newtheorem*{ack}{Acknowledgments}
\theoremstyle{definition}
  \newtheorem{defi}[thm]{Definition}
  \crefname{defi}{Definition}{Definitions}
  \theoremstyle{remark}
  \crefname{ntn}{Notation}{Notations}
	 \theoremstyle{remark}
  \newtheorem{rem}[thm]{Remark}
  \crefname{rem}{Remark}{Remarks}
  \newtheorem{ex}[thm]{Example}
  \crefname{ex}{Example}{Examples}
\def\r{\mathbb{R}}
\def\c{\mathbb{C}}
\def\q{\mathbb{Q}}
\def\z{\mathbb{Z}}
    \newcommand*{\qrr@gobblenexttocentry}[5]{}
    \newcommand*{\qrr@gobblenexttocentry}[4]{}
\newcommand*{\addsubsection}{%
    \addtocontents{toc}{\protect\qrr@gobblenexttocentry}%
    \subsection}
\begin{document}
\title[Combinatorics of semi-toric degenerations of Schubert varieties in type C]{Combinatorics of semi-toric degenerations of Schubert varieties in type C}

\author{Naoki FUJITA}

\address[Naoki FUJITA]{Faculty of Advanced Science and Technology, Kumamoto University, 2-39-1 Kurokami, Chuo-ku, Kumamoto 860-8555, Japan.}

\email{fnaoki@kumamoto-u.ac.jp}

\author{Yuta NISHIYAMA}

\address[Yuta NISHIYAMA]{Mathematical Science Education Center, Headquarters for Admissions and Education, Kumamoto University, 2-40-1 Kurokami, Chuo-ku, Kumamoto 860-8555, Japan.}

\email{ynishiyama@kumamoto-u.ac.jp}

\subjclass[2020]{Primary 05E10; Secondary 14M15, 14N15}

\keywords{Schubert calculus, semi-toric degeneration, skew pipe dream, mitosis operator, signed permutation}

\thanks{The work of the first named author was supported by JSPS Grant-in-Aid for Early-Career Scientists (No.\ 20K14281) and by MEXT Japan Leading Initiative for Excellent Young Researchers (LEADER) Project.}

\date{}

\begin{abstract}
An approach to Schubert calculus is to realize Schubert classes as concrete combinatorial objects such as Schubert polynomials.
Using the polytope ring of the Gelfand--Tsetlin polytopes, Kiritchenko--Smirnov--Timorin realized each Schubert class as a sum of reduced Kogan faces.  
The first named author introduced a generalization of reduced Kogan faces to symplectic Gelfand--Tsetlin polytopes using a semi-toric degeneration of a Schubert variety, and extended the result of Kiritchenko--Smirnov--Timorin to type C case.  
In this paper, we introduce a combinatorial model to this type C generalization using a kind of pipe dream with self-crossings. 
As an application, we prove that the type C generalization can be constructed by skew mitosis operators.
\end{abstract}
\maketitle
\ytableausetup{smalltableaux}
\tableofcontents 
\section{Introduction}\label{s:section}

A goal of Schubert calculus is to compute the structure constants of the cohomology ring of a flag variety with respect to the basis consisting of Schubert classes; see \cite{KST, KnM, Man} for the history of Schubert calculus.
One approach to such computation is to realize Schubert classes as concrete combinatorial models such as Schubert polynomials. 
To study such combinatorial models, toric degenerations are useful. 
A toric degeneration is a flat degeneration of a projective variety to a toric variety, which can be used to apply the theory of toric varieties to other projective varieties. 
In the case of a flag variety, its toric degeneration with desirable properties induces degenerations of Schubert and opposite Schubert varieties to not necessarily irreducible torus-invariant closed subvarieties, called semi-toric degenerations. 
Kogan--Miller \cite{KoM} constructed semi-toric degenerations of opposite Schubert varieties as some quotients of Knutson--Miller's semi-toric degenerations \cite{KnM} of opposite matrix Schubert varieties which give a geometric proof of the pipe dream formula of Schubert polynomials. 
Since semi-toric degenerations can be constructed for Schubert and opposite Schubert varieties in general Lie type, they give us a hint to extend theories of Schubert calculus in type $A$ to other Lie types.
Using semi-toric degenerations of Schubert varieties for symplectic Gelfand--Tsetlin polytopes, the first named author \cite{Fuj} extended Kiritchenko--Smirnov--Timorin's combinatorial model \cite{KST} in type A to type $C$ case. 
The purpose of the present paper is to develop combinatorics of this type C model of Schubert classes. 

To be more precise, we first consider $G = SL_{n+1}(\c)$ (of type $A$). 
Then the Weyl group $W$ of $G$ is given as the symmetric group $S_{n+1}$. 
Let $B \subseteq G$ be the subgroup of upper triangular matrices, and $G/B$ the full flag variety. 
For $w \in S_{n+1}$, denote by $\ell(w)$ the length of $w$, and by $X_w \subseteq G/B$ (resp., $X^w \subseteq G/B$) the Schubert variety (resp., the opposite Schubert variety) with $\dim_\c(X_w) = \dim_\c(G/B) - \dim_\c (X^w) = \ell(w)$ (see \cite[Section 1.2]{Bri} for the precise definitions of $X_w$ and $X^w$). 
Then the cohomology class $[X^w]$, called a Schubert class, coincides with $[X_{w_0 w}]$ for each $w \in S_{n+1}$ (see, for instance, \cite[Section 1.3]{Bri}), where $w_0 \in S_{n+1}$ denotes the longest element. 
In addition, the set $\{[X^w] \mid w \in S_{n+1}\}$ of Schubert classes forms a $\z$-basis of the cohomology ring $H^\ast(G/B; \z)$ of $G/B$.
To study this basis, the \emph{Borel description} of the (rational) cohomology ring $H^\ast(G/B; \q)$ is useful, which states that $H^\ast(G/B; \q)$ is isomorphic to the coinvariant algebra of $S_{n+1}$ (see \cite{Man}).
This description allows us to represent each Schubert class as a polynomial. 
Lascoux--Sch\"{u}tzenberger \cite{LS} gave a specific choice $\{\mathfrak{S}_w({\bm x}) \mid w \in S_{n+1}\}$ of representatives, called Schubert polynomials, which have good combinatorial properties. 
Billey--Jockusch--Stanley \cite{BJS} and Fomin--Stanley \cite{FS} gave an explicit combinatorial formula 
\begin{equation}\label{eq:pipe_dream_formula}
\begin{aligned}
\mathfrak{S}_w({\bm x}) = \sum_{D \in RP(w)} {\bm x}^D
\end{aligned}
\end{equation}
of $\mathfrak{S}_w({\bm x})$, called the pipe dream formula, where we used the notation in \cite[Corollary 2.1.3]{KnM}. 
A diagrammatic interpretation of the index set $RP(w)$ was invented by Fomin--Kirillov \cite{FK} and developed by Bergeron--Billey \cite{BerBil} and by Knutson--Miller \cite{KnM, Mil}.
Under this diagrammatic interpretation, each element of $RP(w)$ is called an rc-graph or a reduced pipe dream.
Let $P_+$ be the set of dominant integral weights, $P_{++} \subseteq P_+$ the set of regular dominant integral weights, and $GT(\lambda)$ the Gelfand--Tsetlin polytope for $\lambda \in P_+$. 
By definition, each reduced pipe dream $D \in RP(w)$ naturally corresponds to specific faces $F_D(GT(\lambda))$ and $F_D^\vee(GT(\lambda))$ of $GT(\lambda)$, called a reduced Kogan face and a reduced dual Kogan face, respectively, such that
\[w(F_D(GT(\lambda)))^{-1} = w(F_D^\vee(GT(\lambda))) = w\] 
in the notation of \cite[Sections 3.3 and 4.3]{KST} (see \cite[Section 2.2.1]{Kog} and \cite[Section 4]{KoM}). 
For $\lambda \in P_{++}$, Kogan--Miller \cite{KoM} constructed a semi-toric degeneration of $X^{w_0 w w_0}$ whose limit corresponds to the union of reduced dual Kogan faces $F_D^\vee(GT(\lambda))$, $D \in RP(w)$.
Note that the union of reduced Kogan faces $F_D(GT(\lambda))$, $D \in RP(w)$, appears as a semi-toric degeneration of the Schubert variety $X_{w_0 w^{-1}}$ (see \cite{Fuj}). 
Relations between Schubert classes and reduced (dual) Kogan faces were studied in \cite{Kog, Kir_IMRN, KST}. 
Using an isomorphism between $H^0(SL_{n+1}(\c)/B; \z)$ and the polytope ring of $\{GT(\lambda) \mid \lambda \in P_+\}$ (see \cite{Kav1}), Kiritchenko--Smirnov--Timorin \cite{KST} formulated the following equalities: 
\begin{equation}\label{eq:KST_realization}
\begin{aligned}
[X^w] = \sum_{D \in RP(w^{-1})} [F_D(GT(\lambda))] = \sum_{D \in RP(w_0 w w_0)} [F_D^\vee(GT(\lambda))].
\end{aligned}
\end{equation}

We next consider $G = Sp_{2n}(\c)$ (of type $C$). 
Then the Weyl group $W$ of $G$ is given as the group of signed permutations (see Section \ref{s:symplectic_group} for more details). 
We use the notations similar to the case of type $A$ such as $G/B$, $X_w$, $X^w$, $P_+$, and $P_{++}$. 
Denote by $R(w)$ the set of reduced words for $w \in W$, and take ${\bm i}_C \in R(w_0)$ as in \eqref{eq:reduced_word_type_C}. 
Then Littelmann \cite{Lit} proved that the string polytope $\Delta_{{\bm i}_C}(\lambda)$ associated with ${\bm i}_C$ and $\lambda \in P_+$ is unimodularly equivalent to the symplectic Gelfand--Tsetlin polytope $SGT(\lambda)$. 
For $\lambda \in P_{++}$, Caldero \cite{Cal} constructed a toric degeneration of $G/B$ to the normal projective toric variety $Z(\Delta_{{\bm i}_C}(\lambda))$ corresponding to $\Delta_{{\bm i}_C}(\lambda)$ (see also \cite{Kav2}). 
Using string parametrizations of Demazure and opposite Demazure crystals, Morier-Genoud \cite{Mor} showed that Caldero's toric degeneration \cite{Cal} induces semi-toric degenerations of $X_w$ and $X^w$ for $w \in W$. 
The first named author \cite{Fuj} determined the limits of $X_w$ and $X^w$ in the string polytope $\Delta_{{\bm i}_C}(\lambda)$, and generalized the realization \eqref{eq:KST_realization} by Kiritchenko--Smirnov--Timorin \cite{KST} using such limits.
When $n = 2, 3$, such generalization was previously studied in \cite{Kir, KP}. 
Unlike the case of type $A$, the limit of $X_w$ and that of $X^w$ have different kinds of combinatorial properties. 
Indeed, the irreducible components of the limit of $X^w$ are parametrized by some reduced subwords of ${\bm i}_C$ (see \cite[Section 4]{Fuj}) while those of $X_w$ seem not have any good relation with reduced subwords even in the case $G = Sp_4(\c)$ (see \cref{ex:skew_pipe_dreams_rank_2}). 
In the present paper, we develop combinatorics of the irreducible components of the limit of $X_w$, which inherit information on \emph{skew mitosis operators} as we see below. 
These irreducible components can be parametrized by a certain set $\mathscr{M}(w)$ of \emph{skew pipe dreams} (see Section \ref{s:skew_pipe_dream} for the precise definitions of skew pipe dreams and $\mathscr{M}(w)$). 
For each skew pipe dream $D$, let $F_D(\Delta_{{\bm i}_C}(\lambda))$ denote the corresponding face of $\Delta_{{\bm i}_C}(\lambda)$.
As we have seen above, the set $\mathscr{M}(w)$ and the corresponding faces $F_D(\Delta_{{\bm i}_C}(\lambda))$, $D \in \mathscr{M}(w)$, have the following three geometric or representation-theoretic properties (see \cite[Section 6]{Fuj} for more details): 
\begin{itemize}
\item for $\lambda \in P_{++}$, the union $\bigcup_{D \in \mathscr{M}(w)} F_D(\Delta_{{\bm i}_C}(\lambda))$ corresponds to the limit of a semi-toric degeneration of $X_w$;
\item for $\lambda \in P_+$, the set $\bigcup_{D \in \mathscr{M}(w)} (F_D(\Delta_{{\bm i}_C}(\lambda)) \cap \z^N)$ of lattice points gives a string parametrization of a Demazure crystal, where $N \coloneqq \ell(w_0)$; 
\item for $\lambda \in P_{++}$, the Schubert class $[X_w]$ can be realized as the sum $\sum_{D \in \mathscr{M}(w)} [F_D(\Delta_{{\bm i}_C}(\lambda))]$ through the theory of polytope rings. 
\end{itemize}
In the present paper, we introduce a combinatorial model of the set $\mathscr{M}(w)$ using a kind of pipe dream with self-crossings. 
More precisely, we associate to each skew pipe dream $D$ a diagram $\mathscr{G}(D)$ of ``$n$ pipes with self-crossings'' (see \cref{d:path_model_skew} for the precise definition).
The diagram naturally gives a signed permutation $w_D$, where we have $w_D(j) < 0$ if and only if the $j$-th pipe $L_j$ has an even number of self-crossings for $1 \leq j \leq n$. 
For $w \in W$, a skew pipe dream $D$ with $w_D = w$ is said to be reduced if the cardinality $|D|$ is the minimum among skew pipe dreams $D^\prime$ with $w_{D^\prime} = w$.
The following is the first main result of the present paper, which gives a path model to $\mathscr{M}(w)$. 

\begin{theor1}[{see Theorems \ref{t:main_result_1} and \ref{t:main_result_2}}]
For all $w \in W$, the set $\mathscr{M}(w)$ coincides with the set of reduced skew pipe dreams $D$ such that $w_D = w$. 
\end{theor1}

The set $RP(w)$ of reduced pipe dreams in type $A$ have the following two kinds of combinatorial constructions: 
\begin{itemize}
\item the set $RP(w)$ is stable under ladder moves and is obtained from a specific element $D(w) \in RP(w)$ by applying sequences of ladder moves (see \cite{BerBil});
\item the set $RP(w)$ is obtained from the only one element of $RP(w_0)$ by applying a sequence of (transposed) mitosis operators (see \cite{KnM, Mil}). 
\end{itemize}
In the present paper, we show that these constructions can be generalized to the set $\mathscr{M}(w)$ in type $C$. 
The notion of transposed mitosis operators ${\rm mitosis}_j^\top$ for skew pipe dreams, called transposed skew mitosis operators, was introduced by Kiritchenko \cite{Kir}, which implicitly leads to the definition of ladder moves for skew pipe dreams. 
In addition, the specific element $D(w)$ in type $A$ is generalized by the first named author \cite{Fuj} to an element $D(w)$ of the set $\mathscr{M}(w)$ in type $C$. 
Let $\mathscr{L}(D(w))$ be the set of skew pipe dreams obtained from $D(w)$ by applying sequences of ladder moves.
Then the following is the second main result of the present paper. 

\begin{theor2}[{see \cref{t:main_result_2}}]
For $w \in W$, the set $\mathscr{M}(w)$ is stable under ladder moves. 
In addition, for $(j_1, \ldots, j_\ell) \in R(w)$, the following equalities hold:
\[\mathscr{L}(D(w)) = \mathscr{M}(w) = {\rm mitosis}_{j_\ell}^\top \cdots {\rm mitosis}_{j_1}^\top (SY_n),\]
where $SY_n$ is the only one element of $\mathscr{M}(e)$ for the identity element $e \in W$.
\end{theor2}

In our proofs of Theorems 1 and 2, we first prove the assertion of Theorem 1 for $\mathscr{L}(D(w))$ in \cref{t:main_result_1}. 
Then we show Theorem 2 in \cref{t:main_result_2}, which implies Theorem 1 for $\mathscr{M}(w)$. 
Finally, we mention some previous works. 
Type $C$ pipe dreams were already considered in the studies of Schubert polynomials \cite{FK_typeB, ST} and of double Grothendieck polynomials \cite{KiNa}. 
However, such type $C$ pipe dreams are different from our reduced skew pipe dreams. 
The former inherits information on reduced subwords while the latter does not. 
Type $C$ pipe dreams studied in \cite{FK_typeB, KiNa, ST} are closely related to semi-toric degenerations of opposite Schubert varieties (not of Schubert varieties); see \cite[Section 4]{Fuj}.
A convex-geometric version of mitosis operators was introduced by Kiritchenko \cite{Kir}. 
Its relation with skew mitosis operators was studied in \cite{Kir2}. 

This paper is organized as follows. 
In Section 2, we review some basic definitions on symplectic groups and their Weyl groups. 
In Section 3, we recall some basic notions on skew pipe dreams and skew mitosis operators. 
Some combinatorial properties of $D(w)$ are also proved in this section. 
In Section 4, we introduce a path model to reduced skew pipe dreams using a kind of pipe dream with self-crossings.
In Section 5, we realize reduced skew pipe dreams using ladder moves and skew mitosis operators. 

\begin{ack}\normalfont
The authors are grateful to Tomoo Matsumura for useful comments and fruitful discussions.
\end{ack}

\section{Basic definitions on symplectic groups}\label{s:symplectic_group}

In this section, we review some basic definitions on symplectic groups, following \cite[Section 3]{LakSes} and \cite[Section 8.1]{BjoBre}. 
Geometric and representation-theoretic parts of this and the next sections are not necessary to prove the main results of the present paper.
Hence readers who are mainly interested in the proofs of the main results may skip them. 
Such parts are included because they give geometric and representation-theoretic applications of the main results. 
For $n \in \mathbb{Z}_{\geq 2}$, let $\widetilde{G} = SL_{2n}(\mathbb{C})$ be the complex special linear group of degree $2n$, $\widetilde{B} \subseteq \widetilde{G}$ the Borel subgroup consisting of the upper-triangular matrices, and $\widetilde{H}$ the maximal torus of $\widetilde{B}$ consisting of the diagonal matrices. 
Denoting by $N_{\widetilde{G}}(\widetilde{H})$ the normalizer of $\widetilde{H}$ in $\widetilde{G}$, the Weyl group of type $A_{2n-1}$ is defined by $\widetilde{W} \coloneqq N_{\widetilde{G}}(\widetilde{H})/\widetilde{H}$. 
Let $\{{\bm e}_1, \ldots, {\bm e}_{2n}\}$ be the standard basis of $\mathbb{C}^{2n}$. 
Under the standard representation of $\widetilde{G}$ on $\mathbb{C}^{2n}$, the Weyl group $\widetilde{W}$ is regarded as the symmetric group on $\{\c {\bm e}_1, \ldots, \c {\bm e}_{2n}\}$.
Let 
\[J_n \coloneqq \begin{pmatrix}
0 & 0 & \cdots & 1 \\
\vdots & \vdots & \iddots & \vdots \\
0 & 1 & \cdots & 0 \\
1 & 0 & \cdots & 0
\end{pmatrix}\]
be an integer $n \times n$ anti-diagonal matrix whose anti-diagonal entries are all $1$. 
We take an integer $2n \times 2n$ anti-diagonal matrix $\overline{w}_0$ as
\[\overline{w}_0 \coloneqq \begin{pmatrix}
O_n & J_n \\
-J_n & O_n
\end{pmatrix},\]
where $O_n$ denotes the $n \times n$ zero matrix. 
Define an algebraic group automorphism $\omega \colon \widetilde{G} \xrightarrow{\sim} \widetilde{G}$ of $\widetilde{G} = SL_{2n}(\mathbb{C})$ by 
\[\omega(g) \coloneqq (\overline{w}_0)^{-1} \cdot (g^T)^{-1} \cdot \overline{w}_0\] 
for $g \in \widetilde{G}$, where $g^T$ denotes the transpose of $g$. 
Then the fixed point subgroup 
\[G \coloneqq (\widetilde{G})^\omega = \{g \in \widetilde{G} \mid \omega(g) = g\}\]
of $\widetilde{G}$ coincides with the symplectic group 
\[Sp_{2n}(\mathbb{C}) = \{g \in SL_{2n}(\mathbb{C}) \mid g^T \overline{w}_0 g = \overline{w}_0\}\]
with respect to the skew-symmetric matrix $\overline{w}_0$. 
This is the connected simply-connected simple algebraic group of type $C_n$. 
We set $B \coloneqq \widetilde{B} \cap (\widetilde{G})^\omega$ and $H \coloneqq \widetilde{H} \cap (\widetilde{G})^\omega$.
Then $B$ is a Borel subgroup of $G$, and $H$ is a maximal torus of $G$. 
Hence the quotient variety $G/B$ is the full flag variety of type $C_n$. 
We identify the set $I$ of vertices of the Dynkin diagram with $[n] \coloneqq \{1, 2, \ldots, n\}$ as follows:
\begin{align*}
&C_n\ \begin{xy}
\ar@{=>} (50,0) *++!D{1} *\cir<3pt>{};
(60,0) *++!D{2} *\cir<3pt>{}="C"
\ar@{-} "C";(65,0) \ar@{.} (65,0);(70,0)^*!U{}
\ar@{-} (70,0);(75,0) *++!D{n-1} *\cir<3pt>{}="D"
\ar@{-} "D";(85,0) *++!D{n} *\cir<3pt>{}="E"
\end{xy}.
\end{align*}
Then the Cartan matrix $(c_{i, j})_{i, j \in I}$ of type $C_n$ is given by $c_{2, 1} = -2$, $c_{1, 2} = c_{k, k+1} = c_{k+1, k} = -1$ for $2 \leq k \leq n-1$, $c_{k, k} = 2$ for $1 \leq k \leq n$, and $c_{i, j} = 0$ when $|i-j| \geq 2$. 
Let $N_G(H)$ denote the normalizer of $H$ in $G$, and $W \coloneqq N_G(H)/H$ the Weyl group of type $C_n$.
Since $\omega(\widetilde{H}) = \widetilde{H}$ and $\omega(N_{\widetilde{G}}(\widetilde{H})) = N_{\widetilde{G}}(\widetilde{H})$, the automorphism $\omega$ induces an automorphism $\omega \colon \widetilde{W} \xrightarrow{\sim} \widetilde{W}$. 
Then the Weyl group $W$ of type $C_n$ is identified with the fixed point subgroup $(\widetilde{W})^\omega$. 
Write ${\bm f}_{-n} \coloneqq {\bm e}_1, \ldots, {\bm f}_{-1} \coloneqq {\bm e}_n, {\bm f}_1 \coloneqq {\bm e}_{n+1}, \ldots, {\bm f}_n \coloneqq {\bm e}_{2n}$. 
By the action of $\widetilde{W}$ on $\{\c {\bm e}_1, \ldots, \c {\bm e}_{2n}\} = \{\c {\bm f}_{-n}, \ldots, \c {\bm f}_{-1}, \c {\bm f}_1, \ldots, \c {\bm f}_n\}$, the Weyl group $\widetilde{W}$ is regarded as the symmetric group on $\tilde{I} \coloneqq \{\pm i \mid i \in [n]\}$.
A permutation $\sigma$ of $\tilde{I}$ is called a \emph{signed permutation} if $\sigma(-i) = -\sigma(i)$ for all $i \in [n]$. 
Then the Weyl group $W = (\widetilde{W})^\omega$ is identified with the set of signed permutations of $\tilde{I}$. 
Let $\{s_i \mid i \in I = [n]\} \subseteq W$ denote the set of simple reflections in $W$.
As signed permutations, these are given as follows: 
\[s_1 = (1\quad {\textstyle -1}),\quad s_k = ({\textstyle k-1}\quad k) ({\textstyle -(k-1)}\quad {\textstyle -k}),\ 2 \leq k \leq n,\] 
where $(i\quad j)$ denotes the transposition of $i$ and $j$. 
Note that $W$ is generated by $s_1, s_2, \ldots, s_n$ as a group. 
A sequence ${\bm i} = (i_1, i_2, \ldots, i_r) \in I^r$ is called a \emph{reduced word} for $w \in W$ if $w = s_{i_1} s_{i_2} \cdots s_{i_r}$ and if $r$ is the minimum number among such expressions of $w$. 
In this case, we call $r$ the \emph{length} of $w$, denoted by $\ell(w)$. 
Let $R(w)$ be the set of reduced words for $w$.

\begin{prop}[{see, for instance, \cite[Proposition 8.1.1]{BjoBre}}]\label{p:length_formula}
For each $w \in W$, it holds that 
\[\ell(w) = |\{(i, j) \in \tilde{I} \times I \mid -j \leq i < j,\ w(i) > w(j)\}|.\]
\end{prop}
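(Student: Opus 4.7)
The plan is to show that the right-hand side, which I denote $\mathrm{inv}(w)$, satisfies the same recursion as the length function: $\mathrm{inv}(e) = 0$, and for every $w \in W$ and every simple reflection $s_k$,
\[
\mathrm{inv}(w s_k) = \mathrm{inv}(w) + 1 \quad \Longleftrightarrow \quad \ell(w s_k) = \ell(w) + 1.
\]
Granted this, any reduced word $(i_1, \ldots, i_r) \in R(w)$ gives partial products whose $\mathrm{inv}$-values strictly increase by $1$ at each step starting from $0$, so that $\mathrm{inv}(w) = r = \ell(w)$.

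The base case $\mathrm{inv}(e) = 0$ is immediate: $e(i) = i$ is incompatible with the simultaneous conditions $i < j$ and $e(i) > e(j)$. For the inductive step, I would invoke the standard Coxeter-theoretic criterion that $\ell(w s_k) = \ell(w) + 1$ holds exactly when $w$ sends the $k$-th simple root to a positive root; in the signed-permutation realization of $W$, this translates to $w(1) > 0$ for $k = 1$ and to $w(k - 1) < w(k)$ for $k \geq 2$. It therefore suffices to prove that $\mathrm{inv}(w s_k)$ changes by the correct sign under these same conditions. Since $s_k$ acts as the product of the transpositions $(k-1,\ k)$ and $(-(k-1),\ -k)$ for $k \geq 2$ (and as the sign change on $\pm 1$ for $k = 1$), only inversion pairs $(i, j)$ with at least one coordinate in the support of $s_k$ can change status.

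The main obstacle is the case analysis needed to track these changes, and the key observation is that the symmetry $w(-i) = -w(i)$ forces most affected pairs to come in matched packets whose net contributions cancel. For $k \geq 2$ and any third index $i \in \tilde{I}$ with $-k < i < k - 1$, the pairs $(i,\ k - 1)$ and $(i,\ k)$ merely interchange their inversion statuses under the swap, and analogously $(-k,\ j)$ and $(-(k-1),\ j)$ cancel each other for any $j > k$; the mixed pair $(-(k-1),\ k)$ is preserved because $w(-(k-1)) = -w(k-1)$ and $w(-k) = -w(k)$. The only surviving pair is $(k - 1,\ k)$ (and $(-1,\ 1)$ when $k = 1$), whose inversion status flips by exactly one unit in the direction dictated by the positivity of $w(\alpha_k)$. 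This delivers the required $\pm 1$ change and closes the induction.
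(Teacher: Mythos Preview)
The paper does not prove this proposition; it simply cites \cite[Proposition 8.1.1]{BjoBre}. So there is no argument in the paper to compare against, and your inductive approach via the exchange condition is the standard one.

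That said, your case analysis for $k \geq 2$ is not quite complete. The phrase ``third index $i$ with $-k < i < k-1$'' literally allows $i = -(k-1)$, but for that value the swap claim fails: $s_k$ moves $-(k-1)$ as well, so the inversion status of $(-(k-1),\,k-1)$ passes from $[\,w(k-1)<0\,]$ to $[\,w(k)<0\,]$ rather than matching the old status of $(-(k-1),\,k)$. You correctly note separately that $(-(k-1),\,k)$ is preserved, but this leaves $(-(k-1),\,k-1)$ unaccounted for; the companion pair $(-k,\,k)$, which you also omit, passes from $[\,w(k)<0\,]$ to $[\,w(k-1)<0\,]$, and together these two pairs cancel. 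You should also explicitly record that $(k-1,\,j)$ and $(k,\,j)$ swap for $j>k$ (you only state the negative version). Once these three pairs of pairs are added to the bookkeeping, the only uncancelled change is indeed $(k-1,\,k)$ (or $(-1,\,1)$ for $k=1$), and the induction closes as you intend.
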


Define $w_0 \in W$ by $w_0 = (1\quad {\textstyle -1}) (2\quad {\textstyle -2}) \cdots (n\quad {\textstyle -n})$, which is called the \emph{longest element} of $W$. 
Note that the skew-symmetric matrix $\overline{w}_0$ is an element of $N_G(H)$, and we have $w_0 = \overline{w}_0 \bmod H$. 
Write $N \coloneqq n^2$, and define ${\bm i}_C = (i_1, \ldots, i_N) \in I^N$ by
\begin{equation}\label{eq:reduced_word_type_C}
\begin{aligned}
{\bm i}_C \coloneqq (1, \underbrace{2, 1, 2}_3, \underbrace{3, 2, 1, 2, 3}_5, \ldots, \underbrace{n, n-1, \ldots, 1, \ldots, n-1, n}_{2n-1}).
\end{aligned}
\end{equation}
Since we have $\ell(w_0) = n^2$ by \cref{p:length_formula}, it follows that ${\bm i}_C \in R(w_0)$. 
Define a rational convex polyhedral cone $\mathcal{C}_{{\bm i}_C}$ to be the set of
\begin{align*}
(a_1 ^{(1)}, \underbrace{b_1 ^{(2)}, a_2 ^{(1)}, a_1 ^{(2)}}_3, \underbrace{b_1 ^{(3)}, b_2 ^{(2)},  a_3 ^{(1)},  a_2 ^{(2)},  a_1 ^{(3)}}_5, \ldots, \underbrace{b_1 ^{(n)}, \ldots, b_{n-1} ^{(2)}, a_n ^{(1)}, \ldots, a_1 ^{(n)}}_{2n-1}) \in \r^N
\end{align*}
satisfying the following inequalities:
\begin{align*}
&a_1 ^{(1)} \geq 0, & &b_1 ^{(2)} \geq a_2 ^{(1)} \geq a_1 ^{(2)} \geq 0, & &\ldots, & &b_1 ^{(n)} \geq \cdots \geq b_{n-1} ^{(2)} \geq a_n ^{(1)} \geq \cdots \geq a_1 ^{(n)} \geq 0.
\end{align*}
Then Littelmann \cite[Theorem 6.1]{Lit} proved that $\mathcal{C}_{{\bm i}_C}$ coincides with the string cone associated with the reduced word ${\bm i}_C$ (see also \cite[Section 3.2]{BZ}). 
Let $P_+$ be the set of dominant integral weights, and $P_{++} \subseteq P_+$ the set of regular dominant integral weights. 
Denoting the set of fundamental weights by $\{\varpi_i \mid i \in I\}$, we have $P_+ = \sum_{i \in I} \z_{\geq 0} \varpi_i$ and $P_{++} = \sum_{i \in I} \z_{> 0} \varpi_i$. 
For $\lambda = \lambda_1 \varpi_1 + \cdots + \lambda_n \varpi_n \in P_+$, define a rational convex polytope $\Delta_{{\bm i}_C}(\lambda) \subseteq \r^N$ to be the set of $(a_1, \ldots, a_N) \in \mathcal{C}_{{\bm i}_C}$ satisfying
\[a_j \leq \lambda_{i_j} - a_{j+1} c_{i_j, i_{j+1}} - \cdots - a_N c_{i_j, i_N}\]
for all $1 \leq j \leq N$. 
Then we see by \cite[Section 1]{Lit} that $\Delta_{{\bm i}_C}(\lambda)$ coincides with the string polytope associated with ${\bm i}_C \in R(w_0)$ and $\lambda \in P_+$.
We do not review the original definitions of string cones and string polytopes since we do not use them in the present paper, but note that these are defined from certain representation-theoretic objects, called \emph{Kashiwara crystal bases} (see \cite{Kas5} for a survey on crystal bases). 
More precisely, the sets $\mathcal{C}_{{\bm i}_C} \cap \z^N$ and $\Delta_{{\bm i}_C}(\lambda) \cap \z^N$ of their lattice points give string parametrizations of crystal bases.
See \cite[Section 1]{Lit} and \cite[Section 3.2]{BZ} for more details on string cones and string polytopes. 
The string polytope $\Delta_{{\bm i}_C}(\lambda)$ is an integral polytope for all $\lambda \in P_+$, and it is $N$-dimensional if $\lambda \in P_{++}$.
Let us arrange the equations of the facets of $\mathcal{C}_{{\bm i}_C}$ as 
\begin{align*}
a_1 ^{(1)} = 0,\ b_1 ^{(2)} = a_2 ^{(1)},\ a_2 ^{(1)} = a_1 ^{(2)},\ a_1 ^{(2)} = 0,\ b_1 ^{(3)} = b_2 ^{(2)},\ \ldots,\ a_2 ^{(n-1)} = a_1 ^{(n)},\ a_1 ^{(n)} = 0,
\end{align*}
and denote the corresponding facets of $\mathcal{C}_{{\bm i}_C}$ by $F_1(\mathcal{C}_{{\bm i}_C}), \ldots, F_N(\mathcal{C}_{{\bm i}_C})$, respectively. 
For $1 \leq j \leq N$ and $\lambda \in P_+$, let $F_j(\Delta_{{\bm i}_C}(\lambda))$ be the face of $\Delta_{{\bm i}_C}(\lambda)$ given by $F_j(\Delta_{{\bm i}_C}(\lambda)) \coloneqq \Delta_{{\bm i}_C}(\lambda) \cap F_j(\mathcal{C}_{{\bm i}_C})$.
For ${\bm k} = (k_1, \ldots, k_\ell)$ with $\ell \geq 0$ and with $1 \leq k_1 < \cdots < k_\ell \leq N$, we write $F_{\bm k}(\Delta_{{\bm i}_C}(\lambda)) \coloneqq F_{k_1}(\Delta_{{\bm i}_C}(\lambda)) \cap \cdots \cap F_{k_\ell}(\Delta_{{\bm i}_C}(\lambda))$.
For $\lambda \in P_+$, Littelmann \cite[Corollary 7]{Lit} gave a unimodular affine transformation from the string polytope $\Delta_{{\bm i}_C}(\lambda)$ to the symplectic Gelfand--Tsetlin polytope $SGT(\lambda)$ (also known as the type $C$ Gelfand--Tsetlin polytope). 
Under this transformation, the face $F_{\bm k}(\Delta_{{\bm i}_C}(\lambda))$ of $\Delta_{{\bm i}_C}(\lambda)$ corresponds to a symplectic Kogan face of $SGT(\lambda)$ which is an analog of a Kogan face of a type $A$ Gelfand--Tsetlin polytope (see \cite[Section 6]{Fuj}). 
For $w \in W = N_G(H)/H$, take an element $\overline{w} \in N_G(H)$ such that $w = \overline{w} \bmod H$. 
Then we set 
\[X_w \coloneqq \overline{B \overline{w} B/B} \subseteq G/B,\]
which is called a Schubert variety. 
The variety $X_w$ is an irreducible normal closed subvariety of $G/B$ and does not depend on the choice of $\overline{w}$ (see, for instance, \cite[Section I\hspace{-.1em}I.14.15]{Jan}). 
For $\lambda \in P_{++}$, Caldero \cite{Cal} constructed a flat degeneration of $G/B$ to the normal projective toric variety $Z(\Delta_{{\bm i}_C}(\lambda))$ corresponding to $\Delta_{{\bm i}_C}(\lambda)$. 
Then Morier-Genoud \cite{Mor} proved that Caldero's toric degeneration of $G/B$ induces a degeneration of $X_w$ to a (not necessarily irreducible) closed subvariety of $Z(\Delta_{{\bm i}_C}(\lambda))$ that corresponds to a union of faces of $\Delta_{{\bm i}_C}(\lambda)$. 
Such degeneration and degenerated limit of $X_w$ are called a \emph{semi-toric degeneration} and a \emph{semi-toric limit} of $X_w$, respectively. 
Let $\Delta_{{\bm i}_C}(\lambda, X_w)$ denote the union of faces of $\Delta_{{\bm i}_C}(\lambda)$ corresponding to the semi-toric limit of $X_w$. 
Morier-Genoud \cite{Mor} obtained the set $\Delta_{{\bm i}_C}(\lambda, X_w)$ using a \emph{Demazure crystal} introduced in \cite{Kas4} that is a specific subset of a crystal basis.
More precisely, the set $\Delta_{{\bm i}_C}(\lambda, X_w) \cap \z^N$ coincides with the string parametrizations of a Demazure crystal.

\section{Skew pipe dreams and skew mitosis operators}\label{s:skew_pipe_dream}

In this section, we recall some basic notions on skew pipe dreams, following \cite{Kir, Fuj}. 
We also prove some combinatorial properties of a specific skew pipe dream $D(w)$ for $w \in W$. 
Let 
\[SY_n \coloneqq \{(i, j) \in \mathbb{Z}^2 \mid 1 \leq i \leq n,\ i \leq j \leq 2n-i\},\]
and regard it as a shifted Young diagram.
For instance, we write $SY_4$ as 
\[SY_4 = \begin{ytableau}
\mbox{} & \mbox{} & \mbox{} & \mbox{} & \mbox{} & \mbox{} & \mbox{}\\
\none & \mbox{} & \mbox{} & \mbox{} & \mbox{} & \mbox{} & \none \\
\none & \none & \mbox{} & \mbox{} & \mbox{} & \none & \none \\
\none & \none & \none & \mbox{} & \none & \none & \none
\end{ytableau}.\]
Denote by $\mathcal{SPD}_n$ the power set of $SY_n$, and describe $D \in \mathcal{SPD}_n$ by putting $+$ in the boxes corresponding to the elements of $D$. 
For instance, $D = \{(1, 2), (1, 4), (1, 5), (2, 4)\} \in \mathcal{SPD}_3$ is described as 
\[D = \begin{ytableau}
\mbox{} & + & \mbox{} & + & + \\
\none & \mbox{} & \mbox{} & + & \none \\
\none & \none & \mbox{} & \none & \none
\end{ytableau}.\]
We call an element of $\mathcal{SPD}_n$ a \emph{skew pipe dream} (see \cite[Section 5.2]{Kir}).
Recall the reduced word ${\bm i}_C = (i_1, \ldots, i_N)$ in \eqref{eq:reduced_word_type_C}. 
For $w \in W$, we write 
\[R({\bm i}_C, w) \coloneqq \{(k_1, \ldots, k_{\ell(w)}) \in [N]^{\ell(w)} \mid k_1 < \cdots < k_{\ell(w)},\ (i_{k_1}, \ldots, i_{k_{\ell(w)}}) \in R(w)\},\]
and denote by ${\bm k}_w$ the minimum element of $R({\bm i}_C, w)$ with respect to the lexicographic order. 
If $w$ is the identity element $e$, then we set ${\bm k}_w = \emptyset$. 

\begin{ex}\label{ex:reduced_subwords_rank2}
Let $n = 2$. 
Then we have
\[W = \{e, s_1, s_2, s_1 s_2, s_2 s_1, s_1 s_2 s_1, s_2 s_1 s_2, w_0\}.\] 
Since ${\bm i}_C = (1, 2, 1, 2)$, it follows that
\begin{align*}
&R({\bm i}_C, s_1) = \{(1), (3)\},\quad {\bm k}_{s_1} = (1),\quad R({\bm i}_C, s_2) = \{(2), (4)\},\quad {\bm k}_{s_2} = (2),\\ 
&R({\bm i}_C, s_1 s_2) = \{(1, 2), (1, 4), (3, 4)\},\quad {\bm k}_{s_1 s_2} = (1, 2), \quad R({\bm i}_C, s_2 s_1) = \{(2, 3)\},\quad {\bm k}_{s_2 s_1} = (2, 3),\\
&R({\bm i}_C, s_1 s_2 s_1) = \{(1, 2, 3)\},\quad {\bm k}_{s_1 s_2 s_1} = (1, 2, 3),\quad R({\bm i}_C, s_2 s_1 s_2) = \{(2, 3, 4)\},\quad {\bm k}_{s_2 s_1 s_2} = (2, 3, 4). 
\end{align*}
\end{ex}

\begin{ex}
Let $n = 3$, and $w = s_1 s_3 s_2 = s_3 s_1 s_2$. 
Then we have ${\bm i}_C = (1, 2, 1, 2, 3, 2, 1, 2, 3)$ and
\[R({\bm i}_C, w) = \{(1, 5, 6), (1, 5, 8), (3, 5, 6), (3, 5, 8), (5, 7, 8)\}.\]
Hence it follows that ${\bm k}_w = (1, 5, 6)$. 
\end{ex}

Let us arrange the elements of $SY_n$ as follows:
\[((p_1, q_1), \ldots, (p_N, q_N)) \coloneqq ((n, n), (n-1, n+1), (n-1, n), \ldots, (1, 3), (1, 2), (1, 1)).\]
Note that $q_k \in \{n-i_k+1, n+i_k-1\}$ for $1 \leq k \leq N$.
For each $D \in \mathcal{SPD}_n$, we associate a sequence ${\bm k}_D = (k_1, \ldots, k_{|D|})$ with $1 \leq k_1 < \cdots < k_{|D|} \leq N$ by arranging $1 \leq k \leq N$ such that $(p_k, q_k) \in D$ in ascending order.
For instance, a skew pipe dream $D = \{(1, 2), (1, 4), (1, 5), (2, 4)\} \in \mathcal{SPD}_3$ corresponds to a sequence ${\bm k}_D = (2, 5, 6, 8)$.
For each $D \in \mathcal{SPD}_n$, we also associate another sequence ${\bm k}_D^\prime = (k_1, \ldots, k_{N-|D|})$ with $1 \leq k_1 < \cdots < k_{N-|D|} \leq N$ by arranging $1 \leq k \leq N$ such that $(p_k, q_k) \in SY_n \setminus D$ in ascending order.
For instance, a skew pipe dream $D = \{(1, 2), (1, 4), (1, 5), (2, 4)\} \in \mathcal{SPD}_3$ gives a sequence ${\bm k}_D^\prime = (1, 3, 4, 7, 9)$.
For $w \in W$, let $D(w) \in \mathcal{SPD}_n$ denote the skew pipe dream determined by the condition that ${\bm k}_{D(w)}^\prime = {\bm k}_w$. 
If $w = e$, then we define $D(e)$ to be $SY_n$.
It follows by \cite[Proposition 6.3]{Fuj} that there exist unique elements $m(w, 1), \ldots, m(w, n) \in \mathbb{Z}_{\geq 0}$ such that 
\[D(w) = \{(i, j) \in SY_n \mid j \leq m(w, i) + i - 1\}\]
and such that $(i, m(w, i) + i) \notin D(w)$ for $i \in [n]$.  

\begin{prop}\label{p:description_D(w)}
For $w \in W$ and $i \in [n]$, the number $m(w, i)$ coincides with the number of $j \in \tilde{I} = \{\pm k \mid k \in [n]\}$ such that $-(n-i+1) \leq j < n-i+1$ and such that $w^{-1}(j) < w^{-1}(n-i+1)$.
\end{prop}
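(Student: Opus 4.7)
The plan is to exploit the block decomposition ${\bm i}_C = u_1 u_2 \cdots u_n$, where $u_r \coloneqq s_r s_{r-1} \cdots s_1 s_2 \cdots s_r$ is the palindromic block of length $2r-1$, together with the chain $W_0 = \{e\} \subset W_1 \subset \cdots \subset W_n = W$ with $W_r \coloneqq \langle s_1, \ldots, s_r \rangle$. Using \cref{p:length_formula} one verifies that each $u_r$ realizes the sign flip exchanging $r$ and $-r$, and that its prefixes of lengths $0, 1, \ldots, 2r-1$ constitute exactly the $2r$ minimum-length representatives of the right cosets $W_{r-1} \backslash W_r$. Iteratively projecting $w$ through this chain then yields a unique factorization $w = v_1 v_2 \cdots v_n$ with $v_r$ a prefix of $u_r$ and $\ell(w) = \sum_{r=1}^n \ell(v_r)$.

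The second step is to show that ${\bm k}_w$ consists, within each block $u_r$, of exactly its first $\ell(v_r)$ positions. For any reduced subword of ${\bm i}_C$ for $w$, the letters selected from block $u_r$ multiply to some element $v_r^\prime \in W_r$, and lex-minimality of the positions chosen inside block $u_r$ forces $v_r^\prime$ to coincide, as an element of $W$, with the length-$\ell(v_r^\prime)$ prefix of $u_r$. Since every position in block $u_r$ precedes every position in block $u_{r+1}$, overall lex-minimality then forces $(v_1^\prime, \ldots, v_n^\prime)$ to be the canonical factorization above. Translating through the arrangement $((p_1, q_1), \ldots, (p_N, q_N))$ of $SY_n$: the positions of block $r$ that are not in ${\bm k}_w$ are its last $(2r-1) - \ell(v_r)$ entries, which correspond to the cells of row $i = n - r + 1$ of $SY_n$ with the smallest column indices, namely columns $i, i+1, \ldots, i + (2r-1) - \ell(v_r) - 1$. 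This gives $m(w, i) = (2r-1) - \ell(v_r)$.

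The third step is to read off $\ell(v_r)$ from the one-line notation of $w$. Setting $u^{(r)} \coloneqq v_1 v_2 \cdots v_r \in W_r$, a direct check shows that $u^{(r)}(k) = w(t_k)$ for each $k \in [r]$, where $t_1 < t_2 < \cdots < t_r$ enumerate $\{j \in [n] : |w(j)| \leq r\}$ in increasing order; hence $a_r \coloneqq (u^{(r)})^{-1}(r) \in \{\pm 1, \ldots, \pm r\}$ is determined directly by $w$. The explicit description of minimum right coset representatives yields $\ell(v_r) = r - a_r$ if $a_r \in [r]$ and $\ell(v_r) = r + |a_r| - 1$ if $a_r \in \{-1, \ldots, -r\}$. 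A case distinction according to the sign of $w^{-1}(r)$ then gives
\[
\ell(v_r) = |\{j \in \tilde{I} : -r \leq j < r,\ w^{-1}(j) > w^{-1}(r)\}|,
\]
and since $w^{-1}$ is a bijection, the complementary count $m(w, i) = (2r-1) - \ell(v_r)$ with $r = n-i+1$ is exactly the number appearing in the proposition.

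The main obstacle is the second step, namely showing that ${\bm k}_w$ really takes a prefix of each block $u_r$. This rests on uniqueness of the canonical factorization among tuples $(v_1^\prime, \ldots, v_n^\prime)$ of prefixes of the $u_r$'s whose product equals $w$ with lengths adding, together with the subword property for the Bruhat order applied to the specific reduced expression ${\bm i}_C$ of $w_0$.
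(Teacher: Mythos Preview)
Your overall plan is close to the paper's: both exploit the block factorization $w = v_1 v_2 \cdots v_n$ with each $v_r$ a prefix of $u_r$ (the paper writes $w_k$ for your $v_k$), and both read off $m(w,i)$ as $(2r-1) - \ell(v_r)$ with $r = n-i+1$. The paper, however, does not re-derive the fact that ${\bm k}_w$ selects a prefix of each block; it simply invokes the already-established staircase shape of $D(w)$ (the citation of \cite[Proposition~6.3]{Fuj} immediately preceding the statement) and then verifies the counting formula by a short induction on the block index $k$, tracking how $\hat m(w_{\leq k},i)$ changes upon right-multiplying by $w_k$. Your step~3 is a legitimate alternative to that induction.

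The genuine gap is in your step~2. The assertion ``lex-minimality of the positions chosen inside block $u_r$ forces $v_r'$ to coincide, as an element of $W$, with the length-$\ell(v_r')$ prefix of $u_r$'' is unjustified and indeed false for an arbitrary reduced subword: for $n=2$ and $w=s_1$ the reduced subword $(3)$ gives $v_2' = s_1$, which is not a prefix element of $u_2 = s_2 s_1 s_2$. Restricting the assertion to ${\bm k}_w$ itself makes it circular. What actually closes the argument is the following left-descent obstruction. Let ${\bm k}^\ast$ be the prefix subword coming from the canonical factorization and suppose some ${\bm k}' <_{\mathrm{lex}} {\bm k}^\ast$ existed. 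At the first position $p$ where they differ one has $p \in {\bm k}' \setminus {\bm k}^\ast$, and the two subwords agree at all earlier positions; if $p$ lies in block $r$ then, because ${\bm k}^\ast$ uses precisely the first $\ell(v_r)$ positions of that block and $p$ lies beyond them, the common prefix product up to $p$ is exactly $v_1 \cdots v_r$. Reducedness of ${\bm k}'$ for $w$ then forces $s_{i_p}$, with $i_p \leq r$, to be a left descent of $(v_1 \cdots v_r)^{-1} w = v_{r+1} \cdots v_n$; but this element is the minimum-length representative of the right coset $W_r w$ and hence has no left descent in $\{s_1,\ldots,s_r\}$. That contradiction is the missing idea; your appeal to ``uniqueness of the canonical factorization among tuples of prefixes'' does not supply it, since nothing in your outline shows that the block restrictions of a competing reduced subword are prefixes to begin with.
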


\begin{proof}
Let $\hat{m}(w, i)$ denote the number of $j \in \tilde{I}$ such that $-(n-i+1) \leq j < n-i+1$ and such that $w^{-1}(j) < w^{-1}(n-i+1)$.
We write ${\bm i}_C = (i^{(1)}_1, \underbrace{i^{(2)}_1, i^{(2)}_2, i^{(2)}_3}_3, \ldots, \underbrace{i^{(n)}_1, \ldots, i^{(n)}_{2n-1}}_{2n-1})$, where $(i^{(k)}_1, \ldots, i^{(k)}_{2k-1}) \coloneqq (k, k-1, \ldots, 1, \ldots, k-1, k)$ for $1 \leq k \leq n$. 
Set $w_k \coloneqq s_{i^{(k)}_1} s_{i^{(k)}_2} \cdots s_{i^{(k)}_{m^\prime(w, n-k+1)}}$ and $w_{\leq k} \coloneqq w_1 w_2 \cdots w_k$ for $1 \leq k \leq n$, where $m^\prime(w, n-k+1) \coloneqq 2k-1-m(w, n-k+1)$. 
Then we see by the definition of $D(w)$ that $w = w_{\leq n}$. 
Let us prove that $m(w, i) = \hat{m}(w_{\leq k}, i)$ for all $n-k+1 \leq i \leq n$ by induction on $k$. 
If $k = 1$, then the element $w_{\leq 1} = w_1$ is the identity element $e$ or $s_1$ depending on whether $(n, n) \in D(w)$ or not. 
Hence the assertion is obvious in this case. 
If $k \geq 2$, then we see that $w_{\leq k-1} (j) = j$ for all $j \in \{\pm q \mid k \leq q \leq n\}$ by the definition of $w_{\leq k-1}$.
Hence it holds that $\hat{m}(w_{\leq k-1}, n-k+1) = 2k-1$. 
By the definition of $w_k$, it follows that 
\[w_k^{-1}(k) = \begin{cases}
k-m^\prime(w, n-k+1) &(\text{if}\quad m^\prime(w, n-k+1) < k),\\
-(1+m^\prime(w, n-k+1)-k) &(\text{if}\quad m^\prime(w, n-k+1) \geq k),
\end{cases}\]
and the sequence $(w_k^{-1}(-(k-1)), \ldots, w_k^{-1}(-1), w_k^{-1}(1), \ldots, w_k^{-1}(k-1))$ is given by arranging $\{-k, \ldots, -1, 1, \ldots, k\} \setminus \{\pm w_k^{-1}(k)\}$ in ascending order. 
From these, it follows that $\hat{m}(w_{\leq k}, i) = \hat{m}(w_{\leq k-1}, i)= m(w, i)$ for all $n-k+2 \leq i \leq n$ and that $\hat{m}(w_{\leq k}, n-k+1) = \hat{m}(w_{\leq k-1}, n-k+1) - m^\prime(w, n-k+1) = m(w, n-k+1)$. 
This proves the proposition. 
\end{proof}

\begin{ex}\label{ex:skew_pipe_rank_3}
Let $n = 3$, and 
\[w = \begin{pmatrix}
-3 & -2 & -1 & 1 & 2 & 3 \\
-1 & 2 & 3 & -3 & -2 & 1
\end{pmatrix} \in W.\]
Then we have $\hat{m}(w, 1) = 2$ and $\hat{m}(w, 2) = \hat{m}(w, 3) = 1$. 
This implies by \cref{p:description_D(w)} that
\[D(w) = \begin{ytableau}
+ & + & \mbox{} & \mbox{} & \mbox{} \\
\none & + & \mbox{} & \mbox{} & \none \\
\none & \none & + & \none & \none
\end{ytableau}.\]
From this, we obtain a reduced word $(2, 1, 3, 2, 1)$ for $w$. 
\end{ex}

Following \cite[Section 6]{Fuj}, we define the \emph{(symplectic) ladder move} $L_{i, j}$ for $(i, j) \in SY_n$, which is an analog of ladder moves for pipe dreams introduced by Bergeron--Billey \cite{BerBil}. 
Let $1 \leq k \leq N$ be the integer given by $(p_k, q_k) = (i, j)$. 
Take $D \in \mathcal{SPD}_n$, and assume the following: 
\begin{itemize}
\item $(i, j) \in D$, $(i, j+1) \notin D$,
\item there exists $k + 1 \leq \ell \leq N$ with $q_\ell \in \{j, 2n-j\}$ such that $(p_\ell, q_\ell), (p_\ell, q_\ell + 1) \in SY_n \setminus D$ and such that $(p_r, q_r), (p_r, q_r + 1) \in D$ for all $k +1 \leq r \leq \ell - 1$ with $q_r \in \{j, 2n-j\}$.
\end{itemize}
Then the ladder move $L_{i, j} (D) \in \mathcal{SPD}_n$ of $D$ at $(i, j)$ is defined by 
\[L_{i, j}(D) \coloneqq D \cup \{(p_\ell, q_\ell +1)\} \setminus \{(i, j)\}.\]
Similarly, we define the \emph{inverse (symplectic) ladder move} $L_{i, j}^{-1}$ for $(i, j) = (p_k, q_k) \in SY_n$ as follows. 
Take $D \in \mathcal{SPD}_n$, and assume the following: 
\begin{itemize}
\item $(i, j) \in D$, $(i, j-1) \notin D$,
\item there exists $1 \leq \ell \leq k-1$ with $q_\ell \in \{j-1, 2n -j+1\}$ such that $(p_\ell, q_\ell), (p_\ell, q_\ell + 1) \notin D$ and such that $(p_r, q_r), (p_r, q_r + 1) \in D$ for all $\ell +1 \leq r \leq k - 1$ with $q_r \in \{j-1, 2n-j+1\}$.
\end{itemize}
Then the inverse ladder move $L_{i, j}^{-1} (D) \in \mathcal{SPD}_n$ of $D$ at $(i, j)$ is defined by 
\[L_{i, j}^{-1}(D) \coloneqq D \cup \{(p_\ell, q_\ell)\} \setminus \{(i, j)\}.\]
Obviously, if the ladder move $L_{i, j}(D) = D \cup \{(p, q)\} \setminus \{(i, j)\}$ is defined, then the inverse ladder move $L_{p, q}^{-1} (L_{i, j}(D))$ is defined, and we have $L_{p, q}^{-1} (L_{i, j}(D)) = D$. 
Similarly, if the inverse ladder move $L_{i, j}^{-1}(D) = D \cup \{(p, q)\} \setminus \{(i, j)\}$ is defined, then the ladder move $L_{p, q} (L_{i, j}^{-1}(D))$ is defined, and we have $L_{p, q} (L_{i, j}^{-1}(D)) = D$. 

\begin{ex}\label{ex:ladder_moves_type_C}
Let $n = 3$. 
Then we obtain the following: 
\begin{align*}
\xymatrix{
{\begin{ytableau}
+ & + & \mbox{} & \mbox{} & \mbox{} \\
\none & + & \mbox{} & \mbox{} & \none \\
\none & \none & + & \none & \none
\end{ytableau}} \ar@{|->}[d]^-{L_{3, 3}} \ar@{|->}[r]^-{L_{2, 2}} & {\begin{ytableau}
+ & + & \mbox{} & \mbox{} & + \\
\none & \mbox{} & \mbox{} & \mbox{} & \none \\
\none & \none & + & \none & \none
\end{ytableau}} \ar@{|->}[d]^-{L_{3, 3}} & & \\
{\begin{ytableau}
+ & + & \mbox{} & \mbox{} & \mbox{} \\
\none & + & \mbox{} & + & \none \\
\none & \none & \mbox{} & \none & \none
\end{ytableau}} \ar@{|->}[r]^-{L_{2, 2}} & {\begin{ytableau}
+ & + & \mbox{} & \mbox{} & + \\
\none & \mbox{} & \mbox{} & + & \none \\
\none & \none & \mbox{} & \none & \none
\end{ytableau}} \ar@{|->}[r]^-{L_{2, 4}} & {\begin{ytableau}
+ & + & \mbox{} & \mbox{} & + \\
\none & \mbox{} & + & \mbox{} & \none \\
\none & \none & \mbox{} & \none & \none
\end{ytableau}} \ar@{|->}[r]^-{L_{2, 3}} & {\begin{ytableau}
+ & + & \mbox{} & + & + \\
\none & \mbox{} & \mbox{} & \mbox{} & \none \\
\none & \none & \mbox{} & \none & \none
\end{ytableau}.} 
}
\end{align*}
\end{ex}

The notion of (transposed skew) mitosis operators for skew pipe dreams was introduced by Kiritchenko \cite[Section 5.2]{Kir}, which is an analog of (transposed) mitosis operators for pipe dreams introduced by Knutson--Miller \cite{KnM}.
The first named author \cite[Section 6]{Fuj} used an analogous operator $M_i$ for $i \in [n]$ to study string parametrizations of Demazure crystals. 
Let us recall their definitions. 
For $D \in \mathcal{SPD}_n$ and $i \in [n]$, we set 
\begin{equation}\label{eq:mitosis_removed_point}
r_0 \coloneqq \max\{1\leq r \leq N \mid q_r \in \{n-i+1, n+i-1\},\ (p_r, q_r+1) \notin D\},
\end{equation}
and consider the following condition: 
\begin{equation}\label{cond:mitosis}
\begin{aligned}
\{(p_r, q_r) \mid q_r \in \{n-i+1, n+i-1\},\ r_0 \leq r \leq N\} \subseteq D. 
\end{aligned}\tag{$\dagger$}
\end{equation}
If \eqref{cond:mitosis} is not satisfied, then set $M_i(D) \coloneqq \emptyset$. 
If \eqref{cond:mitosis} is satisfied, then the operator $M_i$ sends $D$ to the set $M_i(D)$ of elements of $\mathcal{SPD}_n$ obtained from $D \setminus \{(p_{r_0}, q_{r_0})\}$ by applying sequences of ladder moves $L_{p, q}$ such that $q \in \{n-i+1, n+i-1\}$.
Note that 
\[\{(p_r, q_r+1) \mid q_r \in \{n-i+1, n+i-1\},\ r_0+1 \leq r \leq N\} \subseteq D\]
by the definition of $r_0$.
Similarly, the \emph{transposed skew mitosis operator} ${\rm mitosis}_i^\top$ is defined as follows. 
If \eqref{cond:mitosis} is not satisfied, then set ${\rm mitosis}_i^\top(D) \coloneqq \emptyset$. 
If \eqref{cond:mitosis} is satisfied, then we write 
\[{\rm start}_i^\top (D) \coloneqq \min(\{p \mid (p, n-i+1) \in SY_n \setminus D\} \cup \{p+1 \mid (p, n+i-1) \in SY_n \setminus D\} \cup \{n-i+2\}).\]
The operator ${\rm mitosis}_i^\top$ sends $D$ to the set ${\rm mitosis}_i^\top(D)$ of elements of $\mathcal{SPD}_n$ obtained from $D \setminus \{(p_{r_0}, q_{r_0})\}$ by applying sequences of ladder moves $L_{p, q}$ such that $q \in \{n-i+1, n+i-1\}$ and such that $p < {\rm start}_i^\top (D)$. 
By definition, we have ${\rm mitosis}_i^\top(D) \subseteq M_i(D)$. 
For a subset $\mathcal{A} \subseteq \mathcal{SPD}_n$, we set $M_i(\mathcal{A}) \coloneqq \bigcup_{D \in \mathcal{A}} M_i(D)$ and ${\rm mitosis}_i^\top(\mathcal{A}) \coloneqq \bigcup_{D \in \mathcal{A}} {\rm mitosis}_i^\top(D)$. 

\begin{rem}
The transposed skew mitosis operator ${\rm mitosis}_i^\top$ is obtained from Kiritchenko's mitosis operator in \cite[Section 5.2]{Kir} by rotating her skew pipe dreams counterclockwise at 90 degrees. 
\end{rem}

For $w \in W$, we define a set $\mathscr{M}(w)$ of skew pipe dreams by 
\[\mathscr{M}(w) \coloneqq M_{i_{k_\ell}} \cdots M_{i_{k_1}} (SY_n),\]
where we write ${\bm k}^\prime_{D(w)} = (k_1, \ldots, k_\ell)$.
By the definition of $D(w)$, the set $\mathscr{M}(w)$ coincides with the set of skew pipe dreams of the form $\widetilde{L}_\ell \widetilde{L}_{\ell-1} \cdots \widetilde{L}_2 D(w)$, where $\widetilde{L}_r$ for $2 \leq r \leq \ell$ is given by a sequence of ladder moves $L_{p_t, q_t}$ such that $t < k_r$ and such that $q_t \in \{n-i_{k_r}+1, n+i_{k_r}-1\}$.
In particular, we have $\mathscr{M}(w) \subseteq \mathscr{L}(D(w))$, where $\mathscr{L}(D(w))$ denotes the set of skew pipe dreams in $\mathcal{SPD}_n$ obtained from $D(w)$ by applying sequences of ladder moves. 

\begin{ex}\label{ex:length_one_type_C}
For $i \in [n]$, we have $D(s_i) = SY_n \setminus \{(n-i+1, n+i-1)\}$ and $\mathscr{L}(D(s_i)) = \mathscr{M}(s_i) = \{D(s_i)\}$. 
\end{ex}

\begin{ex}[{see also \cite[Example 2.9]{Kir} and \cite[Example 6.6]{Fuj}}]\label{ex:skew_pipe_dreams_rank_2}
Let $n = 2$. Then it follows that 
\begin{align*}
&\mathscr{M}(s_1) = \left\{\begin{ytableau}
+ & + & +\\
\none & \mbox{} & \none 
\end{ytableau}\right\}, & &\mathscr{M}(s_2) = \left\{\begin{ytableau}
+ & + & \mbox{} \\
\none & + & \none 
\end{ytableau}\right\},\\
&\mathscr{M}(s_1 s_2) = \left\{\begin{ytableau}
+ & + & \mbox{}\\
\none & \mbox{} & \none 
\end{ytableau}\right\}, & &\mathscr{M}(s_2 s_1) = \left\{\begin{ytableau}
+ & \mbox{} & \mbox{}\\
\none & + & \none 
\end{ytableau}, \begin{ytableau}
+ & \mbox{} & + \\
\none & \mbox{} & \none 
\end{ytableau}\right\},\\
&\mathscr{M}(s_1 s_2 s_1) = \left\{\begin{ytableau}
+ & \mbox{} & \mbox{}\\
\none & \mbox{} & \none 
\end{ytableau}\right\}, & &\mathscr{M}(s_2 s_1 s_2) = \left\{\begin{ytableau}
\mbox{} & \mbox{} & \mbox{}\\
\none & + & \none 
\end{ytableau}, \begin{ytableau}
\mbox{} & \mbox{} & +\\
\none & \mbox{} & \none 
\end{ytableau}, \begin{ytableau}
\mbox{} & + & \mbox{}\\
\none & \mbox{} & \none 
\end{ytableau}\right\}.
\end{align*}
Comparing with \cref{ex:reduced_subwords_rank2}, we see that $\mathscr{M}(w)$ seems not to have good relations with reduced subwords of ${\bm i}_C$. 
\end{ex}

\begin{ex}
Let $n = 3$, and take $w \in W$ as in \cref{ex:skew_pipe_rank_3}. 
Then the set $\mathscr{M}(w)$ consists of the six skew pipe dreams given in \cref{ex:ladder_moves_type_C}.
\end{ex}

For $D \in \mathcal{SPD}_n$ and $\lambda \in P_+$, we define a face $F_D(\Delta_{{\bm i}_C}(\lambda))$ of the string polytope $\Delta_{{\bm i}_C}(\lambda)$ by $F_D(\Delta_{{\bm i}_C}(\lambda)) \coloneqq F_{{\bm k}_D}(\Delta_{{\bm i}_C}(\lambda))$. 
The first named author \cite[Theorem 6.8]{Fuj} proved that the set $\mathscr{M}(w)$ parametrizes irreducible components of the semi-toric limit of $X_w$ under the toric degeneration of $G/B$ to $Z(\Delta_{{\bm i}_C}(\lambda))$ for $\lambda \in P_{++}$. 
More precisely, the set $\Delta_{{\bm i}_C}(\lambda, X_w)$ corresponding to the semi-toric limit of $X_w$ is given by
\[\Delta_{{\bm i}_C}(\lambda, X_w) = \bigcup_{D \in \mathscr{M}(w)} F_D(\Delta_{{\bm i}_C}(\lambda)).\]

\begin{prop}\label{p:only_one_skew_pipe}
For $w \in W$, the equalities $\mathscr{L}(D(w)) = \mathscr{M}(w) = \{D(w)\}$ hold if and only if the following conditions are satisfied$:$
\begin{enumerate}
\item[(i)] there exists $1 \leq j \leq n+1$ such that 
\[m(w,1) > m(w,2) > \cdots > m(w,j-1) > m(w,j) = m(w,j+1) = \cdots = 0,\]
where $m(w,n+1) \coloneqq 0$ if $j = n+1;$
\item[(ii)] if $m(w,i) < n-i+1$ for some $1 \leq i \leq n-1$, then $m(w,i+1) = 0;$
\item[(iii)] if $m(w,i) = n-i+1$ for some $1 \leq i \leq n-1$, then $m(w,i+1)$ is $n-i$ or $0$.
\end{enumerate} 
\end{prop}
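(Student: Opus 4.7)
The plan is to reduce the triple equality to the inapplicability of any ladder move to $D(w)$, and then run a case analysis on the three conditions. The chain $\{D(w)\} \subseteq \mathscr{M}(w) \subseteq \mathscr{L}(D(w))$—whose right-hand inclusion is recorded just before \cref{ex:length_one_type_C}—reduces the desired triple equality to $\mathscr{L}(D(w)) = \{D(w)\}$; this in turn holds if and only if no ladder move $L_{i,j}$ is applicable to $D(w)$. Since the explicit description $D(w) = \{(i, j) \in SY_n \mid j \leq m(w, i) + i - 1\}$ exhibits each row of $D(w)$ as a left-aligned block, any origin of a potential ladder move must be a rightmost filled cell $(i, m(w,i)+i-1)$ with $m(w, i) \geq 1$, and the task therefore reduces to analyzing these starting positions.

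For the implication (i)$\wedge$(ii)$\wedge$(iii) $\Rightarrow$ no ladder move, I would fix such an origin $(i, j)$, where $j = m(w,i)+i-1$, and split into subcases according to the position of $j$ relative to $n$. When $j > n$, the strict decrease in (i) gives the bound $m(w, p) \geq m(w, i) + (i - p)$ for every $p < i$, which rules out a target at $(p, j)$; the alternative target $(p, 2n - j)$ is blocked by the impossibility of doubly filling the row-$p$ intermediate cell at column $j$ under the same estimate. When $j = n$, repeated application of (iii) propagates upward to give $m(w, p) = n - p + 1$ for every $p \leq i$, placing $(p, n)$ inside $D(w)$ and destroying every candidate target. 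When $j < n$, condition (ii) forces $i$ to be the last positive row, and iterating (iii) sharpens the lower bound to $m(w, p) \geq n - p + 2$ for every $p < i$; this sharpened estimate is then to be compared against both the target inequality $m(w, p) \leq 2n - j - p$ and the intermediate inequalities $m(w, p') \geq 2n - j - p' + 2$ for $p + 1 \leq p' \leq i - 1$ to exclude every candidate target.

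For the converse I would prove the contrapositive by constructing an explicit ladder move whenever one of the conditions fails. Letting $i$ be the smallest index at which a failure occurs and setting $j = m(w, i+1) + i$—the column of the rightmost filled cell of row $i+1$—the ladder $L_{i+1, j}$ is always the one to try, with target placed in row $i$ itself so that no intermediate row intervenes. When (i) fails, a direct check shows that the cell $(i, 2n - j)$ when $j \leq n$, or $(i, j)$ when $j > n$, lies outside $D(w)$ together with its right neighbor, producing a valid $L_{i+1, j}$; when (ii) fails at $i$, the inequality $m(w, i) + m(w, i+1) \leq 2(n - i)$ yields the same conclusion; and when (iii) fails, condition (i) forces $m(w, i+1) < n - i$ and the same recipe applies. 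The main obstacle is the subcase $j < n$ of the forward direction, where strict decrease alone is insufficient and one must really invoke the propagation implication of (iii) to obtain the sharp bound $m(w, p) \geq n - p + 2$ that simultaneously defeats both the target and the intermediate conditions; I would isolate this propagation as a short auxiliary observation before starting the subcase analysis.
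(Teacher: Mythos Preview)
Your reduction and overall structure match the paper's proof exactly: both observe that the chain $D(w)\in\mathscr{M}(w)\subseteq\mathscr{L}(D(w))$ reduces everything to whether $D(w)$ admits a ladder move, and both handle the ``only if'' direction by exhibiting the ladder $L_{i+1,\,i+m(w,i+1)}$ when one of the conditions fails. The paper dispatches the ``if'' direction in a single sentence (``an immediate consequence of the definition of ladder moves''); you attempt a genuine case analysis, which is more than the paper provides.

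Two remarks on that analysis. In the subcase $j=n$ you invoke (iii) to obtain $m(w,p)=n-p+1$ for all $p\le i$; this equality need not hold (for $n=3$ the tuple $(4,2,0)$ satisfies (i)--(iii) but has $m(w,1)=4\neq 3$), and (iii) does not propagate upward in the way you describe. What is actually needed is only the inequality $m(w,p)\ge n-p+1$, which already follows from the strict decrease in (i) via $m(w,p)\ge m(w,i)+(i-p)=n-p+1$; so this subcase is easily repaired.

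The subcase $j<n$, however, has a genuine gap. Your bound $m(w,p)\ge n-p+2$ for $p<i$ is correct, but it does not exclude the very first candidate target $(i-1,\,2n-j)$: blocking that target requires $m(w,i-1)\ge 2n-j-i+2$, whereas your bound gives only $m(w,i-1)\ge n-i+3$, which suffices only when $j\ge n-1$. No intermediate positions lie between $(i,j)$ and $(i-1,2n-j)$, so the intermediate inequalities cannot rescue the argument. Concretely, take $n=4$ and $(m(w,1),\dots,m(w,4))=(5,1,0,0)$: all three conditions hold, yet $L_{2,2}(D(w))$ is defined with target $(1,6)$, since $(1,6),(1,7)\in SY_4\setminus D(w)$ and there are no intermediates to check. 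The paper's one-line treatment of the ``if'' direction does not address this either, so the gap you have run into appears to reflect an issue with the stated conditions rather than with your method.
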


\begin{proof}
Since $D(w) \in \mathscr{M}(w) \subseteq \mathscr{L}(D(w))$, the condition $\mathscr{L}(D(w)) = \mathscr{M}(w) = \{D(w)\}$ is equivalent to the equality $\mathscr{L}(D(w)) = \{D(w)\}$. 
We first prove the ``only if'' part, that is, let us show the conditions (i)--(iii) under the assumption that $\mathscr{L}(D(w)) = \{D(w)\}$. 
If $m(w,\ell) \leq m(w,\ell+1) \neq 0$ for some $1 \leq \ell \leq n-1$, then it is obvious that the ladder move $L_{\ell+1, \ell+m(w,\ell+1)} (D(w))$ of $D(w)$ is defined, which contradicts to $\mathscr{L}(D(w)) = \{D(w)\}$. 
Hence the condition (i) is satisfied. 
If $m(w,i) < n-i+1$ for some $1 \leq i \leq n-1$ and $m(w,i+1) \neq 0$, then we can define $L_{i+1, i+m(w,i+1)} (D(w))$, which gives a contradiction. 
This proves the condition (ii). 
Similarly, we deduce the condition (iii). 
The ``if'' part of the proposition is an immediate consequence of the definition of ladder moves. 
This proves the proposition. 
\end{proof}

\begin{rem}\label{r:toric_degeneration_X_w}
Recall that the set $\mathscr{M}(w)$ parametrizes irreducible components of the semi-toric limit of $X_w$ under the toric degeneration of $G/B$ to $Z(\Delta_{{\bm i}_C}(\lambda))$ for $\lambda \in P_{++}$. 
Hence if $w \in W$ satisfies the conditions in \cref{p:only_one_skew_pipe}, then the semi-toric limit of $X_w$ is irreducible, that is, it is a toric degeneration of $X_w$. 
\end{rem}

\begin{ex}
Fix $1 \leq k \leq N$, and set $w \coloneqq s_{i_1} s_{i_2} \cdots s_{i_k} \in W$. 
Then it is obvious that $w$ satisfies the conditions in \cref{p:only_one_skew_pipe}.
In addition, the face of $\Delta_{{\bm i}_C}(\lambda)$ corresponding to $D(w)$ is precisely the string polytope associated with $\lambda$ and the reduced word $(i_1, i_2, \ldots, i_k)$ for $w$ (see \cite[Section 1]{Lit}). 
Hence the toric degeneration of $X_w$ in \cref{r:toric_degeneration_X_w} is precisely the one given in \cite[Section 3]{Cal}. 
\end{ex}

\begin{ex}
Let $n = 3$, and $w = s_1 s_2 s_1 s_3 s_2 \in W$. 
Then we have 
\[D(w) = \begin{ytableau}
+ & + & + & \mbox{} & \mbox{} \\
\none & + & \mbox{} & \mbox{} & \none \\
\none & \none & \mbox{} & \none & \none
\end{ytableau},\]
which satisfies the conditions (i) and (ii). 
However, the ladder move 
\[L_{2, 2} (D(w)) = \begin{ytableau}
+ & + & + & \mbox{} & + \\
\none & \mbox{} & \mbox{} & \mbox{} & \none \\
\none & \none & \mbox{} & \none & \none
\end{ytableau}\]
is defined since $w$ does not satisfy the condition (iii). 
\end{ex}

\begin{defi}[{cf.\ \cite[Definition 18]{Mil}}]
Take $j_1, j_2, \ldots, j_m \in I$ such that $\ell(s_{j_1} \cdots s_{j_k}) = k$ for all $1 \leq k \leq m$.
Then the corresponding sequence $(e, s_{j_1}, \ldots, s_{j_1} \cdots s_{j_m})$ in $W$ is said to be \emph{poptotic} if ${\rm mitosis}_{j_k}^\top (D) \neq \emptyset$ for all $1 \leq k \leq m$ and $D \in {\rm mitosis}_{j_{k-1}}^\top \cdots {\rm mitosis}_{j_1}^\top (SY_n)$.
\end{defi}

By the definition of $D(w)$, we obtain the following. 

\begin{prop}[{cf.\ \cite[Proposition 19]{Mil}}]
For $w \in W$, write ${\bm k}_{D(w)}^\prime = {\bm k}_w = (k_1, \ldots, k_\ell)$. 
Then the sequence $(e, s_{i_{k_1}}, \ldots, s_{i_{k_1}} \cdots s_{i_{k_\ell}} = w)$ is poptotic. 
In addition, it holds that $M_{j_r} (D) \neq \emptyset$ for all $1 \leq r \leq \ell$ and $D \in M_{j_{r-1}} \cdots M_{j_1} (SY_n)$.
\end{prop}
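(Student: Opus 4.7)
The plan is to deduce the proposition from a prefix-stability property of the sequences ${\bm k}_w$ together with a verification that condition~\eqref{cond:mitosis} is preserved throughout the iterated application of the mitosis operators. First I would establish the prefix lemma: for each $1 \le r \le \ell$, setting $w_r \coloneqq s_{i_{k_1}} \cdots s_{i_{k_r}}$, one has ${\bm k}_{w_r} = (k_1, \ldots, k_r)$. Suppose for contradiction this fails, and let $m$ be the smallest index where the $m$-th entry of ${\bm k}_{w_r}$, call it $k'$, differs from $k_m$; by lex-minimality $k' < k_m$. Since ${\bm k}_{w_r}$ begins with $(k_1, \ldots, k_{m-1}, k', \ldots)$ as a reduced subword for $w_r$, $s_{i_{k'}}$ is a left descent of $s_{i_{k_m}} \cdots s_{i_{k_r}}$, so the strong exchange property yields a reduced expression $s_{i_{k_m}} \cdots \widehat{s_{i_{k_\alpha}}} \cdots s_{i_{k_r}}$ for $s_{i_{k'}} s_{i_{k_m}} \cdots s_{i_{k_r}}$ with some $m \le \alpha \le r$. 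Splicing the prefix $(k_1, \ldots, k_{m-1})$, the new entry $k'$, the punctured middle $(k_m, \ldots, \widehat{k_\alpha}, \ldots, k_r)$, and the untouched tail $(k_{r+1}, \ldots, k_\ell)$ produces an increasing sequence in $[N]$ of length $\ell(w)$ whose corresponding word reduces to $w$; this is a reduced subword of ${\bm i}_C$ for $w$ lex-smaller than ${\bm k}_w$, contradicting the minimality of the latter.

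With the prefix lemma in hand, the definition of $\mathscr{M}$ gives $M_{j_{r-1}} \cdots M_{j_1}(SY_n) = \mathscr{M}(w_{r-1})$ for each $r$. Since both $M_{j_r}(D)$ and ${\rm mitosis}_{j_r}^\top(D)$ are non-empty on $D$ exactly when~\eqref{cond:mitosis} holds for $i = j_r$, and since ${\rm mitosis}_{j_{r-1}}^\top \cdots {\rm mitosis}_{j_1}^\top(SY_n) \subseteq \mathscr{M}(w_{r-1})$, it suffices to verify~\eqref{cond:mitosis} with $i = j_r$ for every $D \in \mathscr{M}(w_{r-1})$, which settles both assertions simultaneously. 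For the distinguished element $D = D(w_{r-1})$, \cref{p:description_D(w)} combined with the prefix lemma forces $D(w_r) = D(w_{r-1}) \setminus \{(p_{k_r}, q_{k_r})\}$; examining the $j_r$-strip of $D(w_{r-1})$ via the formula $D(w_{r-1}) = \{(i,j) \in SY_n \mid j \le m(w_{r-1}, i) + i - 1\}$ and the bottom-to-top, right-to-left ordering of the cells of $SY_n$ identifies $(p_{k_r}, q_{k_r})$ with the cell $(p_{r_0}, q_{r_0})$ of~\eqref{eq:mitosis_removed_point} for $i = j_r$ and verifies~\eqref{cond:mitosis}. For a general $D \in \mathscr{M}(w_{r-1}) \subseteq \mathscr{L}(D(w_{r-1}))$, I would track~\eqref{cond:mitosis} through the ladder moves $\widetilde{L}_2, \ldots, \widetilde{L}_{r-1}$ that construct $D$ from $D(w_{r-1})$: moves outside the $j_r$-strip leave it untouched, while moves inside it (which occur only when $j_r = j_s$ for some $s < r$) come with the position constraint $t < k_s < k_r$ built into the definition of $\widetilde{L}_s$, restricting their origin cells to indices strictly below $k_r$ in the ordering and thereby preserving the filled ``tail'' of the strip above $(p_{k_r}, q_{k_r})$ that witnesses~\eqref{cond:mitosis}.

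The main obstacle will be the invariance argument in the last case, when $j_r$ repeats some earlier $j_s$ and ladder moves defining $\mathscr{M}(w_{r-1})$ already act within the $j_r$-strip. One must rule out the possibility that the destination cell $(p_\ell, q_\ell+1)$ of such a ladder move disturbs a cell in the filled tail above $(p_{k_r}, q_{k_r})$. I plan to formalize this as the monotone invariant ``all cells of index $\ge k_r$ in the $j_r$-strip belong to $D$'' and verify its preservation under each admissible ladder move by a short case analysis on whether the origin and destination lie in or outside the filled-tail region, exploiting $k_s < k_r$ and the position constraint $t < k_s$ to ensure the origin is strictly below $(p_{k_r}, q_{k_r})$ in the ordering.
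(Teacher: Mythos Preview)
The paper gives no argument beyond the sentence ``By the definition of $D(w)$, we obtain the following''; what it has in mind is the left-aligned shape of $D(w)$, equivalently that ${\bm k}_w$ meets each block of ${\bm i}_C$ in an initial segment. Your prefix lemma and the identification $M_{j_{r-1}}\cdots M_{j_1}(SY_n)=\mathscr{M}(w_{r-1})$ are correct and make this explicit, and your analysis at $D(w_{r-1})$ correctly pins down $(p_{r_0},q_{r_0})=(p_{k_r},q_{k_r})$.

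There is, however, a real gap in the step from your invariant to condition~\eqref{cond:mitosis} for a general $D\in\mathscr{M}(w_{r-1})$. Your invariant (``all $j_r$-strip cells of index $\ge k_r$ lie in $D$'') is indeed preserved by the ladder moves in $\widetilde L_2,\ldots,\widetilde L_{r-1}$, since their origins sit at indices $t<k_s<k_r$. But \eqref{cond:mitosis} is about strip cells of index $\ge r_0$, and $r_0$ is governed by the \emph{non}-strip cell $(p_{k_r},q_{k_r}+1)$: if some ladder move has filled this cell, then $k_r$ is no longer a candidate for $r_0$, $r_0$ can drop below $k_r$, and your invariant no longer covers the required range. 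Your worry that a destination ``disturbs a cell in the filled tail'' misnames the danger---adding cells cannot hurt the tail. What must be excluded is a destination landing exactly at $(p_{k_r},q_{k_r}+1)$.

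The fix is short and uses your own invariant. From the left-aligned shape of $D(w_{r-1})$ and $D(w_r)=D(w_{r-1})\setminus\{(p_{k_r},q_{k_r})\}$ you already have $(p_{k_r},q_{k_r}+1)\notin D(w_{r-1})$ (it lies outside $SY_n$ if $k_r$ starts a block, and otherwise $k_r-1=k_{r-1}$ so it is $(p_{k_{r-1}},q_{k_{r-1}})$). A ladder move adding this cell would need $\ell=k_r$ and hence $q_\ell=q_{k_r}$ in the $j_r$-strip (so only the case $i_{k_s}=j_r$ is relevant, as you observed) together with $(p_{k_r},q_{k_r})\notin D''$ for the intermediate diagram $D''$; but your invariant applied to $D''$ forces $(p_{k_r},q_{k_r})\in D''$. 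Hence $(p_{k_r},q_{k_r}+1)\notin D$, so $r_0\ge k_r$, and now \eqref{cond:mitosis} follows from the invariant. With this one observation added your argument goes through and amounts to a fleshed-out version of what the paper leaves implicit.
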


\section{A path model to skew pipe dreams}

In this section, we introduce a path model to skew pipe dreams. 
Let $\widetilde{SY}_n$ be the set of $(i, j) \in \z^2$ such that $1 \leq j \leq n$ and such that $1 \leq i \leq 2j-1$.
We visualize $\widetilde{SY}_n$ in a way similar to $SY_n$. 
For instance, write $\widetilde{SY}_3$ as 
\[\widetilde{SY}_3 = \begin{ytableau}
\mbox{} & \mbox{} & \mbox{} \\
\none & \mbox{} & \mbox{} \\
\none & \mbox{} & \mbox{} \\
\none & \none & \mbox{} \\
\none & \none & \mbox{}
\end{ytableau}.\]
Note that the Gelfand--Tsetlin pattern for $G = Sp_{2n}(\c)$ in \cite[Section 6]{Lit} is arranged as the boxes in $\widetilde{SY}_n$. 
Denote by $\widetilde{\mathcal{SPD}}_n$ the power set of $\widetilde{SY}_n$, and describe $D \in \widetilde{\mathcal{SPD}}_n$ by putting $+$ in the boxes corresponding to the elements of $D$. 
For each $D \in \mathcal{SPD}_n$, we define its \emph{Gelfand--Tsetlin type rearrangement} $\Omega(D) \in \widetilde{\mathcal{SPD}}_n$ as follows. 
For $k \in \z_{>0}$, the $(2k-1)$-st row of $\Omega(D)$ is defined to be the first $(n-k+1)$ boxes in the $k$-th row of $D$. 
The $2k$-th row of $\Omega(D)$ is given by reversing the last $(n-k)$ boxes in the $k$-th row of $D$. 

\begin{ex}\label{ex:GT_type_skew_pipe}
Let $n = 3$, and $D = \{(1, 1), (1, 2), (1, 5), (2, 2), (2, 3), (3, 3)\} \in \mathcal{SPD}_3$. 
Then we have 
\[D = \begin{ytableau}
+ & + & \mbox{} & \mbox{} & + \\
\none & + & + & \mbox{} & \none \\
\none & \none & + & \none & \none 
\end{ytableau} \xmapsto{\Omega} \Omega(D) = \begin{ytableau}
+ & + & \mbox{} \\
\none & + & \mbox{} \\
\none & + & + \\
\none & \none & \mbox{} \\
\none & \none & +
\end{ytableau}.\]
\end{ex}

\begin{defi}\label{d:path_model_skew}
Set $\widetilde{SY}_n^{\rm (ex)} \coloneqq \widetilde{SY}_n \sqcup \{(2k, k) \mid 1 \leq k \leq n\} \sqcup \{(k,n+1) \mid 1 \leq k \leq 2n\} \subseteq \z_{>0}^2$ that is an extension of $\widetilde{SY}_n$. 
Let us define a \emph{path model} $\mathscr{G}(D)$ of a skew pipe dream $D \in \mathcal{SPD}_n$ by representing $\Omega(D)$ using a diagram of ``pipes'' as follows. 
\begin{itemize}
\item We first replace all boxes $\begin{ytableau}
+
\end{ytableau}$ in $\Omega(D)$ with ``crossings''
\scalebox{0.12}{
	\begin{tikzpicture}

   \fill (0,1) coordinate (1) node[left=10pt]  {};
   \fill (-1,1) coordinate (2) node[left=10pt]  {};
   \fill (1,2) coordinate (3) node[right=3pt]  {};
   \fill (1,3) coordinate (4) node[right=3pt]  {};
   \fill (1,0) coordinate (5) node[right=3pt]  {};
   \fill (1,-1) coordinate (6) node[left=10pt]  {};
   \fill (2,1) coordinate (7) node[right=3pt]  {};
   \fill (3,1) coordinate (8) node[right=3pt]  {};
   \fill (0,1) coordinate (9) node[left=10pt]  {};
   \fill (1,0) coordinate (10) node[right=3pt]  {};
   \fill (1,2) coordinate (12) node[right=3pt]  {};
   \fill (2,1) coordinate (11) node[right=3pt]  {};
	
	\draw[ultra thick] (1)--(2);
	\draw[ultra thick] (3)--(4);
	\draw[ultra thick] (7)--(8);
	\draw[ultra thick] (6)--(5);
	\draw[ultra thick] (9)--(11);
	\draw[ultra thick] (10)--(12);


	\end{tikzpicture}
}.
\item For $(i, j) \in \widetilde{SY}_n^{\rm (ex)} \setminus \Omega(D)$ with $j \leq n$ and $i$ is odd or with $j = n+1$ and $i$ is even, the $(i, j)$-th empty box is changed to an ``elbow joint'' 
\scalebox{0.12}{
	\begin{tikzpicture}

   \fill (0,1) coordinate (1) node[left=10pt]  {};
   \fill (-1,1) coordinate (2) node[left=10pt]  {};
   \fill (1,2) coordinate (3) node[right=3pt]  {};
   \fill (1,3) coordinate (4) node[right=3pt]  {};
   \fill (1,0) coordinate (5) node[right=3pt]  {};
   \fill (1,-1) coordinate (6) node[left=10pt]  {};
   \fill (2,1) coordinate (7) node[right=3pt]  {};
   \fill (3,1) coordinate (8) node[right=3pt]  {};
	
	\draw[ultra thick] (1)--(2);
	\draw[ultra thick] (3)--(4);
	\draw[ultra thick] (7)--(8);
	\draw[ultra thick] (6)--(5);

\draw[ultra thick] ([shift={(0,2)}]270:1.0) arc[radius=1.0, start angle=270, end angle= 360];
\draw[ultra thick] ([shift={(2,0)}]90:1.0) arc[radius=1.0, start angle=90, end angle= 180];

	\end{tikzpicture}
}.
\item For $(i, j) \in \widetilde{SY}_n^{\rm (ex)} \setminus \Omega(D)$ with $j \leq n$ and $i$ is even or with $j = n+1$ and $i$ is odd, the $(i, j)$-th empty box is replaced with a ``reversed elbow joint'' 
\scalebox{0.12}{
	\begin{tikzpicture}

   \fill (0,1) coordinate (1) node[left=10pt]  {};
   \fill (-1,1) coordinate (2) node[left=10pt]  {};
   \fill (1,2) coordinate (3) node[right=3pt]  {};
   \fill (1,3) coordinate (4) node[right=3pt]  {};
   \fill (1,0) coordinate (5) node[right=3pt]  {};
   \fill (1,-1) coordinate (6) node[left=10pt]  {};
   \fill (2,1) coordinate (7) node[right=3pt]  {};
   \fill (3,1) coordinate (8) node[right=3pt]  {};
	
	\draw[ultra thick] (1)--(2);
	\draw[ultra thick] (3)--(4);
	\draw[ultra thick] (7)--(8);
	\draw[ultra thick] (6)--(5);

\draw[ultra thick] ([shift={(1,1)}]270:1.0) arc[radius=1.0, start angle=0, end angle= 90];
\draw[ultra thick] ([shift={(1,1)}]90:1.0) arc[radius=1.0, start angle=180, end angle= 270];

	\end{tikzpicture}
}.
\end{itemize}
For each $1 \leq k \leq n$, let $\ell_k$ denote the pipe in $\mathscr{G}(D)$ that has an edge at the left end of the $(2k-1)$-st row of $\widetilde{SY}_n^{\rm (ex)}$. 
\end{defi}

By definition, there exists a unique permutation $v_D \colon [n] \rightarrow [n]$ such that the pipe $\ell_k$ has an edge at the top of the $(n+1-v_D(k))$-th column of $\widetilde{SY}_n^{\rm (ex)}$. 
We write $\ell_k = L_{v_D(k)}$. 

\begin{defi}
For $D \in \mathcal{SPD}_n$, we define $w_D \in W$ by 
\[w_D(j) \coloneqq (-1)^{{\rm sign}(L_j)+1} v_D^{-1}(j)\]
and $w_D(-j) = -w_D(j)$ for $j \in [n]$, where ${\rm sign}(L_j)$ denotes the number of self-crossings of the pipe $L_j$ in $\mathscr{G}(D)$.
The skew pipe dream $D$ is said to be \emph{reduced} if $|D| = \ell(w_0)-\ell(w_D)$. 
\end{defi}

By definition, we have $\ell_{|w_D(j)|} = L_j$ for all $j \in [n]$.

\begin{ex}
Let $n = 3$, and $D = \{(1, 1), (1, 2), (1, 5), (2, 2), (2, 3), (3, 3)\} \in \mathcal{SPD}_3$ as in \cref{ex:GT_type_skew_pipe}.
Then the diagram $\mathscr{G}(D)$ is given as in Figure \ref{fig:path_model}.
Since $\ell_1 = L_2$, $\ell_2 = L_3$, and $\ell_3 = L_1$, we have 
\[v_D = \begin{pmatrix}
1 & 2 & 3\\
2 & 3 & 1
\end{pmatrix}\quad \text{and}\quad
w_D = 
\begin{pmatrix}
-3 & -2 & -1 & 1 & 2 & 3\\
-2 & -1 & 3 & -3 & 1 & 2
\end{pmatrix}.\]
In particular, it follows that $\ell(w_0)-\ell(w_D) = 9-3 = 6 = |D|$. 
Hence $D$ is reduced. 
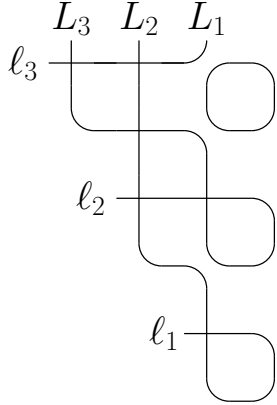
\begin{figure}[h]
\begin{center}
\scalebox{0.3}{
	\begin{tikzpicture}

   \fill (4,2) coordinate (2) node[left=10pt]  {\fontsize{50pt}{15pt}\selectfont \bf $\ell_2$};
   \fill (2,2) coordinate (3) node[right=3pt]  {};
   \fill (2,3) coordinate (4) node[right=3pt]  {};
   \fill (2,4) coordinate (5) node[right=3pt]  {};
   \fill (1,8) coordinate (6) node[left=10pt]  {\fontsize{50pt}{15pt}\selectfont \bf $\ell_3$};
   \fill (2,6) coordinate (7) node[right=3pt]  {};
   \fill (2,7) coordinate (8) node[right=3pt]  {};
   \fill (2,9) coordinate (9) node[above=3pt]  {\fontsize{50pt}{15pt}\selectfont \bf $L_3$};
   \fill (3,8) coordinate (10) node[right=3pt]  {};
   \fill (3,5) coordinate (11) node[right=3pt]  {};
   \fill (4,5) coordinate (12) node[right=3pt]  {};
   \fill (4,8) coordinate (13) node[right=3pt]  {};
   \fill (5,6) coordinate (14) node[right=3pt]  {};
   \fill (5,7) coordinate (15) node[right=3pt]  {};
   \fill (5,9) coordinate (16) node[above=3pt]  {\fontsize{50pt}{15pt}\selectfont \bf $L_2$};
   \fill (8,9) coordinate (17) node[above=3pt]  {\fontsize{50pt}{15pt}\selectfont \bf $L_1$};
   \fill (6,8) coordinate (18) node[right=3pt]  {};
   \fill (7,8) coordinate (19) node[right=3pt]  {};
   \fill (5,3) coordinate (20) node[right=3pt]  {};
   \fill (8,7) coordinate (21) node[right=3pt]  {};
   \fill (8,6) coordinate (22) node[right=3pt]  {};
   \fill (9,8) coordinate (23) node[right=3pt]  {};
   \fill (10,8) coordinate (24) node[right=3pt]  {};
   \fill (9,5) coordinate (25) node[right=3pt]  {};
   \fill (10,5) coordinate (26) node[right=3pt]  {};
   \fill (11,7) coordinate (27) node[right=3pt]  {};
   \fill (11,6) coordinate (28) node[right=3pt]  {};
   \fill (7,5) coordinate (29) node[right=3pt]  {};
   \fill (7,-4) coordinate (30) node[left=3pt]  {\fontsize{50pt}{15pt}\selectfont \bf $\ell_1$};
   \fill (5,0) coordinate (31) node[right=3pt]  {};
   \fill (10,2) coordinate (32) node[right=3pt]  {};
   \fill (8,4) coordinate (33) node[right=3pt]  {};
   \fill (8,0) coordinate (34) node[right=3pt]  {};
   \fill (6,-1) coordinate (35) node[right=3pt]  {};
   \fill (7,-1) coordinate (36) node[right=3pt]  {};
   \fill (9,-1) coordinate (37) node[right=3pt]  {};
   \fill (10,-1) coordinate (38) node[right=3pt]  {};
   \fill (11,0) coordinate (39) node[right=3pt]  {};
   \fill (11,1) coordinate (40) node[right=3pt]  {};
  \fill (8,-2) coordinate (41) node[right=3pt]  {};
  \fill (8,-6) coordinate (42) node[right=3pt]  {};
  \fill (10,-4) coordinate (43) node[right=3pt]  {};
  \fill (9,-7) coordinate (44) node[right=3pt]  {};
  \fill (10,-7) coordinate (45) node[right=3pt]  {};
  \fill (11,-6) coordinate (46) node[right=3pt]  {};
  \fill (11,-5) coordinate (47) node[right=3pt]  {};

	\draw[ultra thick] (7)--(8);
	\draw[ultra thick] (9)--(8);
	\draw[ultra thick] (6)--(10);
	\draw[ultra thick] (10)--(19);
	\draw[ultra thick] (11)--(12);
	\draw[ultra thick] (21)--(22);
	\draw[ultra thick] (23)--(24);
	\draw[ultra thick] (25)--(26);
	\draw[ultra thick] (27)--(28);
	\draw[ultra thick] (13)--(10);
	\draw[ultra thick] (14)--(15);
	\draw[ultra thick] (29)--(12);
	\draw[ultra thick] (14)--(20);
	\draw[ultra thick] (31)--(20);
	\draw[ultra thick] (33)--(34);
	\draw[ultra thick] (35)--(36);
	\draw[ultra thick] (38)--(37);
	\draw[ultra thick] (39)--(40);
	\draw[ultra thick] (16)--(15);
	\draw[ultra thick] (18)--(19);
	\draw[ultra thick] (30)--(43);
	\draw[ultra thick] (32)--(2);
	\draw[ultra thick] (41)--(42);
	\draw[ultra thick] (44)--(45);
	\draw[ultra thick] (47)--(46);

\draw[ultra thick] ([shift={(9,7)}]90:1.0) arc[radius=1.0, start angle=90, end angle= 180];
\draw[ultra thick] ([shift={(8,5)}]90:1.0) arc[radius=1.0, start angle=180, end angle= 270];
\draw[ultra thick] ([shift={(11,6)}]90:1.0) arc[radius=1.0, start angle=0, end angle= 90];
\draw[ultra thick] ([shift={(10,4)}]90:1.0) arc[radius=1.0, start angle=270, end angle= 360];
\draw[ultra thick] ([shift={(11,0)}]90:1.0) arc[radius=1.0, start angle=0, end angle= 90];
\draw[ultra thick] ([shift={(8,-1)}]90:1.0) arc[radius=1.0, start angle=180, end angle= 270];
\draw[ultra thick] ([shift={(10,-2)}]90:1.0) arc[radius=1.0, start angle=270, end angle= 360];
\draw[ultra thick] ([shift={(8,3)}]90:1.0) arc[radius=1.0, start angle=0, end angle= 90];
\draw[ultra thick] ([shift={(5,-1)}]90:1.0) arc[radius=1.0, start angle=180, end angle= 270];
\draw[ultra thick] ([shift={(2,5)}]90:1.0) arc[radius=1.0, start angle=180, end angle= 270];
\draw[ultra thick] ([shift={(7,7)}]90:1.0) arc[radius=1.0, start angle=270, end angle= 360];
\draw[ultra thick] ([shift={(8,-3)}]90:1.0) arc[radius=1.0, start angle=0, end angle= 90];
\draw[ultra thick] ([shift={(11,-6)}]90:1.0) arc[radius=1.0, start angle=0, end angle= 90];
\draw[ultra thick] ([shift={(8,-7)}]90:1.0) arc[radius=1.0, start angle=180, end angle= 270];
\draw[ultra thick] ([shift={(10,-8)}]90:1.0) arc[radius=1.0, start angle=270, end angle= 360];

	\end{tikzpicture}
}
\end{center}
\caption{\label{fig:path_model} The diagram $\mathscr{G}(D)$ for $D = \{(1, 1), (1, 2), (1, 5), (2, 2), (2, 3), (3, 3)\} \in \mathcal{SPD}_3$.}
\end{figure}
\end{ex}

The following is the main result of this section. 

\begin{thm}\label{t:main_result_1}
For all $w \in W$, the set $\mathscr{L}(D(w))$ coincides with the set of reduced skew pipe dreams $D \in \mathcal{SPD}_n$ such that $w_D = w$.
\end{thm}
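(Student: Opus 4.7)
The plan is to prove both inclusions of the equality
$\mathscr{L}(D(w)) = \{D \in \mathcal{SPD}_n \mid D \text{ is reduced and } w_D = w\}$ separately. For the forward inclusion, it suffices to verify that (i) $D(w)$ itself lies on the right-hand side, and (ii) ladder moves preserve both reducedness and $w_D$. Invariance of $|D|$ under a ladder move is immediate from the definition (one $+$ is removed and one is added), and reducedness of $D(w)$ is a bookkeeping identity:
$|D(w)| = N - |{\bm k}^\prime_{D(w)}| = N - |{\bm k}_w| = N - \ell(w) = \ell(w_0) - \ell(w).$
So the real content lies in two claims: $w_{D(w)} = w$, and $w_D$ is invariant under ladder moves.

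For $w_{D(w)} = w$, I would exploit the explicit shape from \cref{p:description_D(w)}, namely $D(w) = \{(i,j) \in SY_n \mid j \leq m(w,i) + i - 1\}$. In $\Omega(D(w))$ each odd row $(2k-1)$ has a left-justified block of $+$'s and each even row $2k$ has its $+$'s at the right, so in $\mathscr{G}(D(w))$ the pipe $\ell_k$ enters at the left of row $2k-1$, passes straight up through a column of crossings, and then makes a predictable sequence of elbows. Tracing these paths and counting self-crossings expresses $v_{D(w)}$ and the parities $\mathrm{sign}(L_j)$ in terms of the integers $m(w,i)$; matching this output against the counting formula for $m(w,i)$ in \cref{p:description_D(w)} recovers $w_{D(w)} = w$.

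For the ladder-move invariance of $w_D$, I would perform a local analysis in the path model. A ladder move $L_{i,j}$ displaces a single $+$ upward through a run of column-adjacent filled pairs, so after applying $\Omega$ only two pipes are locally modified. The key observation is that these two pipes enter and leave the affected rectangle through the same four tabs before and after the move, and the difference in their self-crossing counts in the rectangle is even. Two subcases must be handled: ladders confined to a single column $j$ of $\widetilde{SY}_n$, and ladders that jump between columns $j$ and $2n-j$ via the ``turning'' column $n+1$ of $\widetilde{SY}_n^{\rm (ex)}$ introduced by the row-reversal in $\Omega$. For the reverse inclusion, I would induct on the monovariant $\Phi(D) \coloneqq \sum_{(i,j) \in D} j$, which $D(w)$ minimizes among reduced skew pipe dreams with $w_D = w$ because its crossings are packed as far left as possible. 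Given $D \neq D(w)$ with $w_D = w$, I would locate a rightmost $+$ of $D$ not in $D(w)$ and argue that an inverse ladder move at that box is defined: the necessary empty pair of boxes further up the column is forced by reducedness, since otherwise the pipes of $\mathscr{G}(D)$ would realize $w_D$ with too few crossings, contradicting $|D| = \ell(w_0) - \ell(w)$. Applying this inverse ladder move strictly decreases $\Phi$, so the induction terminates at $D(w)$.

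The main obstacle will be the ladder-move invariance of $w_D$. Ladders in type $C$ can be long and can wrap around the turning column of $\widetilde{SY}_n^{\rm (ex)}$, so the local picture in $\mathscr{G}(D)$ involves not only plain crossings and elbows but also the reversed elbows coming from the second half of each row of $\Omega(D)$; the bookkeeping of self-crossing parity across the turn is where care is needed. I expect that once the invariance is nailed down for a short ``elementary'' ladder (one that moves a $+$ by a single rung), longer ladders follow by composing elementary ones, and the turning-column case reduces to the straight case by exploiting the symmetry between columns $j$ and $2n-j$ that is built into the ladder-move definition.
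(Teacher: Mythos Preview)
Your forward-inclusion plan matches the paper's (\cref{l:reduced_D(w)}, \cref{l:ladder_move_of_path_model}, \cref{l:stable_ladder_RSP}): verify $w_{D(w)}=w$ from the left-justified shape of $D(w)$, then show that ladder moves preserve $w_D$ by a local analysis of $\mathscr{G}(D)$, splitting into cases according to whether the ladder touches the central column, just as you propose. One caveat: a long ladder does not decompose into elementary one-rung ladders (the intermediate states are not valid ladder-move configurations), so the paper analyses the general ladder in a single picture rather than reducing to a base case.

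The reverse inclusion has a genuine gap, and it is caused by precisely the wrap-around you flagged for the forward direction. Your monovariant $\Phi(D)=\sum_{(i,j)\in D} j$ need not decrease under an inverse ladder move: for $n=2$ and $w=s_2s_1s_2$, the inverse ladder move $L_{1,2}^{-1}$ sends $\{(1,2)\}\in RSP(w)$ to $\{(1,3)\}$, raising $\Phi$ from $2$ to $3$, because the $+$ wraps from column $j=2$ to column $2n-j+1=3$. Worse, your chosen box, the rightmost $+$ of $D$ not in $D(w)$, need not admit an inverse ladder move at all: for $n=3$ and $D=\{(1,1),(1,2),(1,4),(1,5)\}$ (the last skew pipe dream in \cref{ex:ladder_moves_type_C}), that rightmost box is $(1,5)$, but $(1,4)\in D$ blocks $L^{-1}_{1,5}$. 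The paper repairs both issues by working in the folded ordering $(p_1,q_1),\ldots,(p_N,q_N)$ rather than in the raw column index: it selects the \emph{minimum} $k$ with $(p_k,q_k)\notin D$ and $(p_k,q_k+1)\in D$, shows via exactly your reducedness argument that $L^{-1}_{p_k,q_k+1}(D)$ is then defined, and iterates. Since every inverse ladder move replaces a $+$ at some index $k$ by one at a strictly smaller index $\ell<k$, the monovariant $\sum\{\,r\mid (p_r,q_r)\in D\,\}$ decreases, which is what makes the iteration terminate; your $\Phi$ is the unfolded shadow of this quantity and loses the ordering information across the fold.
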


To prove \cref{t:main_result_1}, we prepare some lemmas. 

\begin{lem}\label{l:reduced_D(w)}
For every $w \in W$, the skew pipe dream $D(w)$ is reduced, and the equality $w_{D(w)} = w$ holds. 
\end{lem}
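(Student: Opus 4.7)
The plan is to establish two facts that together yield the lemma: (i) $|D(w)| = \ell(w_0) - \ell(w)$, and (ii) $w_{D(w)} = w$. From these, reducedness of $D(w)$ follows immediately from the definition.

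For (i), Proposition \ref{p:description_D(w)} gives $|D(w)| = \sum_{i=1}^n m(w, i)$; substituting $k = n - i + 1$, this becomes
\[
|D(w)| = \sum_{k=1}^n |\{j \in \tilde{I} : -k \leq j < k,\ w^{-1}(j) < w^{-1}(k)\}|.
\]
The total number of pairs $(j, k) \in \tilde{I} \times I$ with $-k \leq j < k$ is $\sum_{k=1}^n (2k-1) = n^2 = \ell(w_0)$. By Proposition \ref{p:length_formula} applied to $w^{-1}$, the pairs with $w^{-1}(j) > w^{-1}(k)$ number $\ell(w^{-1}) = \ell(w)$; the complementary count gives $|D(w)| = \ell(w_0) - \ell(w)$.

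For (ii), I would proceed by induction on $\ell(w)$. In the base case $w = e$, one has $D(e) = SY_n$ and every cell of $\widetilde{SY}_n$ in $\mathscr{G}(SY_n)$ is a crossing. A direct trace shows that each pipe $\ell_k$ runs rightward along its starting row through crossings to the extension column $n+1$, descends one row through the elbow/reversed-elbow pair in the extension column, returns leftward to the extension cell at column $n-k+1$, turns upward there, loops through the starting cell as a single self-crossing, then continues vertically upward through crossings to exit at the top of column $n-k+1$. This yields $v_{SY_n} = \mathrm{id}$ with every $L_j$ of odd sign, so $w_{SY_n} = e$. For the inductive step, I would use the decomposition $w = w_1 w_2 \cdots w_n$ from the proof of Proposition \ref{p:description_D(w)}: writing $w = w' s_i$ for a suitable $w'$ obtained by removing a single simple reflection from this expression, $D(w)$ and $D(w')$ differ by a single cell; in $\mathscr{G}$ this locally swaps an elbow for a crossing, and by tracing the affected pipes one can check that the signed permutation of the pipe model is composed with $s_i$, completing the induction.

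The principal difficulty lies in (ii): pipes can revisit cells (producing self-crossings that flip the signs assigned to pipes), and tracking them through interleaved crossings, elbow joints, and reversed elbow joints requires careful case analysis. The block structure of $D(w)$ --- consecutive $+$'s in columns $i$ through $i + m(w, i) - 1$ of each row $i$ --- is the key feature that keeps this bookkeeping manageable, since within each row the pipe behavior is controlled by the single integer $m(w, i)$.
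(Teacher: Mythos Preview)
Your argument for (i) is clean and correct; it is in fact more explicit than the paper, which simply asserts $|D(w)| = \ell(w_0) - \ell(w)$ ``by the definition of $D(w)$''.

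For (ii), your route is genuinely different from the paper's. The paper does \emph{not} induct on $\ell(w)$. It instead exploits the factorization $w = w_1 w_2 \cdots w_n$ all at once: for each $j$ it isolates the $(2j-1)$-st and $2j$-th rows of $\Omega(D(w))$ as a two-row ``slab'' $\mathscr{G}^{(j)}(D(w))$. Because $D(w)$ is left-justified in every row, each slab has one of two very simple shapes (depending on the sign of $w_j^{-1}(j)$), and one reads off directly from the picture that the slab permutes the $j$ strands entering it exactly by $w_j^{-1}$, with $\ell_j$ acquiring a self-crossing precisely when $w_j^{-1}(j) > 0$. Stacking the slabs gives $w_{D(w)} = w$ in one pass. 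This buys the paper a uniform argument with no recursion; the block structure of $D(w)$ is used once per row, not once per cell.

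Your inductive step, by contrast, rests on the assertion that deleting the single boundary cell distinguishing $D(w)$ from $D(w')$ composes $w_{D(w')}$ with $s_i$. This is true, but it is not the routine check you suggest: you must show that the two strands occupying that cell in $\mathscr{G}(D(w'))$ are exactly $L_{i-1}$ and $L_i$ (or $L_1$ twice, when $i=1$). That identification is a statement about the global wiring of $\mathscr{G}(D(w'))$ --- the relevant pipes may thread through many slabs before reaching the cell --- and establishing it requires precisely the slab-by-slab bookkeeping the paper carries out directly. The paper does perform this kind of local cell-removal analysis later (see the proof of \cref{l:mitosis_inclusion}), but only \emph{after} the present lemma is available to guarantee reducedness. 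So your inductive scheme can be completed, but the step you flag as ``tracing the affected pipes'' is the entire content of the proof, not a detail; as written it is a promissory note rather than an argument.
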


\begin{proof}
By the definition of $D(w)$, it follows that $|D(w)| = \ell(w_0) - \ell(w)$. 
Hence it suffices to show that $w_{D(w)} = w$. 
We write $w = w_1 w_2 \cdots w_n$ as in the proof of \cref{p:description_D(w)}. 
For each $1 \leq j \leq n$, focus on the $(2j-1)$-st and $2j$-th rows of $\Omega(D(w))$. 
Then we define a subdiagram $\mathscr{G}^{(j)}(D(w))$ of $\mathscr{G}(D(w))$ by forgetting the pieces corresponding to other rows. 
The subdiagram $\mathscr{G}^{(j)}(D(w))$ is given as in Figure \ref{fig:path_model_j}. 
In particular, $\ell_j$ does not have a self-crossing if and only if $w_j^{-1}(j) < 0$. 
Define pipes $L_1^{(j)}, \ldots, L_j^{(j)}, \ell_1^{(j)}, \ldots, \ell_{j-1}^{(j)}$ in $\mathscr{G}^{(j)}(D(w))$ as in Figure \ref{fig:path_model_j}. 
Then it follows by Figure \ref{fig:path_model_j} that $\ell_j = L_{\sigma_j}^{(j)}$ and that $\ell_k^{(j)} = L^{(j)}_{w_j^{-1}(k)}$ for all $1 \leq k \leq j-1$, where $\sigma_j \coloneqq |w_j^{-1}(j)|$.
Hence we have $|w_{D(w)}(k)| = |w(k)|$ for all $k \in [n]$. 
In addition, since $w^{-1}(j) < 0$ if and only if $w_j^{-1}(j) < 0$, it holds that $w^{-1}(j) < 0$ if and only if $\ell_j$ does not have a self-crossing, which is also equivalent to the condition that $w_{D(w)}^{-1}(j) < 0$. 
From these, we deduce that $w_{D(w)} = w$. 
This proves the lemma. 
\begin{figure}[h]
\begin{minipage}[b]{0.4\linewidth}
\begin{center}
\scalebox{0.25}{
	\begin{tikzpicture}

   \fill (-4,8) coordinate (1) node[left=10pt]  {\fontsize{50pt}{15pt}\selectfont \bf $\ell_j$};
   \fill (-3,6) coordinate (2) node[right=3pt]  {};
   \fill (-3,7) coordinate (3) node[right=3pt]  {};
   \fill (-3,9) coordinate (4) node[above=3pt]  {\fontsize{50pt}{15pt}\selectfont \bf $L_j^{(j)}$};
   \fill (-2,8) coordinate (5) node[right=3pt]  {};
   \fill (-2,5) coordinate (6) node[right=3pt]  {};
   \fill (-1,5) coordinate (7) node[right=3pt]  {};
   \fill (-1,8) coordinate (8) node[right=3pt]  {};
   \fill (0,4) coordinate (9) node[right=3pt]  {};
   \fill (0,6) coordinate (10) node[right=3pt]  {};
   \fill (0,7) coordinate (11) node[right=3pt]  {};
   \fill (0,9) coordinate (12) node[above=3pt]  {};
   \fill (3,9) coordinate (13) node[above=3pt]  {\fontsize{50pt}{15pt}\selectfont \bf $L_{\sigma_j}^{(j)}$};
   \fill (3,4) coordinate (14) node[right=3pt]  {};
   \fill (3,3) coordinate (15) node[right=3pt]  {};
   \fill (1,5) coordinate (16) node[right=3pt]  {};
   \fill (2,5) coordinate (17) node[right=3pt]  {};
   \fill (1,8) coordinate (18) node[right=3pt]  {};
   \fill (2,8) coordinate (19) node[right=3pt]  {};
   \fill (0,3) coordinate (20) node[below=3pt]  {\fontsize{50pt}{15pt}\selectfont \bf $\ell_{j-1}^{(j)}$};
   \fill (12,7) coordinate (21) node[right=3pt]  {};
   \fill (12,6) coordinate (22) node[right=3pt]  {};
   \fill (13,8) coordinate (23) node[right=3pt]  {};
   \fill (14,8) coordinate (24) node[right=3pt]  {};
   \fill (13,5) coordinate (25) node[right=3pt]  {};
   \fill (14,5) coordinate (26) node[right=3pt]  {};
   \fill (15,7) coordinate (27) node[right=3pt]  {};
   \fill (15,6) coordinate (28) node[right=3pt]  {};
   \fill (4,5) coordinate (29) node[right=3pt]  {};
   \fill (5,5) coordinate (30) node[right=3pt]  {};
   \fill (7,5) coordinate (31) node[right=3pt]  {};
   \fill (8,5) coordinate (32) node[right=3pt]  {};
   \fill (10,5) coordinate (33) node[right=3pt]  {};
   \fill (11,5) coordinate (34) node[right=3pt]  {};
   \fill (6,4) coordinate (35) node[right=3pt]  {};
   \fill (6,3) coordinate (36) node[right=3pt]  {};
   \fill (9,4) coordinate (37) node[right=3pt]  {};
   \fill (9,3) coordinate (38) node[below=3pt]  {\fontsize{50pt}{15pt}\selectfont \bf $\ell_2^{(j)}$};
   \fill (12,4) coordinate (39) node[right=3pt]  {};
   \fill (12,3) coordinate (40) node[below=3pt]  {\fontsize{50pt}{15pt}\selectfont \bf $\ell_1^{(j)}$};
   \fill (3,6) coordinate (41) node[right=3pt]  {};
   \fill (3,7) coordinate (42) node[right=3pt]  {};
   \fill (6,6) coordinate (43) node[right=3pt]  {};
   \fill (6,7) coordinate (44) node[right=3pt]  {};
   \fill (9,6) coordinate (45) node[right=3pt]  {};
   \fill (9,7) coordinate (46) node[right=3pt]  {};
   \fill (4,8) coordinate (47) node[right=3pt]  {};
   \fill (5,8) coordinate (48) node[right=3pt]  {};
   \fill (7,8) coordinate (49) node[right=3pt]  {};
   \fill (8,8) coordinate (50) node[right=3pt]  {};
   \fill (10,8) coordinate (51) node[right=3pt]  {};
   \fill (11,8) coordinate (52) node[right=3pt]  {};
   \fill (9,9) coordinate (53) node[above=3pt]  {\fontsize{50pt}{15pt}\selectfont \bf $L_2^{(j)}$};
   \fill (12,9) coordinate (54) node[above=3pt]  {\fontsize{50pt}{15pt}\selectfont \bf $L_1^{(j)}$};

	\draw[ultra thick] (2)--(3);
	\draw[ultra thick] (4)--(3);
	\draw[ultra thick] (1)--(5);
	\draw[ultra thick] (5)--(19);
	\draw[ultra thick] (6)--(7);
	\draw[ultra thick] (21)--(22);
	\draw[ultra thick] (23)--(24);
	\draw[ultra thick] (25)--(26);
	\draw[ultra thick] (27)--(28);
	\draw[ultra thick] (8)--(5);
	\draw[ultra thick] (14)--(15);
	\draw[ultra thick] (10)--(11);
	\draw[ultra thick] (9)--(20);
	\draw[ultra thick] (12)--(11);
	\draw[ultra thick] (18)--(19);
	\draw[ultra thick] (16)--(17);
	\draw[ultra thick] (30)--(29);
	\draw[ultra thick] (31)--(32);
	\draw[ultra thick] (33)--(34);
	\draw[ultra thick] (35)--(36);
	\draw[ultra thick] (37)--(38);
	\draw[ultra thick] (40)--(39);
	\draw[ultra thick] (41)--(42);
	\draw[ultra thick] (43)--(44);
	\draw[ultra thick] (45)--(46);
	\draw[ultra thick] (47)--(48);
	\draw[ultra thick] (49)--(50);
	\draw[ultra thick] (51)--(52);

\draw[ultra thick] ([shift={(13,7)}]90:1.0) arc[radius=1.0, start angle=90, end angle= 180];
\draw[ultra thick] ([shift={(12,5)}]90:1.0) arc[radius=1.0, start angle=180, end angle= 270];
\draw[ultra thick] ([shift={(15,6)}]90:1.0) arc[radius=1.0, start angle=0, end angle= 90];
\draw[ultra thick] ([shift={(14,4)}]90:1.0) arc[radius=1.0, start angle=270, end angle= 360];
\draw[ultra thick] ([shift={(3,3)}]90:1.0) arc[radius=1.0, start angle=0, end angle= 90];
\draw[ultra thick] ([shift={(-3,5)}]90:1.0) arc[radius=1.0, start angle=180, end angle= 270];
\draw[ultra thick] ([shift={(0,5)}]90:1.0) arc[radius=1.0, start angle=180, end angle= 270];
\draw[ultra thick] ([shift={(0,3)}]90:1.0) arc[radius=1.0, start angle=0, end angle= 90];
\draw[ultra thick] ([shift={(2,7)}]90:1.0) arc[radius=1.0, start angle=270, end angle= 360];
\draw[ultra thick] ([shift={(11,7)}]90:1.0) arc[radius=1.0, start angle=270, end angle= 360];
\draw[ultra thick] ([shift={(6,3)}]90:1.0) arc[radius=1.0, start angle=0, end angle= 90];
\draw[ultra thick] ([shift={(3,5)}]90:1.0) arc[radius=1.0, start angle=180, end angle= 270];
\draw[ultra thick] ([shift={(9,3)}]90:1.0) arc[radius=1.0, start angle=0, end angle= 90];
\draw[ultra thick] ([shift={(6,5)}]90:1.0) arc[radius=1.0, start angle=180, end angle= 270];
\draw[ultra thick] ([shift={(12,3)}]90:1.0) arc[radius=1.0, start angle=0, end angle= 90];
\draw[ultra thick] ([shift={(9,5)}]90:1.0) arc[radius=1.0, start angle=180, end angle= 270];
\draw[ultra thick] ([shift={(10,7)}]90:1.0) arc[radius=1.0, start angle=90, end angle= 180];
\draw[ultra thick] ([shift={(7,7)}]90:1.0) arc[radius=1.0, start angle=90, end angle= 180];
\draw[ultra thick] ([shift={(4,7)}]90:1.0) arc[radius=1.0, start angle=90, end angle= 180];
\draw[ultra thick] ([shift={(8,7)}]90:1.0) arc[radius=1.0, start angle=270, end angle= 360];
\draw[ultra thick] ([shift={(5,7)}]90:1.0) arc[radius=1.0, start angle=270, end angle= 360];

	\end{tikzpicture}
}
\end{center}

\subcaption{$w_j^{-1}(j) < 0$.}
\end{minipage}
\begin{minipage}[b]{0.4\linewidth}
\begin{center}
\scalebox{0.25}{
	\begin{tikzpicture}

   \fill (-4,8) coordinate (1) node[left=10pt]  {\fontsize{50pt}{15pt}\selectfont \bf $\ell_j$};
   \fill (-3,6) coordinate (2) node[right=3pt]  {};
   \fill (-3,7) coordinate (3) node[right=3pt]  {};
   \fill (-3,9) coordinate (4) node[above=3pt]  {\fontsize{50pt}{15pt}\selectfont \bf $L_j^{(j)}$};
   \fill (-2,8) coordinate (5) node[right=3pt]  {};
   \fill (-2,5) coordinate (6) node[right=3pt]  {};
   \fill (-1,5) coordinate (7) node[right=3pt]  {};
   \fill (-1,8) coordinate (8) node[right=3pt]  {};
   \fill (0,4) coordinate (9) node[right=3pt]  {};
   \fill (0,6) coordinate (10) node[right=3pt]  {};
   \fill (0,7) coordinate (11) node[right=3pt]  {};
   \fill (0,9) coordinate (12) node[above=3pt]  {};
   \fill (3,9) coordinate (13) node[above=3pt]  {\fontsize{50pt}{15pt}\selectfont \bf $L_{\sigma_j}^{(j)}$};
   \fill (3,4) coordinate (14) node[right=3pt]  {};
   \fill (3,3) coordinate (15) node[right=3pt]  {};
   \fill (1,5) coordinate (16) node[right=3pt]  {};
   \fill (2,5) coordinate (17) node[right=3pt]  {};
   \fill (1,8) coordinate (18) node[right=3pt]  {};
   \fill (2,8) coordinate (19) node[right=3pt]  {};
   \fill (0,3) coordinate (20) node[below=3pt]  {\fontsize{50pt}{15pt}\selectfont \bf $\ell_{j-1}^{(j)}$};
   \fill (12,7) coordinate (21) node[right=3pt]  {};
   \fill (12,6) coordinate (22) node[right=3pt]  {};
   \fill (13,8) coordinate (23) node[right=3pt]  {};
   \fill (14,8) coordinate (24) node[right=3pt]  {};
   \fill (13,5) coordinate (25) node[right=3pt]  {};
   \fill (14,5) coordinate (26) node[right=3pt]  {};
   \fill (15,7) coordinate (27) node[right=3pt]  {};
   \fill (15,6) coordinate (28) node[right=3pt]  {};
   \fill (4,5) coordinate (29) node[right=3pt]  {};
   \fill (5,5) coordinate (30) node[right=3pt]  {};
   \fill (7,5) coordinate (31) node[right=3pt]  {};
   \fill (8,5) coordinate (32) node[right=3pt]  {};
   \fill (10,5) coordinate (33) node[right=3pt]  {};
   \fill (11,5) coordinate (34) node[right=3pt]  {};
   \fill (6,4) coordinate (35) node[right=3pt]  {};
   \fill (6,3) coordinate (36) node[right=3pt]  {};
   \fill (9,4) coordinate (37) node[right=3pt]  {};
   \fill (9,3) coordinate (38) node[below=3pt]  {\fontsize{50pt}{15pt}\selectfont \bf $\ell_2^{(j)}$};
   \fill (12,4) coordinate (39) node[right=3pt]  {};
   \fill (12,3) coordinate (40) node[below=3pt]  {\fontsize{50pt}{15pt}\selectfont \bf $\ell_1^{(j)}$};
   \fill (3,6) coordinate (41) node[right=3pt]  {};
   \fill (3,7) coordinate (42) node[right=3pt]  {};
   \fill (6,6) coordinate (43) node[right=3pt]  {};
   \fill (6,7) coordinate (44) node[right=3pt]  {};
   \fill (9,6) coordinate (45) node[right=3pt]  {};
   \fill (9,7) coordinate (46) node[right=3pt]  {};
   \fill (4,8) coordinate (47) node[right=3pt]  {};
   \fill (5,8) coordinate (48) node[right=3pt]  {};
   \fill (7,8) coordinate (49) node[right=3pt]  {};
   \fill (8,8) coordinate (50) node[right=3pt]  {};
   \fill (10,8) coordinate (51) node[right=3pt]  {};
   \fill (11,8) coordinate (52) node[right=3pt]  {};
   \fill (9,9) coordinate (53) node[above=3pt]  {\fontsize{50pt}{15pt}\selectfont \bf $L_2^{(j)}$};
   \fill (12,9) coordinate (54) node[above=3pt]  {\fontsize{50pt}{15pt}\selectfont \bf $L_1^{(j)}$};
  \fill (6,9) coordinate (55) node[right=3pt]  {};

	\draw[ultra thick] (2)--(3);
	\draw[ultra thick] (4)--(3);
	\draw[ultra thick] (1)--(5);
	\draw[ultra thick] (5)--(19);
	\draw[ultra thick] (6)--(7);
	\draw[ultra thick] (41)--(13);
	\draw[ultra thick] (23)--(24);
	\draw[ultra thick] (25)--(26);
	\draw[ultra thick] (27)--(28);
	\draw[ultra thick] (8)--(5);
	\draw[ultra thick] (14)--(15);
	\draw[ultra thick] (10)--(11);
	\draw[ultra thick] (9)--(20);
	\draw[ultra thick] (12)--(11);
	\draw[ultra thick] (18)--(19);
	\draw[ultra thick] (16)--(17);
	\draw[ultra thick] (23)--(19);
	\draw[ultra thick] (25)--(29);
	\draw[ultra thick] (55)--(36);
	\draw[ultra thick] (53)--(38);
	\draw[ultra thick] (40)--(54);

\draw[ultra thick] ([shift={(15,6)}]90:1.0) arc[radius=1.0, start angle=0, end angle= 90];
\draw[ultra thick] ([shift={(14,4)}]90:1.0) arc[radius=1.0, start angle=270, end angle= 360];
\draw[ultra thick] ([shift={(3,3)}]90:1.0) arc[radius=1.0, start angle=0, end angle= 90];
\draw[ultra thick] ([shift={(-3,5)}]90:1.0) arc[radius=1.0, start angle=180, end angle= 270];
\draw[ultra thick] ([shift={(0,5)}]90:1.0) arc[radius=1.0, start angle=180, end angle= 270];
\draw[ultra thick] ([shift={(0,3)}]90:1.0) arc[radius=1.0, start angle=0, end angle= 90];
\draw[ultra thick] ([shift={(3,5)}]90:1.0) arc[radius=1.0, start angle=180, end angle= 270];

	\end{tikzpicture}
}
\end{center}
\subcaption{$w_j^{-1}(j) > 0$.}
\end{minipage}
\caption{\label{fig:path_model_j} The subdiagram $\mathscr{G}^{(j)}(D(w))$ of $\mathscr{G}(D(w))$.}
\end{figure}
\end{proof}

\begin{lem}\label{l:ladder_move_of_path_model}
Let $D \in \mathcal{SPD}_n$, and assume that the ladder move $L_{i,j}(D)$ is defined. 
Then the equality $w_{L_{i,j}(D)} = w_D$ holds. 
Similarly, if the inverse ladder move $L_{i,j}^{-1}(D)$ is defined, then it holds that $w_{L_{i,j}^{-1}(D)} = w_D$.
\end{lem}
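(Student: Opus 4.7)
The plan is to prove the lemma by a direct local analysis of the effect of a ladder move on the diagram $\mathscr{G}(D)$. Since the forward ladder move $L_{i,j}$ and its inverse $L_{p,q}^{-1}$ are mutual inverses (as noted before the statement), it suffices to establish the claim for forward ladder moves. The signed permutation $w_D$ is determined by two pieces of data: the underlying (unsigned) permutation $v_D$ and the parity of the number of self-crossings of each pipe $L_j$. So the goal reduces to showing that a ladder move preserves both.

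First I would unpack how the move acts on the rearrangement $\Omega(D) \in \widetilde{\mathcal{SPD}}_n$. The move removes a $+$ at $(i,j) \in D$ and inserts one at $(p_\ell, q_\ell+1)$ with $q_\ell \in \{j, 2n-j\}$; by the intermediate condition of the definition, between these two positions the pair of columns $\{j, j+1\}$ or $\{2n-j, 2n-j+1\}$ in $D$ looks like a crossing at the top, a stack of paired filled boxes in between, and a pair of empty boxes at the bottom. Using the explicit formula for $\Omega$ (rows of $D$ with column index $\leq n$ go to row $2k-1$ of $\widetilde{SY}_n$ unchanged; column indices $\geq n+1$ go to row $2k$ reversed), this pattern becomes a \emph{two-column} block of $\Omega(D)$ of exactly the same shape as in the classical (type $A$) ladder move on pipe dreams.

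Second, I would translate this block into $\mathscr{G}(D)$ via \cref{d:path_model_skew}: a $+$ becomes a crossing, and an empty box becomes an (ordinary or reversed) elbow joint depending on the parity of the row and whether it lies in column $n{+}1$ of $\widetilde{SY}_n^{\rm (ex)}$. In each of the four combinatorial subcases — namely, $q_\ell \in \{j, 2n-j\}$ crossed with the location of $(i,j)$ being in the left or right half of $D$ — I would draw the local picture before and after the move and verify directly that (i) every pipe entering the affected rectangular region exits along the same strand (so $v_D$ is preserved), and (ii) the number of self-crossings of each pipe changes by an even number (so the signs in $w_D$ are preserved). Together (i) and (ii) give $w_{L_{i,j}(D)} = w_D$, and the inverse ladder move case follows automatically.

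The main obstacle will be the bookkeeping in the case $q_\ell = 2n-j$, where the slide straddles the ``left'' column $j$ and the ``right'' column $2n-j$ of $D$ simultaneously. Under $\Omega$, these two columns land in different columns of $\widetilde{SY}_n$, but the reversal used for the even rows of $\widetilde{SY}_n$ conspires so that the two halves of the block still match up correctly at column $n$ of $\widetilde{SY}_n^{\rm (ex)}$, where the boundary elbow-joint convention at row index $n{+}1$ plays a crucial role. Verifying that these boundary joints make the local picture behave identically to the classical ladder-move picture — in particular that no spurious sign flip or rerouting occurs at the reflection — is the one nontrivial check; once it is done, the remaining subcases are an easy visual inspection analogous to Bergeron--Billey's original argument.
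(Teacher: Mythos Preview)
Your proposal is correct and follows essentially the same approach as the paper: a local diagrammatic case analysis of how the ladder move alters $\mathscr{G}(D)$, checking that the pipe connectivity (hence $v_D$) and the self-crossing parities are preserved. The paper organizes the cases as $j=n$ versus $j\neq n$ (the latter split by whether the new $+$ lands on the same side of the middle or the opposite side), which is just a relabeling of your four subcases, and your identified ``main obstacle'' is exactly the paper's boundary case handled via the reflection in $\Omega$.
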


\begin{proof}
We prove the assertion only for the ladder move $L_{i,j}(D)$; a proof of the assertion for $L_{i,j}^{-1}(D)$ is similar. 
If $j = n$, the diagram $\mathscr{G}(L_{i, j}(D))$ of the ladder move $L_{i, j}(D)$ is obtained from $\mathscr{G}(D)$ as in Figure \ref{fig:ladder_move_path_1}. 
This implies the assertion when $j = n$. 

Let $j \neq n$. 
We prove the assertion only when $j < n$; a proof of the assertion when $j > n$ is similar. 
In this case, the diagram $\mathscr{G}(L_{i, j}(D))$ is obtained from $\mathscr{G}(D)$ as in Figure \ref{fig:ladder_move_path_2}, where we write $L_{i, j}(D) = D \cup \{(p, q+1)\} \setminus \{(i, j)\}$. 
Hence we know the assertion when $j < n$. 
This completes the proof of the lemma. 
\begin{figure}[h]
\begin{center}
\scalebox{0.2}{
	\begin{tikzpicture}

  \fill (8,-2) coordinate (1) node[right=3pt]  {};
  \fill (8,-6) coordinate (2) node[right=3pt]  {};
  \fill (10,-4) coordinate (3) node[right=3pt]  {};
  \fill (9,-7) coordinate (4) node[right=3pt]  {};
  \fill (10,-7) coordinate (5) node[right=3pt]  {};
  \fill (11,-6) coordinate (6) node[right=3pt]  {};
  \fill (11,-5) coordinate (7) node[right=3pt]  {};
   \fill (6,5) coordinate (8) node[right=3pt]  {};
  \fill (8,10) coordinate (9) node[right=3pt]  {};
 \fill (6,-4) coordinate (10) node[right=3pt]  {};
 \fill (6,-7) coordinate (11) node[right=3pt]  {};
 \fill (7,-7) coordinate (12) node[right=3pt]  {};
 \fill (8,-8) coordinate (13) node[right=3pt]  {};
 \fill (8,-9) coordinate (14) node[right=3pt]  {};
   \fill (8,9) coordinate (15) node[above=3pt]  {};
   \fill (6,8) coordinate (16) node[right=3pt]  {};
   \fill (7,8) coordinate (17) node[right=3pt]  {};
   \fill (10,2) coordinate (18) node[right=3pt]  {};
   \fill (8,4) coordinate (19) node[right=3pt]  {};
   \fill (6,-1) coordinate (20) node[right=3pt]  {};
   \fill (8,7) coordinate (21) node[right=3pt]  {};
   \fill (8,6) coordinate (22) node[right=3pt]  {};
   \fill (9,8) coordinate (23) node[right=3pt]  {};
   \fill (10,8) coordinate (24) node[right=3pt]  {};
   \fill (9,5) coordinate (25) node[right=3pt]  {};
   \fill (10,5) coordinate (26) node[right=3pt]  {};
   \fill (11,7) coordinate (27) node[right=3pt]  {};
   \fill (11,6) coordinate (28) node[right=3pt]  {};
   \fill (7,5) coordinate (29) node[right=3pt]  {};
   \fill (6,2) coordinate (30) node[right=3pt]  {};
   \fill (10,-1) coordinate (31) node[right=3pt]  {};
   \fill (11,0) coordinate (32) node[right=3pt]  {};
   \fill (11,1) coordinate (33) node[right=3pt]  {};

	\draw[ultra thick] (21)--(22);
	\draw[ultra thick] (23)--(24);
	\draw[ultra thick] (25)--(26);
	\draw[ultra thick] (27)--(28);
	\draw[ultra thick] (29)--(8);

	\draw[ultra thick] (19)--(1);
	\draw[ultra thick] (18)--(30);
	\draw[ultra thick] (31)--(20);
	\draw[ultra thick] (32)--(33);

	\draw[ultra thick] (16)--(17);
	\draw[ultra thick] (10)--(3);
	\draw[ultra thick] (11)--(12);
	\draw[ultra thick] (1)--(2);
	\draw[ultra thick] (4)--(5);
	\draw[ultra thick] (7)--(6);
	\draw[ultra thick] (15)--(9);
	\draw[ultra thick] (13)--(14);

\draw[ultra thick] ([shift={(9,7)}]90:1.0) arc[radius=1.0, start angle=90, end angle= 180];
\draw[ultra thick] ([shift={(8,5)}]90:1.0) arc[radius=1.0, start angle=180, end angle= 270];
\draw[ultra thick] ([shift={(11,6)}]90:1.0) arc[radius=1.0, start angle=0, end angle= 90];
\draw[ultra thick] ([shift={(10,4)}]90:1.0) arc[radius=1.0, start angle=270, end angle= 360];
\draw[ultra thick] ([shift={(11,0)}]90:1.0) arc[radius=1.0, start angle=0, end angle= 90];
\draw[ultra thick] ([shift={(10,-2)}]90:1.0) arc[radius=1.0, start angle=270, end angle= 360];
\draw[ultra thick] ([shift={(8,3)}]90:1.0) arc[radius=1.0, start angle=0, end angle= 90];
\draw[ultra thick] ([shift={(7,7)}]90:1.0) arc[radius=1.0, start angle=270, end angle= 360];
\draw[ultra thick] ([shift={(8,-9)}]90:1.0) arc[radius=1.0, start angle=0, end angle= 90];
\draw[ultra thick] ([shift={(11,-6)}]90:1.0) arc[radius=1.0, start angle=0, end angle= 90];
\draw[ultra thick] ([shift={(8,-7)}]90:1.0) arc[radius=1.0, start angle=180, end angle= 270];
\draw[ultra thick] ([shift={(10,-8)}]90:1.0) arc[radius=1.0, start angle=270, end angle= 360];

\draw[ultra thick] [arrows = {|[scale=2]-Stealth[scale=2]}] (12.4,0) -- (14.4,0);
	\end{tikzpicture}
\hspace{10mm}
	\begin{tikzpicture}

  \fill (8,-2) coordinate (1) node[right=3pt]  {};
  \fill (8,-6) coordinate (2) node[right=3pt]  {};
  \fill (10,-4) coordinate (3) node[right=3pt]  {};
  \fill (9,-7) coordinate (4) node[right=3pt]  {};
  \fill (10,-7) coordinate (5) node[right=3pt]  {};
  \fill (11,-6) coordinate (6) node[right=3pt]  {};
  \fill (11,-5) coordinate (7) node[right=3pt]  {};
   \fill (6,5) coordinate (8) node[right=3pt]  {};
  \fill (8,10) coordinate (9) node[right=3pt]  {};
 \fill (6,-4) coordinate (10) node[right=3pt]  {};
 \fill (6,-7) coordinate (11) node[right=3pt]  {};
 \fill (7,-7) coordinate (12) node[right=3pt]  {};
 \fill (8,-8) coordinate (13) node[right=3pt]  {};
 \fill (8,-9) coordinate (14) node[right=3pt]  {};
   \fill (8,9) coordinate (15) node[above=3pt]  {};
   \fill (6,8) coordinate (16) node[right=3pt]  {};
   \fill (7,8) coordinate (17) node[right=3pt]  {};
   \fill (10,2) coordinate (18) node[right=3pt]  {};
   \fill (8,4) coordinate (19) node[right=3pt]  {};
   \fill (6,-1) coordinate (20) node[right=3pt]  {};
   \fill (8,7) coordinate (21) node[right=3pt]  {};
   \fill (8,6) coordinate (22) node[right=3pt]  {};
   \fill (9,8) coordinate (23) node[right=3pt]  {};
   \fill (10,8) coordinate (24) node[right=3pt]  {};
   \fill (9,5) coordinate (25) node[right=3pt]  {};
   \fill (10,5) coordinate (26) node[right=3pt]  {};
   \fill (11,7) coordinate (27) node[right=3pt]  {};
   \fill (11,6) coordinate (28) node[right=3pt]  {};
   \fill (7,5) coordinate (29) node[right=3pt]  {};
   \fill (6,2) coordinate (30) node[right=3pt]  {};
   \fill (10,-1) coordinate (31) node[right=3pt]  {};
   \fill (11,0) coordinate (32) node[right=3pt]  {};
   \fill (11,1) coordinate (33) node[right=3pt]  {};
  \fill (9,-4) coordinate (34) node[right=3pt]  {};
  \fill (7,-4) coordinate (35) node[right=3pt]  {};
  \fill (8,-3) coordinate (36) node[right=3pt]  {};
  \fill (8,-5) coordinate (37) node[right=3pt]  {};

	\draw[ultra thick] (21)--(22);
	\draw[ultra thick] (23)--(24);
	\draw[ultra thick] (25)--(26);
	\draw[ultra thick] (27)--(28);
	\draw[ultra thick] (29)--(8);
	\draw[ultra thick] (25)--(29);

	\draw[ultra thick] (19)--(22);
	\draw[ultra thick] (19)--(1);
	\draw[ultra thick] (18)--(30);
	\draw[ultra thick] (31)--(20);
	\draw[ultra thick] (32)--(33);

	\draw[ultra thick] (16)--(17);
	\draw[ultra thick] (34)--(3);
	\draw[ultra thick] (10)--(35);
	\draw[ultra thick] (11)--(12);
	\draw[ultra thick] (1)--(36);
	\draw[ultra thick] (37)--(2);
	\draw[ultra thick] (4)--(5);
	\draw[ultra thick] (7)--(6);
	\draw[ultra thick] (15)--(9);
	\draw[ultra thick] (13)--(14);

\draw[ultra thick] ([shift={(9,7)}]90:1.0) arc[radius=1.0, start angle=90, end angle= 180];
\draw[ultra thick] ([shift={(11,6)}]90:1.0) arc[radius=1.0, start angle=0, end angle= 90];
\draw[ultra thick] ([shift={(10,4)}]90:1.0) arc[radius=1.0, start angle=270, end angle= 360];
\draw[ultra thick] ([shift={(11,0)}]90:1.0) arc[radius=1.0, start angle=0, end angle= 90];
\draw[ultra thick] ([shift={(10,-2)}]90:1.0) arc[radius=1.0, start angle=270, end angle= 360];
\draw[ultra thick] ([shift={(7,7)}]90:1.0) arc[radius=1.0, start angle=270, end angle= 360];
\draw[ultra thick] ([shift={(8,-9)}]90:1.0) arc[radius=1.0, start angle=0, end angle= 90];
\draw[ultra thick] ([shift={(11,-6)}]90:1.0) arc[radius=1.0, start angle=0, end angle= 90];
\draw[ultra thick] ([shift={(8,-7)}]90:1.0) arc[radius=1.0, start angle=180, end angle= 270];
\draw[ultra thick] ([shift={(10,-8)}]90:1.0) arc[radius=1.0, start angle=270, end angle= 360];
\draw[ultra thick] ([shift={(9,-5)}]90:1.0) arc[radius=1.0, start angle=90, end angle= 180];
\draw[ultra thick] ([shift={(7,-5)}]90:1.0) arc[radius=1.0, start angle=270, end angle= 360];

	\end{tikzpicture}
}
\end{center}
\caption{\label{fig:ladder_move_path_1} The ladder move $D \mapsto L_{i,j}(D)$ when $j = n$.}
\end{figure}
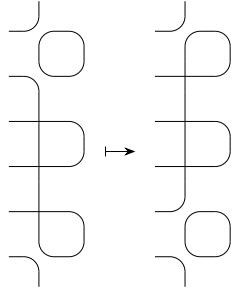
\begin{figure}[h]
\centering
\begin{minipage}[b]{0.4\linewidth}
\begin{center}
\scalebox{0.25}{
	\begin{tikzpicture}

   \fill (8,-2) coordinate (1) node[right=3pt]  {};
  \fill (8,-6) coordinate (2) node[right=3pt]  {};
  \fill (10,-4) coordinate (3) node[right=3pt]  {};
  \fill (13,-4) coordinate (4) node[right=3pt]  {};
  \fill (12,-4) coordinate (5) node[right=3pt]  {};
  \fill (11,-6) coordinate (6) node[right=3pt]  {};
  \fill (11,-5) coordinate (7) node[right=3pt]  {};
   \fill (11,3) coordinate (8) node[right=3pt]  {};
  \fill (11,1) coordinate (9) node[right=3pt]  {};
 \fill (6,-4) coordinate (10) node[right=3pt]  {};
 \fill (11,-3) coordinate (11) node[right=3pt]  {};
 \fill (11,4) coordinate (12) node[right=3pt]  {};
 \fill (9,-4) coordinate (13) node[right=3pt]  {};
 \fill (7,-4) coordinate (14) node[right=3pt]  {};
   \fill (8,-3) coordinate (15) node[above=3pt]  {};
  \fill (8,-5) coordinate (16) node[right=3pt]  {};
   \fill (13,2) coordinate (18) node[right=3pt]  {};
   \fill (8,4) coordinate (19) node[right=3pt]  {};
   \fill (6,-1) coordinate (20) node[right=3pt]  {};
   \fill (8,3) coordinate (21) node[right=3pt]  {};
   \fill (8,1) coordinate (22) node[right=3pt]  {};
   \fill (7,2) coordinate (23) node[right=3pt]  {};
   \fill (9,2) coordinate (24) node[right=3pt]  {};
   \fill (12,2) coordinate (25) node[right=3pt]  {};
   \fill (10,2) coordinate (26) node[right=3pt]  {};
   \fill (6,2) coordinate (30) node[right=3pt]  {};
   \fill (13,-1) coordinate (31) node[right=3pt]  {};

	\draw[ultra thick] (19)--(21);
	\draw[ultra thick] (22)--(1);
	\draw[ultra thick] (23)--(30);
	\draw[ultra thick] (18)--(25);
	\draw[ultra thick] (26)--(24);
	\draw[ultra thick] (31)--(20);

	\draw[ultra thick] (13)--(3);
	\draw[ultra thick] (10)--(13);
	\draw[ultra thick] (8)--(12);
	\draw[ultra thick] (11)--(9);
	\draw[ultra thick] (1)--(15);
	\draw[ultra thick] (15)--(2);
	\draw[ultra thick] (4)--(5);
	\draw[ultra thick] (7)--(6);

\draw[ultra thick] ([shift={(9,1)}]90:1.0) arc[radius=1.0, start angle=90, end angle= 180];
\draw[ultra thick] ([shift={(7,1)}]90:1.0) arc[radius=1.0, start angle=270, end angle= 360];
\draw[ultra thick] ([shift={(12,1)}]90:1.0) arc[radius=1.0, start angle=90, end angle= 180];
\draw[ultra thick] ([shift={(10,1)}]90:1.0) arc[radius=1.0, start angle=270, end angle= 360];
\draw[ultra thick] ([shift={(10,-5)}]90:1.0) arc[radius=1.0, start angle=270, end angle= 360];
\draw[ultra thick] ([shift={(12,-5)}]90:1.0) arc[radius=1.0, start angle=90, end angle= 180];

\draw[ultra thick] [arrows = {|[scale=2]-Stealth[scale=2]}] (14.4,-1) -- (16.4,-1);
	\end{tikzpicture}
\hspace{10mm}
	\begin{tikzpicture}

  \fill (8,-2) coordinate (1) node[right=3pt]  {};
  \fill (8,-6) coordinate (2) node[right=3pt]  {};
  \fill (10,-4) coordinate (3) node[right=3pt]  {};
  \fill (13,-4) coordinate (4) node[right=3pt]  {};
  \fill (12,-4) coordinate (5) node[right=3pt]  {};
  \fill (11,-6) coordinate (6) node[right=3pt]  {};
  \fill (11,-5) coordinate (7) node[right=3pt]  {};
 \fill (6,-4) coordinate (10) node[right=3pt]  {};
 \fill (11,-3) coordinate (11) node[right=3pt]  {};
 \fill (11,4) coordinate (12) node[right=3pt]  {};
 \fill (9,-4) coordinate (13) node[right=3pt]  {};
 \fill (7,-4) coordinate (14) node[right=3pt]  {};
   \fill (8,-3) coordinate (15) node[above=3pt]  {};
  \fill (8,-5) coordinate (16) node[right=3pt]  {};
   \fill (13,2) coordinate (18) node[right=3pt]  {};
   \fill (8,4) coordinate (19) node[right=3pt]  {};
   \fill (6,-1) coordinate (20) node[right=3pt]  {};
   \fill (8,3) coordinate (21) node[right=3pt]  {};
   \fill (8,1) coordinate (22) node[right=3pt]  {};
   \fill (7,2) coordinate (23) node[right=3pt]  {};
   \fill (9,2) coordinate (24) node[right=3pt]  {};
   \fill (6,2) coordinate (30) node[right=3pt]  {};
   \fill (13,-1) coordinate (31) node[right=3pt]  {};

	\draw[ultra thick] (19)--(21);
	\draw[ultra thick] (22)--(1);
	\draw[ultra thick] (23)--(30);
	\draw[ultra thick] (18)--(24);
	\draw[ultra thick] (31)--(20);
	\draw[ultra thick] (13)--(3);
	\draw[ultra thick] (10)--(14);
	\draw[ultra thick] (11)--(12);
	\draw[ultra thick] (1)--(15);
	\draw[ultra thick] (16)--(2);
	\draw[ultra thick] (4)--(5);
	\draw[ultra thick] (7)--(6);

\draw[ultra thick] ([shift={(9,1)}]90:1.0) arc[radius=1.0, start angle=90, end angle= 180];
\draw[ultra thick] ([shift={(7,1)}]90:1.0) arc[radius=1.0, start angle=270, end angle= 360];
\draw[ultra thick] ([shift={(10,-5)}]90:1.0) arc[radius=1.0, start angle=270, end angle= 360];
\draw[ultra thick] ([shift={(12,-5)}]90:1.0) arc[radius=1.0, start angle=90, end angle= 180];
\draw[ultra thick] ([shift={(7,-5)}]90:1.0) arc[radius=1.0, start angle=270, end angle= 360];
\draw[ultra thick] ([shift={(9,-5)}]90:1.0) arc[radius=1.0, start angle=90, end angle= 180];

	\end{tikzpicture}
}
\end{center}
\subcaption{The case $q < n$.}
\end{minipage}
\begin{minipage}[b]{0.4\linewidth}
\begin{center}
\scalebox{0.25}{
	\begin{tikzpicture}

  \fill (8,-2) coordinate (1) node[right=3pt]  {};
  \fill (8,-6) coordinate (2) node[right=3pt]  {};
  \fill (10,-4) coordinate (3) node[right=3pt]  {};
  \fill (13,-4) coordinate (4) node[right=3pt]  {};
  \fill (12,-4) coordinate (5) node[right=3pt]  {};
  \fill (11,-6) coordinate (6) node[right=3pt]  {};
  \fill (11,-5) coordinate (7) node[right=3pt]  {};
   \fill (6,5) coordinate (8) node[right=3pt]  {};
 \fill (6,-4) coordinate (10) node[right=3pt]  {};
 \fill (11,-3) coordinate (11) node[right=3pt]  {};
 \fill (11,4) coordinate (12) node[right=3pt]  {};
   \fill (13,2) coordinate (18) node[right=3pt]  {};
   \fill (8,4) coordinate (19) node[right=3pt]  {};
   \fill (6,-1) coordinate (20) node[right=3pt]  {};
   \fill (8,7) coordinate (21) node[right=3pt]  {};
   \fill (8,6) coordinate (22) node[right=3pt]  {};
   \fill (12,5) coordinate (23) node[right=3pt]  {};
   \fill (13,5) coordinate (24) node[right=3pt]  {};
   \fill (9,5) coordinate (25) node[right=3pt]  {};
   \fill (10,5) coordinate (26) node[right=3pt]  {};
   \fill (11,7) coordinate (27) node[right=3pt]  {};
   \fill (11,6) coordinate (28) node[right=3pt]  {};
   \fill (7,5) coordinate (29) node[right=3pt]  {};
   \fill (6,2) coordinate (30) node[right=3pt]  {};
   \fill (13,-1) coordinate (31) node[right=3pt]  {};

	\draw[ultra thick] (21)--(22);
	\draw[ultra thick] (23)--(24);
	\draw[ultra thick] (25)--(26);
	\draw[ultra thick] (27)--(28);
	\draw[ultra thick] (29)--(8);
	\draw[ultra thick] (19)--(1);
	\draw[ultra thick] (18)--(30);
	\draw[ultra thick] (31)--(20);
	\draw[ultra thick] (10)--(3);
	\draw[ultra thick] (11)--(12);
	\draw[ultra thick] (1)--(2);
	\draw[ultra thick] (4)--(5);
	\draw[ultra thick] (7)--(6);

\draw[ultra thick] ([shift={(8,5)}]90:1.0) arc[radius=1.0, start angle=180, end angle= 270];
\draw[ultra thick] ([shift={(11,5)}]90:1.0) arc[radius=1.0, start angle=180, end angle= 270];
\draw[ultra thick] ([shift={(11,3)}]90:1.0) arc[radius=1.0, start angle=0, end angle= 90];
\draw[ultra thick] ([shift={(8,3)}]90:1.0) arc[radius=1.0, start angle=0, end angle= 90];
\draw[ultra thick] ([shift={(10,-5)}]90:1.0) arc[radius=1.0, start angle=270, end angle= 360];
\draw[ultra thick] ([shift={(12,-5)}]90:1.0) arc[radius=1.0, start angle=90, end angle= 180];

\draw[ultra thick] [arrows = {|[scale=2]-Stealth[scale=2]}] (14.4,0) -- (16.4,0);
	\end{tikzpicture}
\hspace{10mm}
	\begin{tikzpicture}

  \fill (8,-2) coordinate (1) node[right=3pt]  {};
  \fill (8,-6) coordinate (2) node[right=3pt]  {};
  \fill (10,-4) coordinate (3) node[right=3pt]  {};
  \fill (13,-4) coordinate (4) node[right=3pt]  {};
  \fill (12,-4) coordinate (5) node[right=3pt]  {};
  \fill (11,-6) coordinate (6) node[right=3pt]  {};
  \fill (11,-5) coordinate (7) node[right=3pt]  {};
   \fill (6,5) coordinate (8) node[right=3pt]  {};
 \fill (6,-4) coordinate (10) node[right=3pt]  {};
 \fill (11,-3) coordinate (11) node[right=3pt]  {};
 \fill (11,4) coordinate (12) node[right=3pt]  {};
 \fill (9,-4) coordinate (13) node[right=3pt]  {};
 \fill (7,-4) coordinate (14) node[right=3pt]  {};
   \fill (8,-3) coordinate (15) node[above=3pt]  {};
  \fill (8,-5) coordinate (16) node[right=3pt]  {};
   \fill (13,2) coordinate (18) node[right=3pt]  {};
   \fill (8,4) coordinate (19) node[right=3pt]  {};
   \fill (6,-1) coordinate (20) node[right=3pt]  {};
   \fill (8,7) coordinate (21) node[right=3pt]  {};
   \fill (8,6) coordinate (22) node[right=3pt]  {};
   \fill (12,5) coordinate (23) node[right=3pt]  {};
   \fill (13,5) coordinate (24) node[right=3pt]  {};
   \fill (9,5) coordinate (25) node[right=3pt]  {};
   \fill (10,5) coordinate (26) node[right=3pt]  {};
   \fill (11,7) coordinate (27) node[right=3pt]  {};
   \fill (11,6) coordinate (28) node[right=3pt]  {};
   \fill (7,5) coordinate (29) node[right=3pt]  {};
   \fill (6,2) coordinate (30) node[right=3pt]  {};
   \fill (13,-1) coordinate (31) node[right=3pt]  {};

	\draw[ultra thick] (21)--(22);
	\draw[ultra thick] (23)--(24);
	\draw[ultra thick] (25)--(26);
	\draw[ultra thick] (27)--(28);
	\draw[ultra thick] (29)--(8);

	\draw[ultra thick] (19)--(1);
	\draw[ultra thick] (18)--(30);
	\draw[ultra thick] (31)--(20);

	\draw[ultra thick] (19)--(22);
	\draw[ultra thick] (13)--(3);
	\draw[ultra thick] (10)--(14);
	\draw[ultra thick] (11)--(12);
	\draw[ultra thick] (1)--(15);
	\draw[ultra thick] (16)--(2);
	\draw[ultra thick] (4)--(5);
	\draw[ultra thick] (7)--(6);
	\draw[ultra thick] (25)--(29);

\draw[ultra thick] ([shift={(11,5)}]90:1.0) arc[radius=1.0, start angle=180, end angle= 270];
\draw[ultra thick] ([shift={(11,3)}]90:1.0) arc[radius=1.0, start angle=0, end angle= 90];
\draw[ultra thick] ([shift={(10,-5)}]90:1.0) arc[radius=1.0, start angle=270, end angle= 360];
\draw[ultra thick] ([shift={(12,-5)}]90:1.0) arc[radius=1.0, start angle=90, end angle= 180];
\draw[ultra thick] ([shift={(7,-5)}]90:1.0) arc[radius=1.0, start angle=270, end angle= 360];
\draw[ultra thick] ([shift={(9,-5)}]90:1.0) arc[radius=1.0, start angle=90, end angle= 180];

	\end{tikzpicture}
}
\end{center}
\subcaption{The case $q \geq n$.}
\end{minipage}
\caption{\label{fig:ladder_move_path_2} The ladder move $D \mapsto L_{i,j}(D)$ when $j < n$.}
\end{figure}
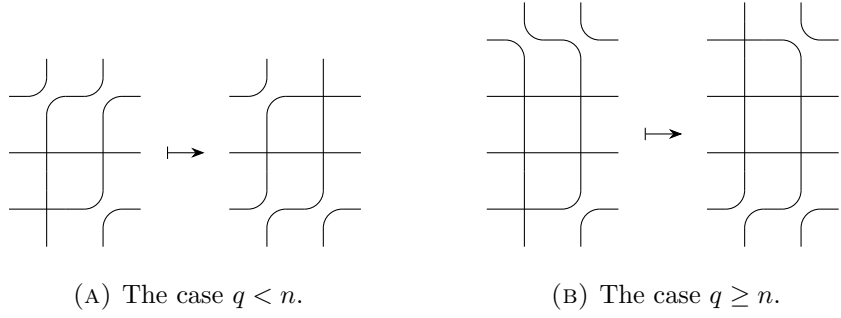
\end{proof}

Let $RSP(w)$ denote the set of reduced skew pipe dreams $D$ such that $w_D = w$. 
Since ladder moves and inverse ladder moves do not change the cardinality of skew pipe dreams, we obtain the following by \cref{l:ladder_move_of_path_model}. 

\begin{lem}\label{l:stable_ladder_RSP}
For $w \in W$, the set $RSP(w)$ is stable under ladder moves and inverse ladder moves.
\end{lem}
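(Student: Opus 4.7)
The plan is to combine the two ingredients already at hand: \cref{l:ladder_move_of_path_model} tells us that ladder moves and inverse ladder moves preserve the signed permutation $w_D$, and an inspection of the definitions of $L_{i,j}$ and $L_{i,j}^{-1}$ shows that these operations preserve cardinality (each replaces one element of the skew pipe dream by another). Combining these two invariants will immediately yield that $RSP(w)$ is closed under both operations.

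More precisely, I would argue as follows. Take $D \in RSP(w)$, so that $w_D = w$ and $|D| = \ell(w_0) - \ell(w)$. Suppose the ladder move $L_{i,j}(D)$ is defined. From its definition, $L_{i,j}(D) = D \cup \{(p_\ell, q_\ell+1)\} \setminus \{(i,j)\}$, where the added box does not lie in $D$ and the removed box does; hence
\[|L_{i,j}(D)| = |D| = \ell(w_0) - \ell(w).\]
By \cref{l:ladder_move_of_path_model}, $w_{L_{i,j}(D)} = w_D = w$, so the cardinality condition becomes $|L_{i,j}(D)| = \ell(w_0) - \ell(w_{L_{i,j}(D)})$, i.e.\ $L_{i,j}(D)$ is reduced with $w_{L_{i,j}(D)} = w$. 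Thus $L_{i,j}(D) \in RSP(w)$. The argument for the inverse ladder move $L_{i,j}^{-1}(D)$ is identical, using the second half of \cref{l:ladder_move_of_path_model}.

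There is no real obstacle here, since the lemma is essentially a bookkeeping consequence of two facts already recorded: the invariance of $w_D$ under (inverse) ladder moves, and the trivial observation that such moves exchange one box for another and therefore do not alter $|D|$. The only thing worth emphasising in the write-up is that the definition of ``reduced'' depends on $D$ only through the pair $(|D|, w_D)$, so any operation that preserves both invariants automatically preserves reducedness.
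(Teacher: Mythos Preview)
Your proof is correct and follows exactly the paper's own argument: the paper simply observes that ladder moves and inverse ladder moves do not change the cardinality of a skew pipe dream and then invokes \cref{l:ladder_move_of_path_model}. You have spelled out the same reasoning in slightly more detail.
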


Now we show \cref{t:main_result_1}. 

\begin{proof}[{Proof of \cref{t:main_result_1}}]
We see by Lemmas \ref{l:reduced_D(w)} and \ref{l:stable_ladder_RSP} that $\mathscr{L}(D(w)) \subseteq RSP(w)$. 
Hence it suffices to show that $RSP(w) \subseteq \mathscr{L}(D(w))$. 
Take $D \in RSP(w)$. 
If there is no $(i,j) \in D$ such that $(i,j-1) \in SY_n \setminus D$, then the argument in the proof of \cref{l:reduced_D(w)} implies that $D = D(w)$.
Hence we may assume that there exists $(i,j) \in D$ such that $(i,j-1) \in SY_n \setminus D$. 
Take $1 \leq k \leq N$ to be the minimum positive integer such that $(p_k,q_k) \in SY_n \setminus D$ and such that $(p_k,q_k+1) \in D$.
Let us prove that the inverse ladder move $L_{(p_k,q_k+1)}^{-1}(D)$ is defined. 
We take two cases: $q_k = n$ or $q_k \neq n$.

If $q_k = n$, then set 
\[t \coloneqq \min\{p_k < t^\prime \leq n \mid (t^\prime, n+1) \notin D\}.\] 
By the definitions of $k$ and $t$, it follows that $(t^\prime, n), (t^\prime, n+1) \in D$ for all $p_k < t^\prime < t$. 
If $(t, n) \in D$, then $w_D = w_{D^\prime}$ for $D^\prime \coloneqq D \setminus \{(p_k, n+1), (t, n)\}$ (see Figure \ref{fig:non_reduced_1}), which gives a contradiction since $D$ is reduced. 
Hence we have $(t, n) \notin D$, which implies that the inverse ladder move $L_{(p_k,q_k+1)}^{-1}(D)$ is defined. 

Let $q_k \neq n$. 
We prove the assertion only when $q_k < n$; a proof of the assertion when $q_k > n$ is similar. 
Set 
\[k^\prime \coloneqq \max\{1 \leq \ell < k \mid q_\ell \in \{q_k, 2n-q_k\},\ (p_\ell, q_\ell+1) \notin D\}.\] 
By the definitions of $k$ and $k^\prime$, it follows that $(p_\ell, q_\ell), (p_\ell, q_\ell+1) \in D$ for all $k^\prime < \ell < k$ such that $q_\ell \in \{q_k, 2n-q_k\}$. 
If $(p_{k^\prime}, q_{k^\prime}) \in D$, then $w_D = w_{D^\prime}$ for $D^\prime \coloneqq D \setminus \{(p_k,q_k+1), (p_{k^\prime}, q_{k^\prime})\}$ (see Figure \ref{fig:non_reduced_2}), which gives a contradiction since $D$ is reduced. 
Hence we have $(p_{k^\prime}, q_{k^\prime}) \notin D$, which implies that the inverse ladder move $L_{(p_k,q_k+1)}^{-1}(D)$ is defined. 

Continue this argument by replacing $D$ with $L_{(p_k,q_k+1)}^{-1}(D)$.
Then, by a sequence of inverse ladder moves, we can change $D$ to $\widetilde{D}$ such that there is no $(i,j) \in \widetilde{D}$ with $(i,j-1) \in SY_n \setminus \widetilde{D}$. 
Since inverse ladder moves preserve $RSP(w)$ by \cref{l:stable_ladder_RSP}, we have $\widetilde{D} \in RSP(w)$, which implies that $\widetilde{D} = D(w)$. 
Hence $D$ is obtained from $D(w)$ by a sequence of ladder moves, that is, $D \in \mathscr{L}(D(w))$.
This proves the theorem.
\begin{figure}[h]
\begin{center}
\scalebox{0.2}{
	\begin{tikzpicture}

  \fill (8,-2) coordinate (1) node[right=3pt]  {};
  \fill (8,-6) coordinate (2) node[right=3pt]  {};
  \fill (10,-4) coordinate (3) node[right=3pt]  {};
  \fill (9,-7) coordinate (4) node[right=3pt]  {};
  \fill (10,-7) coordinate (5) node[right=3pt]  {};
  \fill (11,-6) coordinate (6) node[right=3pt]  {};
  \fill (11,-5) coordinate (7) node[right=3pt]  {};
   \fill (6,5) coordinate (8) node[right=3pt]  {};
  \fill (8,10) coordinate (9) node[right=3pt]  {};
 \fill (6,-4) coordinate (10) node[right=3pt]  {};
 \fill (6,-7) coordinate (11) node[right=3pt]  {};
 \fill (7,-7) coordinate (12) node[right=3pt]  {};
 \fill (8,-8) coordinate (13) node[right=3pt]  {};
 \fill (8,-9) coordinate (14) node[right=3pt]  {};
   \fill (8,9) coordinate (15) node[above=3pt]  {};
   \fill (6,8) coordinate (16) node[right=3pt]  {};
   \fill (7,8) coordinate (17) node[right=3pt]  {};
   \fill (10,2) coordinate (18) node[right=3pt]  {};
   \fill (8,4) coordinate (19) node[right=3pt]  {};
   \fill (6,-1) coordinate (20) node[right=3pt]  {};
   \fill (8,7) coordinate (21) node[right=3pt]  {};
   \fill (8,6) coordinate (22) node[right=3pt]  {};
   \fill (9,8) coordinate (23) node[right=3pt]  {};
   \fill (10,8) coordinate (24) node[right=3pt]  {};
   \fill (9,5) coordinate (25) node[right=3pt]  {};
   \fill (10,5) coordinate (26) node[right=3pt]  {};
   \fill (11,7) coordinate (27) node[right=3pt]  {};
   \fill (11,6) coordinate (28) node[right=3pt]  {};
   \fill (7,5) coordinate (29) node[right=3pt]  {};
   \fill (6,2) coordinate (30) node[right=3pt]  {};
   \fill (10,-1) coordinate (31) node[right=3pt]  {};
   \fill (11,0) coordinate (32) node[right=3pt]  {};
   \fill (11,1) coordinate (33) node[right=3pt]  {};

	\draw[ultra thick] (21)--(19);
	\draw[ultra thick] (23)--(24);
	\draw[ultra thick] (25)--(26);
	\draw[ultra thick] (27)--(28);
	\draw[ultra thick] (25)--(8);

	\draw[ultra thick] (19)--(1);
	\draw[ultra thick] (18)--(30);
	\draw[ultra thick] (31)--(20);
	\draw[ultra thick] (32)--(33);

	\draw[ultra thick] (16)--(17);
	\draw[ultra thick] (10)--(3);
	\draw[ultra thick] (11)--(12);
	\draw[ultra thick] (1)--(2);
	\draw[ultra thick] (4)--(5);
	\draw[ultra thick] (7)--(6);
	\draw[ultra thick] (15)--(9);
	\draw[ultra thick] (13)--(14);

\draw[ultra thick] ([shift={(9,7)}]90:1.0) arc[radius=1.0, start angle=90, end angle= 180];
\draw[ultra thick] ([shift={(11,6)}]90:1.0) arc[radius=1.0, start angle=0, end angle= 90];
\draw[ultra thick] ([shift={(10,4)}]90:1.0) arc[radius=1.0, start angle=270, end angle= 360];
\draw[ultra thick] ([shift={(11,0)}]90:1.0) arc[radius=1.0, start angle=0, end angle= 90];
\draw[ultra thick] ([shift={(10,-2)}]90:1.0) arc[radius=1.0, start angle=270, end angle= 360];
\draw[ultra thick] ([shift={(7,7)}]90:1.0) arc[radius=1.0, start angle=270, end angle= 360];
\draw[ultra thick] ([shift={(8,-9)}]90:1.0) arc[radius=1.0, start angle=0, end angle= 90];
\draw[ultra thick] ([shift={(11,-6)}]90:1.0) arc[radius=1.0, start angle=0, end angle= 90];
\draw[ultra thick] ([shift={(8,-7)}]90:1.0) arc[radius=1.0, start angle=180, end angle= 270];
\draw[ultra thick] ([shift={(10,-8)}]90:1.0) arc[radius=1.0, start angle=270, end angle= 360];

\draw[ultra thick] [arrows = {|[scale=2]-Stealth[scale=2]}] (12.4,0) -- (14.4,0);
	\end{tikzpicture}
\hspace{10mm}
	\begin{tikzpicture}

  \fill (8,-2) coordinate (1) node[right=3pt]  {};
  \fill (8,-6) coordinate (2) node[right=3pt]  {};
  \fill (10,-4) coordinate (3) node[right=3pt]  {};
  \fill (9,-7) coordinate (4) node[right=3pt]  {};
  \fill (10,-7) coordinate (5) node[right=3pt]  {};
  \fill (11,-6) coordinate (6) node[right=3pt]  {};
  \fill (11,-5) coordinate (7) node[right=3pt]  {};
   \fill (6,5) coordinate (8) node[right=3pt]  {};
  \fill (8,10) coordinate (9) node[right=3pt]  {};
 \fill (6,-4) coordinate (10) node[right=3pt]  {};
 \fill (6,-7) coordinate (11) node[right=3pt]  {};
 \fill (7,-7) coordinate (12) node[right=3pt]  {};
 \fill (8,-8) coordinate (13) node[right=3pt]  {};
 \fill (8,-9) coordinate (14) node[right=3pt]  {};
   \fill (8,9) coordinate (15) node[above=3pt]  {};
   \fill (6,8) coordinate (16) node[right=3pt]  {};
   \fill (7,8) coordinate (17) node[right=3pt]  {};
   \fill (10,2) coordinate (18) node[right=3pt]  {};
   \fill (8,4) coordinate (19) node[right=3pt]  {};
   \fill (6,-1) coordinate (20) node[right=3pt]  {};
   \fill (8,7) coordinate (21) node[right=3pt]  {};
   \fill (8,6) coordinate (22) node[right=3pt]  {};
   \fill (9,8) coordinate (23) node[right=3pt]  {};
   \fill (10,8) coordinate (24) node[right=3pt]  {};
   \fill (9,5) coordinate (25) node[right=3pt]  {};
   \fill (10,5) coordinate (26) node[right=3pt]  {};
   \fill (11,7) coordinate (27) node[right=3pt]  {};
   \fill (11,6) coordinate (28) node[right=3pt]  {};
   \fill (7,5) coordinate (29) node[right=3pt]  {};
   \fill (6,2) coordinate (30) node[right=3pt]  {};
   \fill (10,-1) coordinate (31) node[right=3pt]  {};
   \fill (11,0) coordinate (32) node[right=3pt]  {};
   \fill (11,1) coordinate (33) node[right=3pt]  {};
  \fill (9,-4) coordinate (34) node[right=3pt]  {};
  \fill (7,-4) coordinate (35) node[right=3pt]  {};
  \fill (8,-3) coordinate (36) node[right=3pt]  {};
  \fill (8,-5) coordinate (37) node[right=3pt]  {};

	\draw[ultra thick] (21)--(22);
	\draw[ultra thick] (23)--(24);
	\draw[ultra thick] (25)--(26);
	\draw[ultra thick] (27)--(28);
	\draw[ultra thick] (29)--(8);

	\draw[ultra thick] (19)--(1);
	\draw[ultra thick] (18)--(30);
	\draw[ultra thick] (31)--(20);
	\draw[ultra thick] (32)--(33);

	\draw[ultra thick] (16)--(17);
	\draw[ultra thick] (34)--(3);
	\draw[ultra thick] (10)--(35);
	\draw[ultra thick] (11)--(12);
	\draw[ultra thick] (1)--(36);
	\draw[ultra thick] (37)--(2);
	\draw[ultra thick] (4)--(5);
	\draw[ultra thick] (7)--(6);
	\draw[ultra thick] (15)--(9);
	\draw[ultra thick] (13)--(14);

\draw[ultra thick] ([shift={(9,7)}]90:1.0) arc[radius=1.0, start angle=90, end angle= 180];
\draw[ultra thick] ([shift={(8,5)}]90:1.0) arc[radius=1.0, start angle=180, end angle= 270];
\draw[ultra thick] ([shift={(11,6)}]90:1.0) arc[radius=1.0, start angle=0, end angle= 90];
\draw[ultra thick] ([shift={(10,4)}]90:1.0) arc[radius=1.0, start angle=270, end angle= 360];
\draw[ultra thick] ([shift={(11,0)}]90:1.0) arc[radius=1.0, start angle=0, end angle= 90];
\draw[ultra thick] ([shift={(10,-2)}]90:1.0) arc[radius=1.0, start angle=270, end angle= 360];
\draw[ultra thick] ([shift={(8,3)}]90:1.0) arc[radius=1.0, start angle=0, end angle= 90];
\draw[ultra thick] ([shift={(7,7)}]90:1.0) arc[radius=1.0, start angle=270, end angle= 360];
\draw[ultra thick] ([shift={(8,-9)}]90:1.0) arc[radius=1.0, start angle=0, end angle= 90];
\draw[ultra thick] ([shift={(11,-6)}]90:1.0) arc[radius=1.0, start angle=0, end angle= 90];
\draw[ultra thick] ([shift={(8,-7)}]90:1.0) arc[radius=1.0, start angle=180, end angle= 270];
\draw[ultra thick] ([shift={(10,-8)}]90:1.0) arc[radius=1.0, start angle=270, end angle= 360];
\draw[ultra thick] ([shift={(9,-5)}]90:1.0) arc[radius=1.0, start angle=90, end angle= 180];
\draw[ultra thick] ([shift={(7,-5)}]90:1.0) arc[radius=1.0, start angle=270, end angle= 360];

	\end{tikzpicture}
}
\end{center}
\caption{\label{fig:non_reduced_1} The change $\mathscr{G}(D) \mapsto \mathscr{G}(D^\prime)$ when $q_k = n$.}
\end{figure}
\begin{figure}[h]
\centering
\begin{minipage}[b]{0.4\linewidth}
\begin{center}
\scalebox{0.25}{
	\begin{tikzpicture}

   \fill (8,-2) coordinate (1) node[right=3pt]  {};
  \fill (8,-6) coordinate (2) node[right=3pt]  {};
  \fill (10,-4) coordinate (3) node[right=3pt]  {};
  \fill (13,-4) coordinate (4) node[right=3pt]  {};
  \fill (12,-4) coordinate (5) node[right=3pt]  {};
  \fill (11,-6) coordinate (6) node[right=3pt]  {};
  \fill (11,-5) coordinate (7) node[right=3pt]  {};
   \fill (11,3) coordinate (8) node[right=3pt]  {};
  \fill (11,1) coordinate (9) node[right=3pt]  {};
 \fill (6,-4) coordinate (10) node[right=3pt]  {};
 \fill (11,-3) coordinate (11) node[right=3pt]  {};
 \fill (11,4) coordinate (12) node[right=3pt]  {};
 \fill (9,-4) coordinate (13) node[right=3pt]  {};
 \fill (7,-4) coordinate (14) node[right=3pt]  {};
   \fill (8,-3) coordinate (15) node[above=3pt]  {};
  \fill (8,-5) coordinate (16) node[right=3pt]  {};
   \fill (13,2) coordinate (18) node[right=3pt]  {};
   \fill (8,4) coordinate (19) node[right=3pt]  {};
   \fill (6,-1) coordinate (20) node[right=3pt]  {};
   \fill (8,3) coordinate (21) node[right=3pt]  {};
   \fill (8,1) coordinate (22) node[right=3pt]  {};
   \fill (7,2) coordinate (23) node[right=3pt]  {};
   \fill (9,2) coordinate (24) node[right=3pt]  {};
   \fill (12,2) coordinate (25) node[right=3pt]  {};
   \fill (10,2) coordinate (26) node[right=3pt]  {};
   \fill (6,2) coordinate (30) node[right=3pt]  {};
   \fill (13,-1) coordinate (31) node[right=3pt]  {};

	\draw[ultra thick] (19)--(21);
	\draw[ultra thick] (22)--(1);
	\draw[ultra thick] (23)--(30);
	\draw[ultra thick] (18)--(26);
	\draw[ultra thick] (26)--(24);
	\draw[ultra thick] (31)--(20);

	\draw[ultra thick] (13)--(3);
	\draw[ultra thick] (10)--(13);
	\draw[ultra thick] (9)--(12);
	\draw[ultra thick] (11)--(9);
	\draw[ultra thick] (1)--(15);
	\draw[ultra thick] (15)--(2);
	\draw[ultra thick] (4)--(5);
	\draw[ultra thick] (7)--(6);

\draw[ultra thick] ([shift={(9,1)}]90:1.0) arc[radius=1.0, start angle=90, end angle= 180];
\draw[ultra thick] ([shift={(7,1)}]90:1.0) arc[radius=1.0, start angle=270, end angle= 360];
\draw[ultra thick] ([shift={(10,-5)}]90:1.0) arc[radius=1.0, start angle=270, end angle= 360];
\draw[ultra thick] ([shift={(12,-5)}]90:1.0) arc[radius=1.0, start angle=90, end angle= 180];

\draw[ultra thick] [arrows = {|[scale=2]-Stealth[scale=2]}] (14.4,-1) -- (16.4,-1);
	\end{tikzpicture}
\hspace{10mm}
	\begin{tikzpicture}

  \fill (8,-2) coordinate (1) node[right=3pt]  {};
  \fill (8,-6) coordinate (2) node[right=3pt]  {};
  \fill (10,-4) coordinate (3) node[right=3pt]  {};
  \fill (13,-4) coordinate (4) node[right=3pt]  {};
  \fill (12,-4) coordinate (5) node[right=3pt]  {};
  \fill (11,-6) coordinate (6) node[right=3pt]  {};
  \fill (11,-5) coordinate (7) node[right=3pt]  {};
 \fill (6,-4) coordinate (10) node[right=3pt]  {};
 \fill (11,-3) coordinate (11) node[right=3pt]  {};
 \fill (11,4) coordinate (12) node[right=3pt]  {};
 \fill (9,-4) coordinate (13) node[right=3pt]  {};
 \fill (7,-4) coordinate (14) node[right=3pt]  {};
   \fill (8,-3) coordinate (15) node[above=3pt]  {};
  \fill (8,-5) coordinate (16) node[right=3pt]  {};
   \fill (13,2) coordinate (18) node[right=3pt]  {};
   \fill (8,4) coordinate (19) node[right=3pt]  {};
   \fill (6,-1) coordinate (20) node[right=3pt]  {};
   \fill (8,3) coordinate (21) node[right=3pt]  {};
   \fill (8,1) coordinate (22) node[right=3pt]  {};
   \fill (7,2) coordinate (23) node[right=3pt]  {};
   \fill (9,2) coordinate (24) node[right=3pt]  {};
   \fill (12,2) coordinate (25) node[right=3pt]  {};
   \fill (10,2) coordinate (26) node[right=3pt]  {};
   \fill (11,3) coordinate (27) node[right=3pt]  {};
  \fill (11,1) coordinate (28) node[right=3pt]  {};
   \fill (6,2) coordinate (30) node[right=3pt]  {};
   \fill (13,-1) coordinate (31) node[right=3pt]  {};

	\draw[ultra thick] (19)--(21);
	\draw[ultra thick] (22)--(1);
	\draw[ultra thick] (23)--(30);
	\draw[ultra thick] (18)--(25);
	\draw[ultra thick] (26)--(24);
	\draw[ultra thick] (31)--(20);

	\draw[ultra thick] (13)--(3);
	\draw[ultra thick] (10)--(14);
	\draw[ultra thick] (27)--(12);
	\draw[ultra thick] (11)--(28);
	\draw[ultra thick] (1)--(15);
	\draw[ultra thick] (16)--(2);
	\draw[ultra thick] (4)--(5);
	\draw[ultra thick] (7)--(6);

\draw[ultra thick] ([shift={(9,1)}]90:1.0) arc[radius=1.0, start angle=90, end angle= 180];
\draw[ultra thick] ([shift={(7,1)}]90:1.0) arc[radius=1.0, start angle=270, end angle= 360];
\draw[ultra thick] ([shift={(12,1)}]90:1.0) arc[radius=1.0, start angle=90, end angle= 180];
\draw[ultra thick] ([shift={(10,1)}]90:1.0) arc[radius=1.0, start angle=270, end angle= 360];
\draw[ultra thick] ([shift={(10,-5)}]90:1.0) arc[radius=1.0, start angle=270, end angle= 360];
\draw[ultra thick] ([shift={(12,-5)}]90:1.0) arc[radius=1.0, start angle=90, end angle= 180];
\draw[ultra thick] ([shift={(7,-5)}]90:1.0) arc[radius=1.0, start angle=270, end angle= 360];
\draw[ultra thick] ([shift={(9,-5)}]90:1.0) arc[radius=1.0, start angle=90, end angle= 180];

	\end{tikzpicture}
}
\end{center}
\subcaption{The case $q_{k^\prime} < n$.}
\end{minipage}
\begin{minipage}[b]{0.4\linewidth}
\begin{center}
\scalebox{0.25}{
	\begin{tikzpicture}

  \fill (8,-2) coordinate (1) node[right=3pt]  {};
  \fill (8,-6) coordinate (2) node[right=3pt]  {};
  \fill (10,-4) coordinate (3) node[right=3pt]  {};
  \fill (13,-4) coordinate (4) node[right=3pt]  {};
  \fill (8,-5) coordinate (5) node[right=3pt]  {};
  \fill (11,-6) coordinate (6) node[right=3pt]  {};
  \fill (8,-3) coordinate (7) node[right=3pt]  {};
   \fill (6,5) coordinate (8) node[right=3pt]  {};
 \fill (6,-4) coordinate (10) node[right=3pt]  {};
 \fill (11,-3) coordinate (11) node[right=3pt]  {};
 \fill (11,4) coordinate (12) node[right=3pt]  {};
   \fill (13,2) coordinate (18) node[right=3pt]  {};
   \fill (8,4) coordinate (19) node[right=3pt]  {};
   \fill (6,-1) coordinate (20) node[right=3pt]  {};
   \fill (8,7) coordinate (21) node[right=3pt]  {};
   \fill (8,6) coordinate (22) node[right=3pt]  {};
   \fill (13,5) coordinate (24) node[right=3pt]  {};
   \fill (9,5) coordinate (25) node[right=3pt]  {};
   \fill (11,7) coordinate (27) node[right=3pt]  {};
   \fill (7,5) coordinate (29) node[right=3pt]  {};
   \fill (6,2) coordinate (30) node[right=3pt]  {};
   \fill (13,-1) coordinate (31) node[right=3pt]  {};
  \fill (9,-4) coordinate (32) node[right=3pt]  {};
 \fill (7,-4) coordinate (33) node[right=3pt]  {};

	\draw[ultra thick] (21)--(22);
	\draw[ultra thick] (25)--(24);
	\draw[ultra thick] (27)--(12);
	\draw[ultra thick] (29)--(8);

	\draw[ultra thick] (19)--(1);
	\draw[ultra thick] (18)--(30);
	\draw[ultra thick] (31)--(20);

	\draw[ultra thick] (10)--(33);
	\draw[ultra thick] (11)--(12);
	\draw[ultra thick] (1)--(7);
	\draw[ultra thick] (5)--(2);
	\draw[ultra thick] (4)--(32);
	\draw[ultra thick] (11)--(6);

\draw[ultra thick] ([shift={(7,4)}]90:1.0) arc[radius=1.0, start angle=270, end angle= 360];
\draw[ultra thick] ([shift={(9,4)}]90:1.0) arc[radius=1.0, start angle=90, end angle= 180];
\draw[ultra thick] ([shift={(8,-4)}]90:1.0) arc[radius=1.0, start angle=180, end angle= 270];
\draw[ultra thick] ([shift={(8,-6)}]90:1.0) arc[radius=1.0, start angle=0, end angle= 90];

\draw[ultra thick] [arrows = {|[scale=2]-Stealth[scale=2]}] (14.4,0) -- (16.4,0);
	\end{tikzpicture}
\hspace{10mm}
	\begin{tikzpicture}

  \fill (8,-2) coordinate (1) node[right=3pt]  {};
  \fill (8,-6) coordinate (2) node[right=3pt]  {};
  \fill (10,-4) coordinate (3) node[right=3pt]  {};
  \fill (13,-4) coordinate (4) node[right=3pt]  {};
  \fill (8,-5) coordinate (5) node[right=3pt]  {};
  \fill (11,-6) coordinate (6) node[right=3pt]  {};
  \fill (8,-3) coordinate (7) node[right=3pt]  {};
   \fill (6,5) coordinate (8) node[right=3pt]  {};
  \fill (11,-5) coordinate (9) node[right=3pt]  {};
 \fill (6,-4) coordinate (10) node[right=3pt]  {};
 \fill (11,-3) coordinate (11) node[right=3pt]  {};
 \fill (11,4) coordinate (12) node[right=3pt]  {};
 \fill (12,-4) coordinate (13) node[right=3pt]  {};
   \fill (13,2) coordinate (18) node[right=3pt]  {};
   \fill (8,4) coordinate (19) node[right=3pt]  {};
   \fill (6,-1) coordinate (20) node[right=3pt]  {};
   \fill (8,7) coordinate (21) node[right=3pt]  {};
   \fill (8,6) coordinate (22) node[right=3pt]  {};
   \fill (12,5) coordinate (23) node[right=3pt]  {};
   \fill (13,5) coordinate (24) node[right=3pt]  {};
   \fill (9,5) coordinate (25) node[right=3pt]  {};
   \fill (10,5) coordinate (26) node[right=3pt]  {};
   \fill (11,7) coordinate (27) node[right=3pt]  {};
   \fill (11,6) coordinate (28) node[right=3pt]  {};
   \fill (7,5) coordinate (29) node[right=3pt]  {};
   \fill (6,2) coordinate (30) node[right=3pt]  {};
   \fill (13,-1) coordinate (31) node[right=3pt]  {};
  \fill (9,-4) coordinate (32) node[right=3pt]  {};
 \fill (7,-4) coordinate (33) node[right=3pt]  {};

	\draw[ultra thick] (21)--(22);
	\draw[ultra thick] (23)--(24);
	\draw[ultra thick] (26)--(25);
	\draw[ultra thick] (27)--(28);
	\draw[ultra thick] (29)--(8);

	\draw[ultra thick] (19)--(1);
	\draw[ultra thick] (18)--(30);
	\draw[ultra thick] (31)--(20);

	\draw[ultra thick] (10)--(33);
	\draw[ultra thick] (11)--(12);
	\draw[ultra thick] (1)--(7);
	\draw[ultra thick] (5)--(2);
	\draw[ultra thick] (4)--(13);
	\draw[ultra thick] (3)--(32);
	\draw[ultra thick] (9)--(6);

\draw[ultra thick] ([shift={(7,4)}]90:1.0) arc[radius=1.0, start angle=270, end angle= 360];
\draw[ultra thick] ([shift={(9,4)}]90:1.0) arc[radius=1.0, start angle=90, end angle= 180];
\draw[ultra thick] ([shift={(8,-4)}]90:1.0) arc[radius=1.0, start angle=180, end angle= 270];
\draw[ultra thick] ([shift={(8,-6)}]90:1.0) arc[radius=1.0, start angle=0, end angle= 90];
\draw[ultra thick] ([shift={(10,4)}]90:1.0) arc[radius=1.0, start angle=270, end angle= 360];
\draw[ultra thick] ([shift={(12,4)}]90:1.0) arc[radius=1.0, start angle=90, end angle= 180];
\draw[ultra thick] ([shift={(11,-4)}]90:1.0) arc[radius=1.0, start angle=180, end angle= 270];
\draw[ultra thick] ([shift={(11,-6)}]90:1.0) arc[radius=1.0, start angle=0, end angle= 90];

	\end{tikzpicture}
}
\end{center}
\subcaption{The case $q_{k^\prime} \geq n$.}
\end{minipage}
\caption{\label{fig:non_reduced_2} The change $\mathscr{G}(D) \mapsto \mathscr{G}(D^\prime)$ when $q_k < n$.}
\end{figure}
\end{proof}

\section{Mitosis recursion for reduced skew pipe dreams}

In this section, we prove that the set $\mathscr{M}(w)$ can be constructed by transposed skew mitosis operators. 
The following is the main result of this section. 

\begin{thm}\label{t:main_result_2}
For $w \in W$ and $1 \leq j \leq n$ such that $\ell(w) < \ell(w s_j)$, it holds that
\[\mathscr{L}(D(w s_j)) = M_j (\mathscr{L}(D(w))) = {\rm mitosis}_j^\top (\mathscr{L}(D(w))).\]
In particular, for $(j_1, \ldots, j_\ell) \in R(w)$, the following equalities hold:
\[\mathscr{L}(D(w)) = \mathscr{M}(w) = M_{j_\ell} \cdots M_{j_1} (SY_n) = {\rm mitosis}_{j_\ell}^\top \cdots {\rm mitosis}_{j_1}^\top (SY_n).\]
\end{thm}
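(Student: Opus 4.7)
The plan is to first establish the key inductive identity
\[\mathscr{L}(D(ws_j)) = M_j(\mathscr{L}(D(w))) = {\rm mitosis}_j^\top(\mathscr{L}(D(w)))\]
under the hypothesis $\ell(ws_j) = \ell(w)+1$, and then obtain the ``in particular'' statement by induction on $\ell(w)$: given $(j_1,\ldots,j_\ell) \in R(w)$, the identity is applied at each step starting from $\mathscr{L}(D(e)) = \{SY_n\}$ (see \cref{ex:length_one_type_C}). Since $\mathscr{M}(w) \subseteq \mathscr{L}(D(w))$ is already noted in Section 3, the equality $\mathscr{L}(D(w)) = \mathscr{M}(w)$ then emerges automatically from the iterative formula $\mathscr{L}(D(w)) = M_{j_\ell}\cdots M_{j_1}(SY_n)$ applied along any reduced word, which simultaneously proves reduced-word-independence of the composition of mitosis operators.

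To prove the inductive identity, I would invoke \cref{t:main_result_1} to identify $\mathscr{L}(D(u))$ with the set $RSP(u)$ of reduced skew pipe dreams having $w_D = u$. Since ${\rm mitosis}_j^\top(D) \subseteq M_j(D)$ holds by definition for every $D$, the identity reduces to the pair of inclusions
\[M_j(RSP(w)) \subseteq RSP(ws_j) \subseteq {\rm mitosis}_j^\top(RSP(w)).\]
For the first inclusion, I would take $D \in RSP(w)$, verify that condition \eqref{cond:mitosis} holds for $D$ at the index $j$ (the hypothesis $\ell(ws_j) > \ell(w)$ combined with \cref{p:length_formula} and the path-model description of $w_D$ should force the boxes at positions $q_r \in \{n-j+1,n+j-1\}$ near $(p_{r_0},q_{r_0})$ to be populated as required), and then perform a local analysis of $\mathscr{G}(D)$ at $(p_{r_0},q_{r_0})$. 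Replacing the crossing there by an elbow joint should yield $w_{D\setminus\{(p_{r_0},q_{r_0})\}} = w_D \cdot s_j = ws_j$; combined with the cardinality dropping by one and the invariance of $w_D$ under ladder moves (\cref{l:ladder_move_of_path_model}), this gives $M_j(D) \subseteq RSP(ws_j)$.

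For the reverse inclusion, given $D' \in RSP(ws_j)$, I would construct a preimage under ${\rm mitosis}_j^\top$. The strategy is to apply a sequence of inverse ladder moves $L_{p,q}^{-1}$ with $q \in \{n-j+1,n+j-1\}$ to bring $D'$ into a canonical form in which a single box can be reinserted at an outermost position $(p_{r_0},q_{r_0})$ on that boundary, producing a skew pipe dream $D \in RSP(w)$. The bound $p < {\rm start}_j^\top(D)$ in the definition of ${\rm mitosis}_j^\top$ should emerge automatically from the reducedness of $D'$: any inverse ladder move that would have to cross past ${\rm start}_j^\top(D)$ would either destroy reducedness or alter $w_{D'}$, both of which are excluded. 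Hence every element of $RSP(ws_j)$ lies in ${\rm mitosis}_j^\top(D)$ for some $D \in RSP(w)$.

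I expect the main obstacle to be the local path-model computation at $(p_{r_0},q_{r_0})$ showing that removing the corresponding crossing realizes right multiplication of $w_D$ by $s_j$, with correct control on the self-crossing parities of the pipes involved. The two boundary positions $q_{r_0} = n-j+1$ and $q_{r_0} = n+j-1$ must be treated separately, and the case $j = 1$ (where $s_1$ flips a sign and the self-crossings of the pipe $L_1$ are directly responsible for signs in $w_D$) is likely the subtlest. A secondary technical point is matching the ${\rm start}_j^\top$ bound to the reducedness constraint in the reverse inclusion; this should hinge on the compatibility of the path model with the explicit shape of $D(w)$ recorded in \cref{p:description_D(w)}.
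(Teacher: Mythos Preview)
Your treatment of the forward inclusion $M_j(\mathscr{L}(D(w))) \subseteq \mathscr{L}(D(ws_j))$ is essentially the paper's \cref{l:mitosis_inclusion}: one removes the box $(p_{r_0},q_{r_0})$, checks via the path model that this realizes right multiplication by $s_j$, and then invokes \cref{l:stable_ladder_RSP}. (A small overreach: you do not need to verify condition~\eqref{cond:mitosis} for every $D$; if it fails then $M_j(D)=\emptyset$ and there is nothing to show.)

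The reverse inclusion is where your proposal and the paper diverge. You outline a direct construction: given $D' \in RSP(ws_j)$, apply inverse ladder moves in columns $\{n-j+1,n+j-1\}$ to reach a canonical form, then reinsert a box to obtain $D \in RSP(w)$ with $D' \in {\rm mitosis}_j^\top(D)$. This is plausible, but the sketch hides real work. The constraint $p < {\rm start}_j^\top(D)$ in the definition of ${\rm mitosis}_j^\top$ refers to the \emph{source} $D$, not to $D'$, so you cannot read it off $D'$ directly; you would need to prove that the specific $D$ you build satisfies exactly the right value of ${\rm start}_j^\top(D)$ so that $D'$ lies in its mitosis offspring and not merely in $M_j(D)$. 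Your remark that this ``should emerge automatically from reducedness'' is not a proof; making it precise would likely require a careful analysis of the rectangle decomposition of the two relevant columns, comparable in effort to what the paper does anyway.

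The paper avoids this construction altogether by a counting argument. It introduces two-variable generating polynomials $F_w(x,y) = \sum_{D \in \mathscr{L}(D(w))} {\bm x}^D$ and an enlarged diagram $SY_n^{(j)}$ with one extra box. An involution $\tau$ on the column-$j$ rectangles shows that $F_w^{(j)}(x,y)$ is symmetric in $x,y$ (\cref{l:invariant_under_permutation}), and together with \cref{l:+_in_adding_box_case} this yields the divided-difference identity $(x-y)F_{ws_j}(x,y) = F_w(x,y) - F_w(y,x)$. A parallel computation (\cref{l:mitosis_polynomial}) shows that $(x-y)\sum_{D' \in {\rm mitosis}_j^\top(\mathscr{L}(D(w)))} {\bm x}^{D'}$ equals the same right-hand side, whence the two generating sums agree and in particular $|{\rm mitosis}_j^\top(\mathscr{L}(D(w)))| = |\mathscr{L}(D(ws_j))|$. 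Combined with the forward inclusion, this gives equality without ever exhibiting a preimage. Your bijective route, if carried through, would give finer information (an explicit inverse to mitosis), but as written the key step is a promissory note rather than an argument.
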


To prove \cref{t:main_result_2}, we prepare some lemmas. 

\begin{lem}\label{l:mitosis_inclusion}
For $w \in W$ and $1 \leq j \leq n$ such that $\ell(w) < \ell(w s_j)$, it holds that
\[{\rm mitosis}_j^\top (\mathscr{L}(D(w))) \subseteq M_j (\mathscr{L}(D(w))) \subseteq \mathscr{L}(D(w s_j)).\]
\end{lem}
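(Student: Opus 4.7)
\medskip

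The first inclusion is immediate: by definition ${\rm mitosis}_j^\top(D) \subseteq M_j(D)$ for every $D \in \mathcal{SPD}_n$, so taking the union over $D \in \mathscr{L}(D(w))$ preserves this containment. The content of the lemma is therefore the second inclusion, for which I would fix $D \in \mathscr{L}(D(w))$ and $D' \in M_j(D)$ (we may assume $M_j(D) \neq \emptyset$, so that condition $(\dagger)$ holds), and show $D' \in \mathscr{L}(D(ws_j))$. By \cref{t:main_result_1}, this is equivalent to proving that $D'$ is reduced with $w_{D'} = ws_j$.

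\medskip

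The cardinality count is direct. By the definition of $M_j$, we have $D' = D_0 \cup \{\text{some ladder moves applied}\}$ where $D_0 \coloneqq D \setminus \{(p_{r_0}, q_{r_0})\}$, and ladder moves preserve cardinality, so $|D'| = |D| - 1$. Since $D \in \mathscr{L}(D(w)) = RSP(w)$ by \cref{t:main_result_1}, we have $|D| = \ell(w_0) - \ell(w)$, and the hypothesis $\ell(w) < \ell(ws_j)$ forces $\ell(ws_j) = \ell(w) + 1$, giving $|D'| = \ell(w_0) - \ell(ws_j)$ as required. To compute $w_{D'}$, note that $D'$ is obtained from $D_0$ by ladder moves, so \cref{l:ladder_move_of_path_model} gives $w_{D'} = w_{D_0}$. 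The whole lemma thus reduces to showing $w_{D_0} = w_D \cdot s_j$.

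\medskip

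To establish $w_{D_0} = w_D s_j$, I would analyze the local change in the diagram $\mathscr{G}(D)$ when the crossing at $(p_{r_0}, q_{r_0})$ is replaced by the appropriate (reversed) elbow joint. The box removed lies in one of the two ``distinguished'' columns $n-j+1$ or $n+j-1$ of $\widetilde{SY}_n$. Condition $(\dagger)$ together with the maximality of $r_0$ forces a very rigid local picture above $(p_{r_0}, q_{r_0})$: in both distinguished columns, every box from position $r_0$ upwards (in the $(p_r, q_r)$-ordering) is a crossing, and the boxes immediately to the right of $(p_r, q_r)$ for $r > r_0$ are also all in $D$. Tracing pipes through this staircase of crossings shows that exactly two of the pipes $L_1, \ldots, L_n$ meet at the removed crossing, and that turning it into an elbow joint reroutes them in such a way that their endpoints are swapped and their self-crossing parities change in a controlled manner, precisely realizing the right action of $s_j$ on the signed permutation $w_D$. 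Because $\ell(ws_j) > \ell(w)$, this swap increases the length and yields $w_{D_0} = ws_j$; then $D'$ is reduced with $w_{D'} = ws_j$, proving $D' \in \mathscr{L}(D(ws_j))$ by \cref{t:main_result_1}.

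\medskip

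The main obstacle is the case analysis in the last step. One must separate the cases $j = 1$ and $j \geq 2$ (because the simple reflection $s_1$ acts only on the index $1$ while $s_j$ for $j \geq 2$ transposes $j-1$ and $j$ together with their negatives), and within each case distinguish $q_{r_0} = n-j+1$ from $q_{r_0} = n+j-1$. In each subcase, one must track which of the pipes $L_a, L_b$ passing through $(p_{r_0}, q_{r_0})$ acquires or loses a self-crossing and how the staircase of crossings above $r_0$ channels them to their new endpoints. A convenient simplification is that, because the ladder moves in the definition of $M_j$ only act in columns $\{n-j+1, n+j-1\}$, one may check the $w_D$-computation on $D_0$ alone and invoke \cref{l:ladder_move_of_path_model} for everything afterwards, so the only genuinely new diagrammatic verification is the single local move at $(p_{r_0}, q_{r_0})$ inside the staircase forced by $(\dagger)$.
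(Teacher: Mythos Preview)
Your proposal is correct and follows essentially the same approach as the paper's proof: both observe the first inclusion is trivial, then reduce the second inclusion via \cref{t:main_result_1} to showing $D_0 = D \setminus \{(p_{r_0}, q_{r_0})\} \in RSP(ws_j)$, use ladder-move invariance (\cref{l:ladder_move_of_path_model}/\cref{l:stable_ladder_RSP}) to handle the rest of $M_j(D)$, and finish with a diagrammatic case analysis on $j=1$ versus $j>1$ (and $q_{r_0}<n$ versus $q_{r_0}\geq n$) exactly as in the paper's Figures~\ref{fig:mitosis_removed_1} and~\ref{fig:mitosis_removed_2}. The only cosmetic difference is that you explicitly separate the cardinality count from the $w_{D_0}=ws_j$ computation, whereas the paper bundles both into the statement $D_0 \in RSP(ws_j)$ via \cref{l:stable_ladder_RSP}.
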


\begin{proof}
Since we have ${\rm mitosis}_j^\top (\mathscr{L}(D(w))) \subseteq M_j (\mathscr{L}(D(w)))$ by definition, it suffices to prove that $M_j (\mathscr{L}(D(w))) \subseteq \mathscr{L}(D(w s_j))$. 
Since $\mathscr{L}(D(w s_j)) = RSP(w s_j)$ by \cref{t:main_result_1}, let us show that $M_j (\mathscr{L}(D(w))) \subseteq RSP(w s_j)$. 
Take $D \in \mathscr{L}(D(w))$, and assume that $M_j (D) \neq \emptyset$. 
Define $r_0$ as in \eqref{eq:mitosis_removed_point} for $i = j$. 
The elements of $M_j (D)$ are obtained from $D \setminus \{(p_{r_0}, q_{r_0})\}$ by some ladder moves. 
Since $RSP(w s_j)$ is stable under ladder moves by \cref{l:stable_ladder_RSP}, it is sufficient to prove that $D \setminus \{(p_{r_0}, q_{r_0})\} \in RSP(w s_j)$. 
We divide the proof into two cases: $j = 1$ or $j > 1$. 

If $j = 1$, then by removing $\{(p_{r_0}, q_{r_0})\} \in D$, the diagram $\mathscr{G}(D)$ is changed to $\mathscr{G}(D \setminus \{(p_{r_0}, q_{r_0})\})$ as in Figure \ref{fig:mitosis_removed_1}. 
Since $D \in \mathscr{L}(D(w)) = RSP(w)$, this implies that $D \setminus \{(p_{r_0}, q_{r_0})\} \in RSP(w s_j)$.
\begin{figure}[h]
\begin{center}
\scalebox{0.25}{
	\begin{tikzpicture}

  \fill (8,-2) coordinate (1) node[right=3pt]  {};
  \fill (8,-6) coordinate (2) node[right=3pt]  {};
  \fill (10,-4) coordinate (3) node[right=3pt]  {};
  \fill (9,-7) coordinate (4) node[right=3pt]  {};
  \fill (10,-7) coordinate (5) node[right=3pt]  {};
  \fill (11,-6) coordinate (6) node[right=3pt]  {};
  \fill (11,-5) coordinate (7) node[right=3pt]  {};
 \fill (6,-4) coordinate (10) node[right=3pt]  {};
 \fill (6,-7) coordinate (11) node[right=3pt]  {};
 \fill (7,-7) coordinate (12) node[right=3pt]  {};
 \fill (8,-8) coordinate (13) node[right=3pt]  {};
 \fill (8,-9) coordinate (14) node[right=3pt]  {};
   \fill (10,2) coordinate (18) node[right=3pt]  {};
   \fill (8,4) coordinate (19) node[above=3pt]  {\fontsize{50pt}{15pt}\selectfont \bf $L_1$};
   \fill (6,-1) coordinate (20) node[right=3pt]  {};
   \fill (8,7) coordinate (21) node[right=3pt]  {};
   \fill (6,2) coordinate (30) node[right=3pt]  {};
   \fill (10,-1) coordinate (31) node[right=3pt]  {};
   \fill (11,0) coordinate (32) node[right=3pt]  {};
   \fill (11,1) coordinate (33) node[right=3pt]  {};

	\draw[ultra thick] (19)--(1);
	\draw[ultra thick] (18)--(30);
	\draw[ultra thick] (31)--(20);
	\draw[ultra thick] (32)--(33);

	\draw[ultra thick] (10)--(3);
	\draw[ultra thick] (11)--(12);
	\draw[ultra thick] (1)--(2);
	\draw[ultra thick] (4)--(5);
	\draw[ultra thick] (7)--(6);
	\draw[ultra thick] (13)--(14);

\draw[ultra thick] ([shift={(11,0)}]90:1.0) arc[radius=1.0, start angle=0, end angle= 90];
\draw[ultra thick] ([shift={(10,-2)}]90:1.0) arc[radius=1.0, start angle=270, end angle= 360];
\draw[ultra thick] ([shift={(8,-9)}]90:1.0) arc[radius=1.0, start angle=0, end angle= 90];
\draw[ultra thick] ([shift={(11,-6)}]90:1.0) arc[radius=1.0, start angle=0, end angle= 90];
\draw[ultra thick] ([shift={(8,-7)}]90:1.0) arc[radius=1.0, start angle=180, end angle= 270];
\draw[ultra thick] ([shift={(10,-8)}]90:1.0) arc[radius=1.0, start angle=270, end angle= 360];

\draw[ultra thick] [arrows = {|[scale=2]-Stealth[scale=2]}] (12.4,-2) -- (14.4,-2);
	\end{tikzpicture}
\hspace{10mm}
	\begin{tikzpicture}

  \fill (8,-2) coordinate (1) node[right=3pt]  {};
  \fill (8,-6) coordinate (2) node[right=3pt]  {};
  \fill (10,-4) coordinate (3) node[right=3pt]  {};
  \fill (9,-7) coordinate (4) node[right=3pt]  {};
  \fill (10,-7) coordinate (5) node[right=3pt]  {};
  \fill (11,-6) coordinate (6) node[right=3pt]  {};
  \fill (11,-5) coordinate (7) node[right=3pt]  {};
 \fill (6,-4) coordinate (10) node[right=3pt]  {};
 \fill (6,-7) coordinate (11) node[right=3pt]  {};
 \fill (7,-7) coordinate (12) node[right=3pt]  {};
 \fill (8,-8) coordinate (13) node[right=3pt]  {};
 \fill (8,-9) coordinate (14) node[right=3pt]  {};
   \fill (10,2) coordinate (18) node[right=3pt]  {};
   \fill (8,4) coordinate (19) node[above=3pt]  {\fontsize{50pt}{15pt}\selectfont \bf $L_1$};
   \fill (6,-1) coordinate (20) node[right=3pt]  {};
   \fill (6,2) coordinate (30) node[right=3pt]  {};
   \fill (10,-1) coordinate (31) node[right=3pt]  {};
   \fill (11,0) coordinate (32) node[right=3pt]  {};
   \fill (11,1) coordinate (33) node[right=3pt]  {};
  \fill (9,-4) coordinate (34) node[right=3pt]  {};
  \fill (7,-4) coordinate (35) node[right=3pt]  {};
  \fill (8,-3) coordinate (36) node[right=3pt]  {};
  \fill (8,-5) coordinate (37) node[right=3pt]  {};

	\draw[ultra thick] (19)--(1);
	\draw[ultra thick] (18)--(30);
	\draw[ultra thick] (31)--(20);
	\draw[ultra thick] (32)--(33);

	\draw[ultra thick] (34)--(3);
	\draw[ultra thick] (10)--(35);
	\draw[ultra thick] (11)--(12);
	\draw[ultra thick] (1)--(36);
	\draw[ultra thick] (37)--(2);
	\draw[ultra thick] (4)--(5);
	\draw[ultra thick] (7)--(6);
	\draw[ultra thick] (13)--(14);

\draw[ultra thick] ([shift={(11,0)}]90:1.0) arc[radius=1.0, start angle=0, end angle= 90];
\draw[ultra thick] ([shift={(10,-2)}]90:1.0) arc[radius=1.0, start angle=270, end angle= 360];
\draw[ultra thick] ([shift={(8,-9)}]90:1.0) arc[radius=1.0, start angle=0, end angle= 90];
\draw[ultra thick] ([shift={(11,-6)}]90:1.0) arc[radius=1.0, start angle=0, end angle= 90];
\draw[ultra thick] ([shift={(8,-7)}]90:1.0) arc[radius=1.0, start angle=180, end angle= 270];
\draw[ultra thick] ([shift={(10,-8)}]90:1.0) arc[radius=1.0, start angle=270, end angle= 360];
\draw[ultra thick] ([shift={(9,-5)}]90:1.0) arc[radius=1.0, start angle=90, end angle= 180];
\draw[ultra thick] ([shift={(7,-5)}]90:1.0) arc[radius=1.0, start angle=270, end angle= 360];

	\end{tikzpicture}
}
\end{center}
\caption{\label{fig:mitosis_removed_1} The change $\mathscr{G}(D) \mapsto \mathscr{G}(D \setminus \{(p_{r_0}, q_{r_0})\})$ when $j = 1$.}
\end{figure}

If $j > 1$, then by removing $\{(p_{r_0}, q_{r_0})\} \in D$, the diagram $\mathscr{G}(D)$ is changed to $\mathscr{G}(D \setminus \{(p_{r_0}, q_{r_0})\})$ as in Figure \ref{fig:mitosis_removed_2}. 
Hence we have $D \setminus \{(p_{r_0}, q_{r_0})\} \in RSP(w s_j)$ since $D \in \mathscr{L}(D(w)) = RSP(w)$.
This completes the proof of the lemma. 
\begin{figure}[h]
\centering
\begin{minipage}[b]{0.4\linewidth}
\begin{center}
\scalebox{0.25}{
	\begin{tikzpicture}

   \fill (8,-2) coordinate (1) node[right=3pt]  {};
  \fill (8,-6) coordinate (2) node[right=3pt]  {};
  \fill (10,-4) coordinate (3) node[right=3pt]  {};
  \fill (13,-4) coordinate (4) node[right=3pt]  {};
  \fill (12,-4) coordinate (5) node[right=3pt]  {};
  \fill (11,-6) coordinate (6) node[right=3pt]  {};
  \fill (11,-5) coordinate (7) node[right=3pt]  {};
   \fill (11,3) coordinate (8) node[right=3pt]  {};
  \fill (11,1) coordinate (9) node[right=3pt]  {};
 \fill (6,-4) coordinate (10) node[right=3pt]  {};
 \fill (11,-3) coordinate (11) node[right=3pt]  {};
 \fill (11,4) coordinate (12) node[above=3pt]  {\fontsize{50pt}{15pt}\selectfont \bf $L_{j-1}$};
 \fill (9,-4) coordinate (13) node[right=3pt]  {};
 \fill (7,-4) coordinate (14) node[right=3pt]  {};
   \fill (8,-3) coordinate (15) node[above=3pt]  {};
  \fill (8,-5) coordinate (16) node[right=3pt]  {};
   \fill (13,2) coordinate (18) node[right=3pt]  {};
   \fill (8,4) coordinate (19) node[above=3pt]  {\fontsize{50pt}{15pt}\selectfont \bf $L_j$};
   \fill (6,-1) coordinate (20) node[right=3pt]  {};
   \fill (8,3) coordinate (21) node[right=3pt]  {};
   \fill (8,1) coordinate (22) node[right=3pt]  {};
   \fill (7,2) coordinate (23) node[right=3pt]  {};
   \fill (9,2) coordinate (24) node[right=3pt]  {};
   \fill (12,2) coordinate (25) node[right=3pt]  {};
   \fill (10,2) coordinate (26) node[right=3pt]  {};
   \fill (6,2) coordinate (30) node[right=3pt]  {};
   \fill (13,-1) coordinate (31) node[right=3pt]  {};

	\draw[ultra thick] (19)--(22);
	\draw[ultra thick] (22)--(1);
	\draw[ultra thick] (24)--(30);
	\draw[ultra thick] (18)--(26);
	\draw[ultra thick] (26)--(24);
	\draw[ultra thick] (31)--(20);

	\draw[ultra thick] (13)--(3);
	\draw[ultra thick] (10)--(13);
	\draw[ultra thick] (9)--(12);
	\draw[ultra thick] (11)--(9);
	\draw[ultra thick] (1)--(15);
	\draw[ultra thick] (15)--(2);
	\draw[ultra thick] (4)--(5);
	\draw[ultra thick] (7)--(6);

\draw[ultra thick] ([shift={(10,-5)}]90:1.0) arc[radius=1.0, start angle=270, end angle= 360];
\draw[ultra thick] ([shift={(12,-5)}]90:1.0) arc[radius=1.0, start angle=90, end angle= 180];

\draw[ultra thick] [arrows = {|[scale=2]-Stealth[scale=2]}] (14.4,-1) -- (16.4,-1);
	\end{tikzpicture}
\hspace{10mm}
	\begin{tikzpicture}

  \fill (8,-2) coordinate (1) node[right=3pt]  {};
  \fill (8,-6) coordinate (2) node[right=3pt]  {};
  \fill (10,-4) coordinate (3) node[right=3pt]  {};
  \fill (13,-4) coordinate (4) node[right=3pt]  {};
  \fill (12,-4) coordinate (5) node[right=3pt]  {};
  \fill (11,-6) coordinate (6) node[right=3pt]  {};
  \fill (11,-5) coordinate (7) node[right=3pt]  {};
 \fill (6,-4) coordinate (10) node[right=3pt]  {};
 \fill (11,-3) coordinate (11) node[right=3pt]  {};
 \fill (11,4) coordinate (12) node[above=3pt]  {\fontsize{50pt}{15pt}\selectfont \bf $L_{j-1}$};
 \fill (9,-4) coordinate (13) node[right=3pt]  {};
 \fill (7,-4) coordinate (14) node[right=3pt]  {};
   \fill (8,-3) coordinate (15) node[above=3pt]  {};
  \fill (8,-5) coordinate (16) node[right=3pt]  {};
   \fill (13,2) coordinate (18) node[right=3pt]  {};
   \fill (8,4) coordinate (19) node[above=3pt]  {\fontsize{50pt}{15pt}\selectfont \bf $L_j$};
   \fill (6,-1) coordinate (20) node[right=3pt]  {};
   \fill (8,3) coordinate (21) node[right=3pt]  {};
   \fill (8,1) coordinate (22) node[right=3pt]  {};
   \fill (7,2) coordinate (23) node[right=3pt]  {};
   \fill (9,2) coordinate (24) node[right=3pt]  {};
   \fill (12,2) coordinate (25) node[right=3pt]  {};
   \fill (10,2) coordinate (26) node[right=3pt]  {};
   \fill (11,3) coordinate (27) node[right=3pt]  {};
  \fill (11,1) coordinate (28) node[right=3pt]  {};
   \fill (6,2) coordinate (30) node[right=3pt]  {};
   \fill (13,-1) coordinate (31) node[right=3pt]  {};

	\draw[ultra thick] (19)--(22);
	\draw[ultra thick] (22)--(1);
	\draw[ultra thick] (24)--(30);
	\draw[ultra thick] (18)--(26);
	\draw[ultra thick] (26)--(24);
	\draw[ultra thick] (31)--(20);

	\draw[ultra thick] (13)--(3);
	\draw[ultra thick] (10)--(14);
	\draw[ultra thick] (28)--(12);
	\draw[ultra thick] (11)--(28);
	\draw[ultra thick] (1)--(15);
	\draw[ultra thick] (16)--(2);
	\draw[ultra thick] (4)--(5);
	\draw[ultra thick] (7)--(6);

\draw[ultra thick] ([shift={(10,-5)}]90:1.0) arc[radius=1.0, start angle=270, end angle= 360];
\draw[ultra thick] ([shift={(12,-5)}]90:1.0) arc[radius=1.0, start angle=90, end angle= 180];
\draw[ultra thick] ([shift={(7,-5)}]90:1.0) arc[radius=1.0, start angle=270, end angle= 360];
\draw[ultra thick] ([shift={(9,-5)}]90:1.0) arc[radius=1.0, start angle=90, end angle= 180];

	\end{tikzpicture}
}
\end{center}
\subcaption{The case $q_{r_0} < n$.}
\end{minipage}
\begin{minipage}[b]{0.4\linewidth}
\begin{center}
\scalebox{0.25}{
	\begin{tikzpicture}

  \fill (8,-2) coordinate (1) node[right=3pt]  {};
  \fill (8,-6) coordinate (2) node[right=3pt]  {};
  \fill (10,-4) coordinate (3) node[right=3pt]  {};
  \fill (13,-4) coordinate (4) node[right=3pt]  {};
  \fill (8,-5) coordinate (5) node[right=3pt]  {};
  \fill (11,-6) coordinate (6) node[right=3pt]  {};
  \fill (8,-3) coordinate (7) node[right=3pt]  {};
   \fill (6,5) coordinate (8) node[right=3pt]  {};
 \fill (6,-4) coordinate (10) node[right=3pt]  {};
 \fill (11,-3) coordinate (11) node[right=3pt]  {};
 \fill (11,4) coordinate (12) node[right=3pt]  {};
   \fill (13,2) coordinate (18) node[right=3pt]  {};
   \fill (8,4) coordinate (19) node[right=3pt]  {};
   \fill (6,-1) coordinate (20) node[right=3pt]  {};
   \fill (8,7) coordinate (21) node[above=3pt]  {\fontsize{50pt}{15pt}\selectfont \bf $L_j$};
   \fill (8,6) coordinate (22) node[right=3pt]  {};
   \fill (13,5) coordinate (24) node[right=3pt]  {};
   \fill (9,5) coordinate (25) node[right=3pt]  {};
   \fill (11,7) coordinate (27) node[above=3pt]  {\fontsize{50pt}{15pt}\selectfont \bf $L_{j-1}$};
   \fill (7,5) coordinate (29) node[right=3pt]  {};
   \fill (6,2) coordinate (30) node[right=3pt]  {};
   \fill (13,-1) coordinate (31) node[right=3pt]  {};
  \fill (9,-4) coordinate (32) node[right=3pt]  {};
 \fill (7,-4) coordinate (33) node[right=3pt]  {};

	\draw[ultra thick] (21)--(19);
	\draw[ultra thick] (25)--(24);
	\draw[ultra thick] (27)--(12);
	\draw[ultra thick] (25)--(8);

	\draw[ultra thick] (19)--(1);
	\draw[ultra thick] (18)--(30);
	\draw[ultra thick] (31)--(20);

	\draw[ultra thick] (10)--(33);
	\draw[ultra thick] (11)--(12);
	\draw[ultra thick] (1)--(7);
	\draw[ultra thick] (5)--(2);
	\draw[ultra thick] (4)--(32);
	\draw[ultra thick] (11)--(6);

\draw[ultra thick] ([shift={(8,-4)}]90:1.0) arc[radius=1.0, start angle=180, end angle= 270];
\draw[ultra thick] ([shift={(8,-6)}]90:1.0) arc[radius=1.0, start angle=0, end angle= 90];

\draw[ultra thick] [arrows = {|[scale=2]-Stealth[scale=2]}] (14.4,0) -- (16.4,0);
	\end{tikzpicture}
\hspace{10mm}
	\begin{tikzpicture}

  \fill (8,-2) coordinate (1) node[right=3pt]  {};
  \fill (8,-6) coordinate (2) node[right=3pt]  {};
  \fill (10,-4) coordinate (3) node[right=3pt]  {};
  \fill (13,-4) coordinate (4) node[right=3pt]  {};
  \fill (8,-5) coordinate (5) node[right=3pt]  {};
  \fill (11,-6) coordinate (6) node[right=3pt]  {};
  \fill (8,-3) coordinate (7) node[right=3pt]  {};
   \fill (6,5) coordinate (8) node[right=3pt]  {};
  \fill (11,-5) coordinate (9) node[right=3pt]  {};
 \fill (6,-4) coordinate (10) node[right=3pt]  {};
 \fill (11,-3) coordinate (11) node[right=3pt]  {};
 \fill (11,4) coordinate (12) node[right=3pt]  {};
 \fill (12,-4) coordinate (13) node[right=3pt]  {};
   \fill (13,2) coordinate (18) node[right=3pt]  {};
   \fill (8,4) coordinate (19) node[right=3pt]  {};
   \fill (6,-1) coordinate (20) node[right=3pt]  {};
   \fill (8,7) coordinate (21) node[above=3pt]  {\fontsize{50pt}{15pt}\selectfont \bf $L_j$};
   \fill (8,6) coordinate (22) node[right=3pt]  {};
   \fill (12,5) coordinate (23) node[right=3pt]  {};
   \fill (13,5) coordinate (24) node[right=3pt]  {};
   \fill (9,5) coordinate (25) node[right=3pt]  {};
   \fill (10,5) coordinate (26) node[right=3pt]  {};
   \fill (11,7) coordinate (27) node[above=3pt]  {\fontsize{50pt}{15pt}\selectfont \bf $L_{j-1}$};
   \fill (11,6) coordinate (28) node[right=3pt]  {};
   \fill (7,5) coordinate (29) node[right=3pt]  {};
   \fill (6,2) coordinate (30) node[right=3pt]  {};
   \fill (13,-1) coordinate (31) node[right=3pt]  {};
  \fill (9,-4) coordinate (32) node[right=3pt]  {};
 \fill (7,-4) coordinate (33) node[right=3pt]  {};

	\draw[ultra thick] (21)--(19);
	\draw[ultra thick] (8)--(24);
	\draw[ultra thick] (27)--(12);

	\draw[ultra thick] (19)--(1);
	\draw[ultra thick] (18)--(30);
	\draw[ultra thick] (31)--(20);

	\draw[ultra thick] (10)--(33);
	\draw[ultra thick] (11)--(12);
	\draw[ultra thick] (1)--(7);
	\draw[ultra thick] (5)--(2);
	\draw[ultra thick] (4)--(13);
	\draw[ultra thick] (3)--(32);
	\draw[ultra thick] (9)--(6);

\draw[ultra thick] ([shift={(8,-4)}]90:1.0) arc[radius=1.0, start angle=180, end angle= 270];
\draw[ultra thick] ([shift={(8,-6)}]90:1.0) arc[radius=1.0, start angle=0, end angle= 90];
\draw[ultra thick] ([shift={(11,-4)}]90:1.0) arc[radius=1.0, start angle=180, end angle= 270];
\draw[ultra thick] ([shift={(11,-6)}]90:1.0) arc[radius=1.0, start angle=0, end angle= 90];

	\end{tikzpicture}
}
\end{center}
\subcaption{The case $q_{r_0} \geq n$.}
\end{minipage}
\caption{\label{fig:mitosis_removed_2} The change $\mathscr{G}(D) \mapsto \mathscr{G}(D \setminus \{(p_{r_0}, q_{r_0})\})$ when $j > 1$.}
\end{figure}
\end{proof}

For $1 \leq j \leq n$, set $SY_n^{(j)} \coloneqq SY_n \sqcup \{(0, n+j)\}$, and denote by $\mathcal{SPD}_n^{(j)}$ the power set of $SY_n^{(j)}$. 
Since $SY_n \subseteq SY_n^{(j)}$, the set $\mathcal{SPD}_n$ of skew pipe dreams can be regarded as a subset of $\mathcal{SPD}_n^{(j)}$. 
For $(p, q) \in SY_n$, define the ladder move $L_{p,q}^{(j)}$ on $\mathcal{SPD}_n^{(j)}$ in a way similar to the ladder move $L_{p,q}$ on $\mathcal{SPD}_n$.
When we consider $\mathcal{SPD}_n^{(j)}$, we allow ladder moves that move $+$ to the additional box $(0, n+j) \in SY_n^{(j)}$.
For $D \in \mathcal{SPD}_n \subseteq \mathcal{SPD}_n^{(j)}$, denote by $\mathscr{L}^{(j)}(D)$ the set of elements of $\mathcal{SPD}_n^{(j)}$ obtained from $D$ by applying sequences of ladder moves $L_{p,q}^{(j)}$ for $(p, q) \in SY_n$. 
The inverse ladder move $(L_{p,q}^{(j)})^{-1}$ on $\mathcal{SPD}_n^{(j)}$ for $(p, q) \in SY_n^{(j)}$ is similarly defined. 

\begin{defi}
Let $x, y$ be indeterminates, and $\z[x, y]$ the polynomial ring with $\mathbb{Z}$-coefficients. 
For $D \in \mathcal{SPD}_n^{(j)}$, define a monomial ${\bm x}^D = x^{d(x, D)} y^{d(y, D)} \in \z[x, y]$ by 
\begin{align*}
&d(x, D) \coloneqq |\{(p, q) \in D \mid q \in \{n-j+1, n+j-1\}\}|,\\
&d(y, D) \coloneqq |\{(p, q) \in D \mid p \leq n-j+1,\ q \in \{n-j+2, n+j\}\}|. 
\end{align*}
\end{defi}

\begin{ex}\label{ex:extended_skew_pipe}
Let $n = 3$, and $j = 2$.
Then we have 
\[SY_3^{(2)} = \begin{ytableau}
\none & \none & \none & \none & \mbox{} \\
\mbox{} & \mbox{} & \mbox{} & \mbox{} & \mbox{} \\
\none & \mbox{} & \mbox{} & \mbox{} & \none \\
\none & \none & \mbox{} & \none & \none 
\end{ytableau}.\]
In addition, for $D = \{(1, 2), (1, 5), (2, 2), (2, 4), (3, 3)\} \in \mathcal{SPD}_3 \subseteq \mathcal{SPD}_3^{(2)}$, it follows that 
\[L_{1, 2}^{(2)} (D) = \begin{ytableau}
\none & \none & \none & \none & +\\
\mbox{} & \mbox{} & \mbox{} & \mbox{} & + \\
\none & + & \mbox{} & + & \none \\
\none & \none & + & \none & \none 
\end{ytableau}.\]
The variables $x$ and $y$ are assigned to boxes in $SY_3^{(2)}$ as follows: 
\[\begin{ytableau}
\none & \none & \none & \none & y \\
\mbox{} & x & y & x & y \\
\none & x & y & x & \none \\
\none & \none & \mbox{} & \none & \none 
\end{ytableau}.\]
Hence we have ${\bm x}^D = x^3 y$ and ${\bm x}^{L_{1, 2}^{(2)} (D)} = x^2 y^2$. 
\end{ex}

The set $SY_n^{(j)}$ is naturally embedded in the diagram $SY_{n+1}$ by regarding the $0$-th row of $SY_n^{(j)}$ as a part of the $1$-st row of $SY_{n+1}$ and by identifying the $k$-th row of $SY_n^{(j)}$ with the $(k+1)$-st row of $SY_{n+1}$. 
Then, for an element $w$ of the Weyl group $W$ of type $C_n$, the set $\mathscr{L}^{(j)}(D(w)) \subseteq \mathcal{SPD}_n^{(j)}$ is naturally thought of as a subset of $\mathscr{L}(D(w s_{n+1} \cdots s_2 s_1 s_2 \cdots s_{n+1})) \subseteq \mathcal{SPD}_{n+1}$. 
In particular, elements of $\mathscr{L}^{(j)}(D(w))$ have the same properties as reduced skew pipe dreams. 

\begin{lem}\label{l:+_in_adding_box_case}
For $w \in W$ and $1 \leq j \leq n$ such that $\ell(w) < \ell(w s_j)$, it holds that
\[\{D \in \mathscr{L}^{(j)}(D(w)) \mid (0, n+j) \in D\} = \{D \cup \{(0, n+j)\} \mid D \in \mathscr{L}(D(w s_j))\}.\]
\end{lem}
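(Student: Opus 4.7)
The plan is to establish the two inclusions separately, using \cref{t:main_result_1}'s identification $\mathscr{L}(D(w'))=RSP(w')$ together with the description of $D(w)$ from \cref{p:description_D(w)}.

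For the inclusion $\supseteq$, it suffices to verify the base case $D(ws_j)\cup\{(0,n+j)\}\in \mathscr{L}^{(j)}(D(w))$: once this is secured, any $D\in\mathscr{L}(D(ws_j))$ is reached from $D(ws_j)$ by a sequence of ladder moves in $SY_n$ that do not touch $(0,n+j)$, and those same moves apply verbatim in $\mathcal{SPD}_n^{(j)}$ to yield $D\cup\{(0,n+j)\}\in\mathscr{L}^{(j)}(D(w))$. Comparing the $m$-values via \cref{p:description_D(w)}, one checks that $D(w)\setminus D(ws_j)$ consists of a single box $(p_0,q_0)$ with $q_0\in\{n-j+1,n+j-1\}$, and that the extended ladder move $L^{(j)}_{p_0,q_0}$ is defined on $D(w)$ and sends it directly to $D(ws_j)\cup\{(0,n+j)\}$.

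For the inclusion $\subseteq$, take $D'\in\mathscr{L}^{(j)}(D(w))$ with $(0,n+j)\in D'$, and fix a producing sequence $D(w)=D_0\to D_1\to\cdots\to D_m=D'$ of ladder moves. Since no ladder move $L^{(j)}_{p,q}$ is defined for $(p,q)=(0,n+j)$, the $+$ at $(0,n+j)$ is placed at a uniquely determined step $r$ and cannot be removed thereafter. Thus $D_{r-1}$ does not contain $(0,n+j)$, so $D_{r-1}\in\mathscr{L}(D(w))=RSP(w)$ by \cref{t:main_result_1}. Writing the move $D_{r-1}\to D_r$ as $L^{(j)}_{p_0,q_0}$, the condition that the move lands at $(0,n+j)$ forces $q_0\in\{n-j+1,n+j-1\}$, and all subsequent moves $D_r\to\cdots\to D_m$ lie in $SY_n$ and preserve $(0,n+j)$, so $D'\setminus\{(0,n+j)\}$ is obtained from $D_{r-1}\setminus\{(p_0,q_0)\}$ by ladder moves in $\mathcal{SPD}_n$.

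The crux is then to establish $D_{r-1}\setminus\{(p_0,q_0)\}\in RSP(ws_j)=\mathscr{L}(D(ws_j))$. Cardinality is automatic, since $|D_{r-1}\setminus\{(p_0,q_0)\}|=n^2-\ell(w)-1=n^2-\ell(ws_j)$. To identify the associated signed permutation, I would analyze the path model $\mathscr{G}(D_{r-1})$ at the crossing corresponding to $(p_0,q_0)$: because $q_0$ is one of the two columns $\{n-j+1,n+j-1\}$ tied to the $j$-th simple reflection, this crossing lies in a row and column of $\widetilde{SY}_n$ attached to $s_j$, and replacing it by the appropriate (reversed) elbow joint reroutes exactly two adjacent pipes so that $w_{D_{r-1}}=w$ is right-multiplied by $s_j$. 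Combined with \cref{l:stable_ladder_RSP} and \cref{l:ladder_move_of_path_model}, this delivers $D'\setminus\{(0,n+j)\}\in\mathscr{L}(D(ws_j))$. The main obstacle will be the pipe-tracing computation in this permutation step; several subcases arise depending on whether $q_0=n$, $q_0<n$, or $q_0>n$ and on whether $j=1$ or $j\ge 2$, but each is handled by a local diagrammatic surgery on $\mathscr{G}(D_{r-1})$ in direct analogy with the pipe diagrams of \cref{l:ladder_move_of_path_model} (Figures~\ref{fig:ladder_move_path_1}, \ref{fig:ladder_move_path_2}) and those accompanying the proof of \cref{t:main_result_1} (Figures~\ref{fig:mitosis_removed_1}, \ref{fig:mitosis_removed_2}).
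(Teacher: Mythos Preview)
Your $\subseteq$ argument is essentially the paper's: reduce to the step where the $+$ lands at $(0,n+j)$, identify the source box $(p_0,q_0)$, and use the pipe-diagram surgery (Figures~\ref{fig:mitosis_removed_1}, \ref{fig:mitosis_removed_2}) to see that deleting it right-multiplies $w_D$ by $s_j$. Note that the landing condition forces $(p_0,q_0)$ to coincide with $(p_{r_0},q_{r_0})$ of \eqref{eq:mitosis_removed_point}, so \cref{l:mitosis_inclusion} already gives exactly what you need.

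Your $\supseteq$ argument, however, has a genuine gap. The claim that $D(w)\setminus D(ws_j)$ is a single box $(p_0,q_0)$ with $q_0\in\{n-j+1,n+j-1\}$, and that one extended ladder move carries $D(w)$ to $D(ws_j)\cup\{(0,n+j)\}$, is false. Take $n=3$, $w=s_3s_2$, $j=3$. Using \cref{p:description_D(w)} one finds $m(s_3s_2,\cdot)=(3,3,1)$ and $m(s_3s_2s_3,\cdot)=(3,2,1)$, so $D(w)\setminus D(ws_j)=\{(2,4)\}$; but $\{n-j+1,n+j-1\}=\{1,5\}$, and no ladder move from column $4$ can reach $(0,6)$. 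One actually needs \emph{two} moves: $L_{2,4}$ followed by $L^{(3)}_{1,5}$. So while the base case $D(ws_j)\cup\{(0,n+j)\}\in\mathscr{L}^{(j)}(D(w))$ is true, your proposed proof of it does not work, and establishing it in general requires an iterative argument.

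The paper avoids this by not reducing to a base case at all: for each $D\in\mathscr{L}(D(ws_j))$ it attempts the inverse ladder move $(L^{(j)}_{0,n+j})^{-1}$ on $D\cup\{(0,n+j)\}$ directly; when this move is undefined, reducedness forces an empty box in columns $\{n-j+1,n+j-1\}$, and one iterates inverse ladder moves downward until an element of $\mathscr{L}(D(w))$ is reached (Figure~\ref{fig:extended_inverse_ladder}). That iteration is the missing ingredient in your approach.
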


\begin{proof}
We prove the assertion only when $j > 1$; 
a proof of the assertion when $j = 1$ is similar. 
Take $D \in \mathscr{L}^{(j)}(D(w))$ such that $(0, n+j) \in D$. 
Then we can change $D$ to an element $D^\prime$ of $\mathscr{L}(D(w))$ by a sequence of inverse ladder moves. 
We may assume that $D^\prime = (L_{0, n+j}^{(j)})^{-1} (D'')$ for some $D'' \in \mathscr{L}^{(j)}(D(w))$ which is obtained from $D$ by a sequence of inverse ladder moves. 
Note that $D''$ is obtained from $D^\prime$ by a ladder move and that the change $D^\prime \mapsto D'' \setminus \{(0, n+j)\}$ is given in a form similar to Figure \ref{fig:mitosis_removed_2}. 
Since $D^\prime \in \mathscr{L}(D(w)) = RSP(w)$, this implies that $D'' \setminus \{(0, n+j)\} \in RSP(w s_j) = \mathscr{L}(D(w s_j))$. 
Hence we have $D \setminus \{(0, n+j)\} \in \mathscr{L}(D(w s_j))$.

Conversely, take $D \in \mathscr{L}(D(w s_j))$. 
If the inverse ladder move $D^\prime \coloneqq (L_{0, n+j}^{(j)})^{-1} (D \cup \{(0, n+j)\})$ is defined, then the change $D^\prime \mapsto D$ is given in a form similar to Figure \ref{fig:mitosis_removed_2}. 
Since $D \in \mathscr{L}(D(w s_j)) = RSP(w s_j)$, this implies that $D^\prime \in RSP(w) = \mathscr{L}(D(w))$. 
Hence we see that $D \cup \{(0, n+j)\} \in \mathscr{L}^{(j)}(D(w))$. 
Let us consider the case that the inverse ladder move $(L_{0, n+j}^{(j)})^{-1} (D \cup \{(0, n+j)\})$ is not defined. 
Since $D \in \mathscr{L}(D(w s_j)) = RSP(w s_j)$ and $\ell(w) < \ell(w s_j)$, it follows that $L_{j-1}$ and $L_j$ do not cross in the diagram $\mathscr{G}(D)$. 
Hence there exists $(p, q) \in SY_n \setminus D$ such that $q \in \{n-j+1, n+j-1\}$. 
Take $1 \leq k \leq N$ to be the maximum positive number such that $q_k \in \{n-j+1, n+j-1\}$ and such that $(p_k, q_k) \notin D$. 
Then the $(n-j+1)$-st and $(n-j+2)$-nd columns of $\mathscr{G}(D)$ are given as in Figure \ref{fig:extended_inverse_ladder}. 
If the inverse ladder move $D'' \coloneqq (L_{p_k, q_k+1}^{(j)})^{-1} (D \cup \{(0, n+j)\})$ is defined, then the inverse ladder move $(L_{0, n+j}^{(j)})^{-1} (D'')$ is also defined, and we have $(L_{0, n+j}^{(j)})^{-1} (D'') \in RSP(w) = \mathscr{L}(D(w))$. 
Hence it follows that $D \cup \{(0, n+j)\} \in \mathscr{L}^{(j)}(D(w))$. 
If the inverse ladder move $(L_{p_k, q_k+1}^{(j)})^{-1} (D \cup \{(0, n+j)\})$ is not defined, then repeat this argument using the fact that two pipes in $\mathscr{G}(D)$ do not cross more than once since $D$ is reduced.
Then we conclude that $D \cup \{(0, n+j)\}$ can be changed to an element of $\mathscr{L}(D(w))$ by a sequence of inverse ladder moves, which implies that $D \cup \{(0, n+j)\} \in \mathscr{L}^{(j)}(D(w))$. 
This completes the proof of the lemma.
\begin{figure}[h]
\centering
\begin{minipage}[b]{0.4\linewidth}
\begin{center}
\scalebox{0.25}{
	\begin{tikzpicture}

  \fill (8,-2) coordinate (1) node[right=3pt]  {};
  \fill (8,-6) coordinate (2) node[right=3pt]  {};
  \fill (10,-4) coordinate (3) node[right=3pt]  {};
  \fill (13,-4) coordinate (4) node[right=3pt]  {};
  \fill (12,-4) coordinate (5) node[right=3pt]  {};
  \fill (11,-6) coordinate (6) node[right=3pt]  {};
  \fill (11,-5) coordinate (7) node[right=3pt]  {};
 \fill (6,-4) coordinate (10) node[right=3pt]  {};
 \fill (11,-3) coordinate (11) node[right=3pt]  {};
 \fill (11,4) coordinate (12) node[above=3pt]  {\fontsize{50pt}{15pt}\selectfont \bf $L_{j-1}$};
 \fill (9,-4) coordinate (13) node[right=3pt]  {};
 \fill (7,-4) coordinate (14) node[right=3pt]  {};
   \fill (8,-3) coordinate (15) node[above=3pt]  {};
  \fill (8,-5) coordinate (16) node[right=3pt]  {};
   \fill (13,2) coordinate (18) node[right=3pt]  {};
   \fill (8,4) coordinate (19) node[above=3pt]  {\fontsize{50pt}{15pt}\selectfont \bf $L_j$};
   \fill (6,-1) coordinate (20) node[right=3pt]  {};
   \fill (8,3) coordinate (21) node[right=3pt]  {};
   \fill (8,1) coordinate (22) node[right=3pt]  {};
   \fill (7,2) coordinate (23) node[right=3pt]  {};
   \fill (9,2) coordinate (24) node[right=3pt]  {};
   \fill (12,2) coordinate (25) node[right=3pt]  {};
   \fill (10,2) coordinate (26) node[right=3pt]  {};
   \fill (11,3) coordinate (27) node[right=3pt]  {};
  \fill (11,1) coordinate (28) node[right=3pt]  {};
   \fill (6,2) coordinate (30) node[right=3pt]  {};
   \fill (13,-1) coordinate (31) node[right=3pt]  {};

	\draw[ultra thick] (19)--(22);
	\draw[ultra thick] (22)--(1);
	\draw[ultra thick] (24)--(30);
	\draw[ultra thick] (18)--(26);
	\draw[ultra thick] (26)--(24);
	\draw[ultra thick] (31)--(20);

	\draw[ultra thick] (11)--(7);
	\draw[ultra thick] (13)--(3);
	\draw[ultra thick] (5)--(3);
	\draw[ultra thick] (10)--(14);
	\draw[ultra thick] (28)--(12);
	\draw[ultra thick] (11)--(28);
	\draw[ultra thick] (1)--(15);
	\draw[ultra thick] (16)--(2);
	\draw[ultra thick] (4)--(5);
	\draw[ultra thick] (7)--(6);

\draw[ultra thick] ([shift={(7,-5)}]90:1.0) arc[radius=1.0, start angle=270, end angle= 360];
\draw[ultra thick] ([shift={(9,-5)}]90:1.0) arc[radius=1.0, start angle=90, end angle= 180];

	\end{tikzpicture}
}
\end{center}
\subcaption{The case $q_k < n$.}
\end{minipage}
\begin{minipage}[b]{0.4\linewidth}
\begin{center}
\scalebox{0.25}{
	\begin{tikzpicture}

  \fill (8,-2) coordinate (1) node[right=3pt]  {};
  \fill (8,-6) coordinate (2) node[right=3pt]  {};
  \fill (10,-4) coordinate (3) node[right=3pt]  {};
  \fill (13,-4) coordinate (4) node[right=3pt]  {};
  \fill (8,-5) coordinate (5) node[right=3pt]  {};
  \fill (11,-6) coordinate (6) node[right=3pt]  {};
  \fill (8,-3) coordinate (7) node[right=3pt]  {};
   \fill (6,5) coordinate (8) node[right=3pt]  {};
  \fill (11,-5) coordinate (9) node[right=3pt]  {};
 \fill (6,-4) coordinate (10) node[right=3pt]  {};
 \fill (11,-3) coordinate (11) node[right=3pt]  {};
 \fill (11,4) coordinate (12) node[right=3pt]  {};
 \fill (12,-4) coordinate (13) node[right=3pt]  {};
   \fill (13,2) coordinate (18) node[right=3pt]  {};
   \fill (8,4) coordinate (19) node[right=3pt]  {};
   \fill (6,-1) coordinate (20) node[right=3pt]  {};
   \fill (8,7) coordinate (21) node[above=3pt]  {\fontsize{50pt}{15pt}\selectfont \bf $L_j$};
   \fill (8,6) coordinate (22) node[right=3pt]  {};
   \fill (12,5) coordinate (23) node[right=3pt]  {};
   \fill (13,5) coordinate (24) node[right=3pt]  {};
   \fill (9,5) coordinate (25) node[right=3pt]  {};
   \fill (10,5) coordinate (26) node[right=3pt]  {};
   \fill (11,7) coordinate (27) node[above=3pt]  {\fontsize{50pt}{15pt}\selectfont \bf $L_{j-1}$};
   \fill (11,6) coordinate (28) node[right=3pt]  {};
   \fill (7,5) coordinate (29) node[right=3pt]  {};
   \fill (6,2) coordinate (30) node[right=3pt]  {};
   \fill (13,-1) coordinate (31) node[right=3pt]  {};
  \fill (9,-4) coordinate (32) node[right=3pt]  {};
 \fill (7,-4) coordinate (33) node[right=3pt]  {};

	\draw[ultra thick] (21)--(19);
	\draw[ultra thick] (8)--(24);
	\draw[ultra thick] (32)--(33);
	\draw[ultra thick] (27)--(12);

	\draw[ultra thick] (19)--(1);
	\draw[ultra thick] (18)--(30);
	\draw[ultra thick] (31)--(20);

	\draw[ultra thick] (10)--(33);
	\draw[ultra thick] (11)--(12);
	\draw[ultra thick] (1)--(7);
	\draw[ultra thick] (5)--(2);
	\draw[ultra thick] (4)--(13);
	\draw[ultra thick] (3)--(32);
	\draw[ultra thick] (9)--(6);
	\draw[ultra thick] (5)--(7);

\draw[ultra thick] ([shift={(11,-4)}]90:1.0) arc[radius=1.0, start angle=180, end angle= 270];
\draw[ultra thick] ([shift={(11,-6)}]90:1.0) arc[radius=1.0, start angle=0, end angle= 90];

	\end{tikzpicture}
}
\end{center}
\subcaption{The case $q_k \geq n$.}
\end{minipage}
\caption{\label{fig:extended_inverse_ladder} The diagram $\mathscr{G}(D)$ when $(L_{0, n+j}^{(j)})^{-1} (D \cup \{(0, n+j)\})$ is not defined.}
\end{figure}
\end{proof}

By the proof of \cref{l:+_in_adding_box_case}, we know that there is no ladder moves on $D \in \mathscr{L}(D(w s_j))$ that move $+$ to the box $(0, n+j)$. 
Hence we obtain the following.

\begin{lem}\label{l:not_extended_ws_j}
For $w \in W$ and $1 \leq j \leq n$ such that $\ell(w) < \ell(w s_j)$, the equality $\mathscr{L}^{(j)}(D(w s_j)) = \mathscr{L}(D(w s_j))$ holds.
\end{lem}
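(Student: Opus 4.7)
My plan is to derive the lemma from the observation recorded in the paragraph just before its statement, namely that no $L^{(j)}$-ladder move applied to any $D \in \mathscr{L}(D(ws_j))$ can place a $+$ in the additional box $(0, n+j)$. Granting this observation, the lemma will follow by a short induction on the length of an $L^{(j)}$-move sequence witnessing membership in $\mathscr{L}^{(j)}(D(ws_j))$.

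The inclusion $\mathscr{L}(D(ws_j)) \subseteq \mathscr{L}^{(j)}(D(ws_j))$ is immediate, since any ordinary ladder move at a box $(p, q) \in SY_n$ is in particular an $L^{(j)}$-ladder move on $\mathcal{SPD}_n^{(j)}$. For the reverse inclusion, I would fix $D \in \mathscr{L}^{(j)}(D(ws_j))$ together with a witnessing sequence $D(ws_j) = D_0, D_1, \ldots, D_r = D$ of $L^{(j)}$-ladder moves, and show by induction on $t$ that each intermediate $D_t$ already lies in $\mathscr{L}(D(ws_j)) \subseteq \mathcal{SPD}_n$. The base case $t = 0$ holds by definition. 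For the inductive step, assuming $D_t \in \mathscr{L}(D(ws_j))$, the cited observation precludes the $L^{(j)}$-ladder move $D_t \to D_{t+1}$ from placing a $+$ at $(0, n+j)$; hence $D_{t+1} \subseteq SY_n$ and the move is actually an ordinary ladder move in $SY_n$, so $D_{t+1} \in \mathscr{L}(D(ws_j))$. Setting $t = r$ gives $D \in \mathscr{L}(D(ws_j))$, completing the proof.

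The genuine substance is justifying the observation itself, which is the step I expect to be the main obstacle. However, the paragraph preceding the lemma flags that this fact is already contained in the proof of \cref{l:+_in_adding_box_case}. A hypothetical forward move $L^{(j)}_{p,q}(D) = D \cup \{(0,n+j)\} \setminus \{(p,q)\}$ on $D \in \mathscr{L}(D(ws_j))$ would force the chain hypothesis in columns $\{n-j+1, n+j-1\}$ above $(p,q)$, matching precisely the setup analyzed in the converse direction of the proof of \cref{l:+_in_adding_box_case}. Running the analysis of the inverse move $(L^{(j)}_{0, n+j})^{-1}$ on $D \cup \{(0, n+j)\}$, together with the fact that the pipes $L_{j-1}$ and $L_j$ do not cross in $\mathscr{G}(D)$ (which holds because $\ell(w) < \ell(ws_j)$), rules out the existence of such a move. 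Consequently the present lemma reduces to the inductive bookkeeping described above and no new geometric argument is required.
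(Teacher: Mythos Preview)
Your proposal is correct and matches the paper's own approach. The paper itself records the lemma as an immediate consequence of the observation (extracted from the proof of \cref{l:+_in_adding_box_case}) that no $L^{(j)}$-ladder move on any $D \in \mathscr{L}(D(ws_j))$ can place a $+$ at $(0,n+j)$; you cite the same observation and simply make the routine induction on the length of a ladder-move sequence explicit.
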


Let $\Pi_j \coloneqq \{(p, q) \in \z_{>0}^2 \mid 1 \leq p \leq \gamma_j,\ 1 \leq q \leq 2\}$, where 
\[\gamma_j \coloneqq 
\begin{cases}
n+1 &(\text{if}\ j = 1),\\
2(n-j+1)+1 &(\text{if}\ j > 1). 
\end{cases}\] 
For $D \in \mathscr{L}^{(j)}(D(w))$, define a subset $D^{(j)} \subseteq \Pi_j$ as follows:
\begin{itemize}
\item the $1$-st row of $D^{(j)}$ is defined to be the boxes $(0, n+j-1), (0, n+j)$ in the $0$-th row of $D$;
\item if $j = 1$, then the $(k+1)$-st row of $D^{(j)}$ for $1 \leq k \leq n$ is given as the boxes $(k, n), (k, n+1)$ in the $k$-th row of $D$;
\item if $j > 1$, then the $2k$-th row of $D^{(j)}$ for $1 \leq k \leq n-j+1$ is defined to be the boxes $(k, n-j+1), (k, n-j+2)$ in the $k$-th row of $D$;
\item if $j > 1$, then the $(2k+1)$-st row of $D^{(j)}$ for $1 \leq k \leq n-j+1$ is given as the boxes $(k, n+j-1), (k, n+j)$ in the $k$-th row of $D$. 
\end{itemize}

\begin{ex}
Let $n = 3$, $j = 2$, and take $D$ as in \cref{ex:extended_skew_pipe}.
Then we have $\gamma_2 = 5$ and $\Pi_2 = \begin{ytableau}
\mbox{} &\mbox{} \\
\mbox{} &\mbox{} \\
\mbox{} &\mbox{} \\
\mbox{} &\mbox{} \\
\mbox{} &\mbox{} 
\end{ytableau}$. 
In addition, it holds that 
\[D^{(2)} = \begin{ytableau}
\mbox{} &\mbox{} \\
+ &\mbox{} \\
\mbox{} &+ \\
+ &\mbox{} \\
+&\mbox{} 
\end{ytableau},\quad L_{1, 2}^{(2)} (D)^{(2)} = \begin{ytableau}
\mbox{} &+ \\
\mbox{} &\mbox{} \\
\mbox{} &+ \\
+ &\mbox{} \\
+&\mbox{} 
\end{ytableau}.\]
\end{ex}

For $w \in W$, we write 
\[F_w(x, y) \coloneqq \sum_{D \in \mathscr{L}(D(w))} {\bm x}^D,\quad F_w^{(j)}(x, y) \coloneqq \sum_{D \in \mathscr{L}^{(j)}(D(w))} {\bm x}^D.\]
Define a $\z$-algebra automorphism $T_{x,y} \colon \z[x, y] \rightarrow \z[x, y]$ by permuting $x$ and $y$, that is, $T_{x, y}(f(x, y)) \coloneqq f(y, x)$ for $f(x, y) \in \z[x, y]$. 

\begin{lem}\label{l:invariant_under_permutation}
The polynomial $F_w^{(j)}(x, y)$ is invariant under the action of $T_{x, y}$. 
\end{lem}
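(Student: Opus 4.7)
The plan is to construct an explicit involution $\iota\colon \mathscr{L}^{(j)}(D(w)) \to \mathscr{L}^{(j)}(D(w))$ with the property that $d(x, \iota(D)) = d(y, D)$ and $d(y, \iota(D)) = d(x, D)$ for every $D$. Once such an involution is in hand, each monomial $x^a y^b$ in the expansion of $F_w^{(j)}(x, y)$ is paired with a matching $x^b y^a$, which gives the invariance $T_{x,y} F_w^{(j)} = F_w^{(j)}$ immediately.

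Since the monomial ${\bm x}^D$ depends only on the interior subset $D^{(j)} \subseteq \Pi_j$, the involution will fix $D \setminus D^{(j)}$ and act solely on $D^{(j)}$. The first step is to use the embedding $\mathscr{L}^{(j)}(D(w)) \hookrightarrow \mathscr{L}(D(\hat{w}))$ mentioned after \cref{l:not_extended_ws_j}, where $\hat{w}$ is the induced element of the type $C_{n+1}$ Weyl group, so that by \cref{t:main_result_1} each element of $\mathscr{L}^{(j)}(D(w))$ is a reduced skew pipe dream whose signed permutation is fixed. Then, for a fixed exterior $D \setminus D^{(j)}$, I would describe the admissible interiors $D^{(j)} \subseteq \Pi_j$ as the configurations satisfying a row-by-row adjacency condition between the two columns of $\Pi_j$, coming from the fact that ladder moves $L_{p, n-j+1}^{(j)}$ and $L_{p, n+j-1}^{(j)}$ shift a $+$ between columns one row at a time while respecting the pipes outside $\Pi_j$. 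This adjacency condition will be manifestly symmetric under the column swap $(p, 1) \leftrightarrow (p, 2)$ of $\Pi_j$, because the pairs $\{n-j+1, n-j+2\}$ and $\{n+j-1, n+j\}$ are symmetric with respect to the central pipe in the diagrammatic picture of $\mathscr{G}(D)$.

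Granted this, I would define $\iota(D)$ to be the skew pipe dream whose interior $\iota(D)^{(j)}$ is obtained from $D^{(j)}$ by swapping its two columns and whose exterior is unchanged (with the contribution of the extended box $(0, n+j)$ handled as an extra row of $\Pi_j$). Column-swap symmetry of the admissibility constraint ensures $\iota(D) \in \mathscr{L}^{(j)}(D(w))$, and the counting definitions of $d(x, \cdot)$ and $d(y, \cdot)$ give the required degree exchange, so $\iota$ is a well-defined involution with the desired property.

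The main obstacle will be the first step, namely pinning down the admissibility constraint on $D^{(j)}$ precisely enough to see its column-swap symmetry and checking that the column-swapped configuration is still reachable from $D(w)$ by a sequence of $L_{p,q}^{(j)}$ moves. This is a detailed diagrammatic case analysis in the spirit of Figures \ref{fig:ladder_move_path_2}, \ref{fig:mitosis_removed_2} and \ref{fig:extended_inverse_ladder}, keeping careful track of pipe crossings and elbow joints in the four columns $n-j+1, n-j+2, n+j-1, n+j$ and of the asymmetric role played by the extended box $(0, n+j)$ (which contributes only to $d(y, \cdot)$). Once the local column-swap symmetry is verified row by row, the involution assembles globally and the symmetry of $F_w^{(j)}(x, y)$ follows.
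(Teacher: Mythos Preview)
Your overall strategy---building an involution on $\mathscr{L}^{(j)}(D(w))$ that exchanges $x$- and $y$-degrees---is exactly what the paper does. The gap is in the specific involution you propose: the literal column swap $(p,1)\leftrightarrow(p,2)$ of $\Pi_j$ does \emph{not} preserve $\mathscr{L}^{(j)}(D(w))$.

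There are two independent failures. First, the boundary boxes $(1,1)$ and $(\gamma_j,2)$ of $\Pi_j$ correspond to $(0,n+j-1)$ and $(n-j+1,n+j)$, neither of which lies in $SY_n^{(j)}$; they are forced to be empty. If $D$ has a $+$ in the extended box $(0,n+j)$, your swap would try to place it at $(0,n+j-1)$, which does not exist. Second, and more fundamentally, the admissibility constraint on $D^{(j)}$ is \emph{not} column-symmetric. Once rows of the form $\begin{smallmatrix}+&+\end{smallmatrix}$ are ignored, reducedness forbids a row $\begin{smallmatrix}\cdot&+\end{smallmatrix}$ sitting directly above a row $\begin{smallmatrix}+&\cdot\end{smallmatrix}$ (this is the obstruction to an inverse ladder move seen in the proof of \cref{t:main_result_1}). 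The column swap turns an allowed pattern $\begin{smallmatrix}+&\cdot\\\cdot&+\end{smallmatrix}$ into the forbidden one $\begin{smallmatrix}\cdot&+\\+&\cdot\end{smallmatrix}$, so the image need not be reduced.

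The paper fixes this by a different involution $\tau$: after removing the $\begin{smallmatrix}+&+\end{smallmatrix}$ rows, the forbidden-pattern observation forces $D^{(j)}$ to decompose uniquely into blocks $R(c_1,c_2)$ consisting of $c_2$ rows $\begin{smallmatrix}\cdot&+\end{smallmatrix}$, one row $\begin{smallmatrix}\cdot&\cdot\end{smallmatrix}$, and $c_1$ rows $\begin{smallmatrix}+&\cdot\end{smallmatrix}$, in that vertical order. The involution replaces each $R(c_1,c_2)$ by $R(c_2,c_1)$. This preserves the block structure (hence avoids the forbidden pattern), keeps $(1,1)$ and $(\gamma_j,2)$ empty automatically, is realizable by a sequence of ladder and inverse ladder moves (so stays inside $\mathscr{L}^{(j)}(D(w))$), and visibly swaps the $x$- and $y$-exponents. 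Your proposal is salvageable, but only by replacing the naive column swap with this block-wise exchange.
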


\begin{proof}
For $D \in \mathscr{L}^{(j)}(D(w))$, let us consider $D^{(j)} \subseteq \Pi_j$. 
Since $(0, n+j-1), (n-j+1, n+j) \notin SY_n^{(j)}$, it follows that $(1, 1), (\gamma_j, 2) \notin D^{(j)}$. 
Let us ignore rows in $D^{(j)}$ of the form $\begin{ytableau}
+&+
\end{ytableau}$. 
Then, in a way similar to the argument in the proof of \cref{t:main_result_1}, we deduce that $D^{(j)}$ does not contain a part of the form $\begin{ytableau}
\mbox{} &+\\
+&\mbox{} 
\end{ytableau}$. 
Hence $D^{(j)}$ is uniquely divided into rectangles of the form
\[\ytableausetup{boxsize=1.2em}\begin{ytableau}
\mbox{} &+ \\
\scriptstyle \vdots &\scriptstyle \vdots \\
\mbox{} &+ \\
\mbox{} &\mbox{} \\
+ &\mbox{} \\
\scriptstyle \vdots &\scriptstyle \vdots \\
+&\mbox{} 
\end{ytableau}\]
(cf.\ \cite[Definition 11]{Mil}), where rows of the form $\ytableausetup{smalltableaux}\begin{ytableau}
\mbox{} &+
\end{ytableau}$ or $\begin{ytableau}
+ &\mbox{}
\end{ytableau}$ may not exist. 
For $c_1, c_2 \in \z_{\geq 0}$, let $R(c_1, c_2)$ denote such rectangle with $c_1$ $+$'s in the $1$-st column and with $c_2$ $+$'s in the $2$-nd column.
We define another rectangle $\tau(R(c_1, c_2))$ by $\tau(R(c_1, c_2)) \coloneqq R(c_2, c_1)$ (cf.\ \cite[Lemma 12]{Mil}). 
Applying $\tau$ to all rectangles $R(c_1, c_2)$ in $D^{(j)}$, we obtain $\tau(D^{(j)}) \subseteq \Pi_j$. 
Let $D \mapsto \tau(D)$ be the corresponding modification of $D$. 
Note that the modification $D \mapsto \tau(D)$ can be realized as a sequence of ladder moves and inverse ladder moves. 
Hence we have $\tau(D) \in \mathscr{L}^{(j)}(D(w))$.
In addition, the definition of $\tau$ implies that ${\bm x}^{\tau(D)} = T_{x,y}({\bm x}^D)$. 
Hence it follows that 
\[2F_w^{(j)}(x, y) = \sum_{D \in \mathscr{L}^{(j)}(D(w))} {\bm x}^D + \sum_{D \in \mathscr{L}^{(j)}(D(w))} {\bm x}^{\tau(D)} = \sum_{D \in \mathscr{L}^{(j)}(D(w))} ({\bm x}^D + T_{x,y}({\bm x}^D)),\]
which is invariant under the action of $T_{x, y}$.
This proves the lemma.
\end{proof}

We set
\[\widetilde{F}_w^{(j)}(x, y) \coloneqq \sum_{D \in \mathscr{L}(D(w));\ {\rm mitosis}_j^\top (D) = \emptyset} {\bm x}^D.\]
By the definition of ${\rm mitosis}_j^\top$, the argument in the proof of \cref{l:invariant_under_permutation} can also be applied to the polynomial $\widetilde{F}_w^{(j)}(x, y)$, which implies the following. 

\begin{lem}\label{l:invariant_under_permutation_mitosis_empty}
The polynomial $\widetilde{F}_w^{(j)}(x, y)$ is invariant under the action of $T_{x, y}$. 
\end{lem}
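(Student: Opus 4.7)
The plan is to adapt the involution $\tau$ constructed in the proof of \cref{l:invariant_under_permutation} to the subset $\mathcal{A} \coloneqq \{D \in \mathscr{L}(D(w)) : {\rm mitosis}_j^\top(D) = \emptyset\}$. Since $\tau$ is already known to be an involution on $\mathscr{L}^{(j)}(D(w))$ satisfying ${\bm x}^{\tau(D)} = T_{x,y}({\bm x}^D)$, it suffices to verify that $\tau$ restricts to a bijection on $\mathcal{A}$.

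For $D \in \mathscr{L}(D(w)) \subseteq \mathcal{SPD}_n$, row $1$ of $D^{(j)} \subseteq \Pi_j$ is identically empty (the boxes $(0, n+j-1), (0, n+j)$ lie outside $SY_n$), so in the unique rectangle decomposition of $D^{(j)}$ the topmost rectangle has its middle empty row at position $1$ and necessarily $c_2 = 0$; it takes the form $R(c_1, 0)$, consisting of the empty row $1$ followed by $c_1$ rows of $\begin{ytableau}+&\mbox{}\end{ytableau}$ (possibly interspersed with rows $\begin{ytableau}+&+\end{ytableau}$).

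The key claim is the characterization
\[
{\rm mitosis}_j^\top(D) = \emptyset \iff c_1 = 0 \text{ in the topmost rectangle of } D^{(j)}.
\]
To prove this, I would unravel condition \eqref{cond:mitosis} row-by-row via the correspondence between $r$-indices and rows of $D^{(j)}$ (larger $r$ corresponding to higher rows): the index $r_0$ defined in \eqref{eq:mitosis_removed_point} becomes the topmost row in $\{2, \ldots, \gamma_j\}$ whose column-$2$ box is absent from $D$, and \eqref{cond:mitosis} amounts to the requirement that every row strictly above $r_0$ is $\begin{ytableau}+&+\end{ytableau}$ while row $r_0$ itself is $\begin{ytableau}+&\mbox{}\end{ytableau}$. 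Because the pattern $\begin{ytableau}\mbox{}&+\\+&\mbox{}\end{ytableau}$ is forbidden, this happens precisely when the topmost rectangle contains at least one $\begin{ytableau}+&\mbox{}\end{ytableau}$ row, i.e.\ when $c_1 > 0$; a short case analysis in the $c_1 = 0$ situation (distinguishing whether a second rectangle exists, and if so whether its $c_2$ vanishes) confirms that \eqref{cond:mitosis} then fails.

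Granted the characterization, $\tau$ sends the topmost rectangle $R(c_1, 0)$ to $R(0, c_1)$; if $c_1 > 0$ this would place a $+$ at the box $(0, n+j) \in SY_n^{(j)} \setminus SY_n$, which is excluded from $\mathcal{SPD}_n$. Hence $\tau(D) \in \mathcal{SPD}_n$ precisely when $D \in \mathcal{A}$. Combined with $\tau(D) \in \mathscr{L}^{(j)}(D(w))$ and the path-model invariance under ladder moves (\cref{l:ladder_move_of_path_model}) applied to the ladder-move sequence realizing $\tau$, this gives $w_{\tau(D)} = w$ and $|\tau(D)| = |D| = \ell(w_0) - \ell(w)$, whence $\tau(D) \in \mathscr{L}(D(w))$ by \cref{t:main_result_1}; moreover the topmost rectangle of $\tau(D)^{(j)}$ is still $R(0,0)$, so the characterization yields ${\rm mitosis}_j^\top(\tau(D)) = \emptyset$. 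Therefore
\[
\widetilde{F}_w^{(j)}(x, y) = \sum_{D \in \mathcal{A}} {\bm x}^D = \sum_{D \in \mathcal{A}} {\bm x}^{\tau(D)} = T_{x, y}\bigl(\widetilde{F}_w^{(j)}(x, y)\bigr).
\]
The main technical obstacle is the case analysis in the first characterization, which requires careful tracking of $\begin{ytableau}+&+\end{ytableau}$-interspersions and of how $r_0$ sits within the first rectangle.
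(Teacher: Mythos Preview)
Your proof is correct and follows essentially the same approach as the paper, which disposes of the lemma in a single sentence (``by the definition of ${\rm mitosis}_j^\top$, the argument in the proof of \cref{l:invariant_under_permutation} can also be applied''). Your explicit characterization ${\rm mitosis}_j^\top(D)=\emptyset \iff c_1=0$ in the topmost rectangle of $D^{(j)}$ is exactly the content the paper leaves implicit, and your verification that $\tau$ then fixes the topmost rectangle and hence preserves $\mathcal{A}$ is the intended mechanism. One small point worth making explicit: when $D\in\mathcal{A}$ the topmost rectangle $R(0,0)$ is untouched by $\tau$, so the ladder and inverse ladder moves realizing $\tau$ involve only boxes in rows $\geq 2$ of $D^{(j)}$, i.e.\ boxes in $SY_n$; this is why \cref{l:ladder_move_of_path_model} applies directly in $\mathcal{SPD}_n$ rather than requiring the $SY_{n+1}$-embedding.
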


\begin{lem}\label{l:lemma_F(y, x)}
If $\ell(w) < \ell(w s_j)$, then it holds that 
\begin{equation}\label{eq:lemma_F(y, x)}
\begin{aligned}
F_w(x, y) + yF_{w s_j}(x, y) = T_{x, y} (F_w(x, y)) + x F_{w s_j}(x, y) = F_w^{(j)}(x, y).
\end{aligned}
\end{equation}
\end{lem}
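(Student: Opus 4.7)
The plan is to establish the first equality $F_w^{(j)} = F_w + y F_{w s_j}$ by partitioning the index set $\mathscr{L}^{(j)}(D(w))$ according to whether the extra box $(0, n+j)$ is occupied, and then to deduce the second equality by applying $T_{x, y}$ and invoking the invariance results of \cref{l:invariant_under_permutation} together with \cref{l:not_extended_ws_j}.

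Write $\mathscr{L}^{(j)}(D(w)) = A \sqcup B$ with $A = \{D \mid (0, n+j) \in D\}$ and $B = \{D \mid (0, n+j) \notin D\}$. For $A$: by \cref{l:+_in_adding_box_case} the map $D \mapsto D \setminus \{(0, n+j)\}$ is a bijection $A \xrightarrow{\sim} \mathscr{L}(D(w s_j))$, and the definitions of $d(x, D)$ and $d(y, D)$ show that the extra box $(0, n+j)$ contributes exactly one factor of $y$: its column $n+j$ lies outside $\{n-j+1, n+j-1\}$ (no $x$ contribution), while $0 \le n-j+1$ and $n+j \in \{n-j+2, n+j\}$ (exactly one $y$ contribution). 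Hence $\sum_{D \in A} \bm{x}^D = y F_{w s_j}(x, y)$. For $B$: the claim is $B = \mathscr{L}(D(w))$. The inclusion $\mathscr{L}(D(w)) \subseteq B$ is routine, since every ladder move $L_{p, q}$ in $\mathcal{SPD}_n$ agrees with $L_{p, q}^{(j)}$ in $\mathcal{SPD}_n^{(j)}$ on elements disjoint from $(0, n+j)$ (the target $(p_\ell, q_\ell + 1)$ cannot equal $(0, n+j)$ inside $SY_n$, since that would force the pair box $(p_\ell, q_\ell) = (0, n+j-1)$, which does not lie in $SY_n^{(j)}$). Conversely, if $D \in B$, then $D \subseteq SY_n$, and $|D| = \ell(w_0) - \ell(w)$ by the cardinality preservation of ladder moves; by \cref{t:main_result_1} it then remains to verify $w_D = w$ in $\mathcal{SPD}_n$. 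This is handled through the embedding $SY_n^{(j)} \hookrightarrow SY_{n+1}$ described before \cref{l:+_in_adding_box_case}: under this embedding $D$ is a reduced skew pipe dream in $\mathcal{SPD}_{n+1}$ whose signed permutation is prescribed by $D(w)$ via \cref{t:main_result_1} applied in rank $n+1$; when $(0, n+j) \notin D$, the new top row of $SY_{n+1}$ is inactive in $\mathscr{G}(D)$, so the extra pipe passes through trivially and the remaining $n$ pipes recover $w_D = w$. Together these give $F_w^{(j)} = F_w + y F_{w s_j}$.

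For the second equality, apply $T_{x, y}$ to the first. By \cref{l:invariant_under_permutation}, $T_{x, y}(F_w^{(j)}) = F_w^{(j)}$. Moreover, \cref{l:not_extended_ws_j} gives $F_{w s_j} = F_{w s_j}^{(j)}$, and \cref{l:invariant_under_permutation} applied to $w s_j$ then yields $T_{x, y}(F_{w s_j}) = T_{x, y}(F_{w s_j}^{(j)}) = F_{w s_j}^{(j)} = F_{w s_j}$. Thus applying $T_{x, y}$ to $F_w^{(j)} = F_w + y F_{w s_j}$ gives $F_w^{(j)} = T_{x, y}(F_w) + x F_{w s_j}$, as required.

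The main technical obstacle is the reverse inclusion $B \subseteq \mathscr{L}(D(w))$: while cardinality preservation is automatic, showing $w_D = w$ requires careful bookkeeping of signed permutations across the $\mathcal{SPD}_n^{(j)}$ interpretation, which is most cleanly organized by lifting to $\mathcal{SPD}_{n+1}$ and invoking \cref{t:main_result_1} there; once this identification is in place, the remaining algebraic manipulation with $T_{x, y}$ is straightforward.
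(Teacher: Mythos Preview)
Your proof is correct and follows the same route as the paper: partition $\mathscr{L}^{(j)}(D(w))$ according to whether $(0,n+j)$ is occupied, handle the $A$ piece via \cref{l:+_in_adding_box_case}, and deduce the second equality by applying $T_{x,y}$ and invoking \cref{l:not_extended_ws_j} together with \cref{l:invariant_under_permutation}.

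The one place where you work harder than necessary is the inclusion $B \subseteq \mathscr{L}(D(w))$. Your detour through the embedding $SY_n^{(j)} \hookrightarrow SY_{n+1}$ and \cref{t:main_result_1} at rank $n+1$ is valid in principle, but the step ``the extra pipe passes through trivially and the remaining $n$ pipes recover $w_D = w$'' is left under-justified. The paper bypasses this entirely, since $B = \mathscr{L}(D(w))$ follows directly from the definitions: an extended ladder move $L^{(j)}_{p,q}$ at $(p,q) \in SY_n$ can only add, never remove, a $+$ at $(0,n+j)$; hence any sequence of such moves from $D(w)$ to $D$ with $(0,n+j) \notin D$ keeps $(0,n+j)$ empty at every step, so each move coincides with the ordinary ladder move in $\mathcal{SPD}_n$ and $D \in \mathscr{L}(D(w))$.
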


\begin{proof}
Since $y {\bm x}^D = {\bm x}^{D \cup \{(0, n+j)\}}$ for $D \in \mathscr{L}(D(ws_j))$, it follows by \cref{l:+_in_adding_box_case} that 
\begin{equation}\label{eq:+_in_adding_box_case}
\begin{aligned}
yF_{w s_j}(x, y) = \sum_{D \in \mathscr{L}(D(ws_j))} y {\bm x}^D = \sum_{D \in \mathscr{L}^{(j)}(D(w));\ (0, n+j) \in D} {\bm x}^D.
\end{aligned}
\end{equation}
Hence we have
\begin{equation}\label{eq:first_assertion}
\begin{aligned}
F_w(x, y) + yF_{w s_j}(x, y) &= \sum_{D \in \mathscr{L}(D(w))} {\bm x}^D + \sum_{D \in \mathscr{L}^{(j)}(D(w));\ (0, n+j) \in D} {\bm x}^D\\
&= \sum_{D \in \mathscr{L}^{(j)}(D(w))} {\bm x}^D = F_w^{(j)}(x, y).
\end{aligned}
\end{equation}
In addition, it follows that 
\begin{align*}
T_{x, y} (F_w(x, y)) + xF_{w s_j}(x, y) &= T_{x, y} (F_w(x, y)) + T_{x, y} (yF_{w s_j}(x, y))\quad(\text{by Lemmas}\ \ref{l:not_extended_ws_j}\ \text{and}\ \ref{l:invariant_under_permutation})\\
&= T_{x, y} (F_w^{(j)}(x, y))\quad(\text{by}\ \eqref{eq:first_assertion})\\
&= F_w^{(j)}(x, y)\quad(\text{by Lemma}\ \ref{l:invariant_under_permutation}).
\end{align*}
From these, we conclude the lemma.
\end{proof}

\begin{lem}\label{l:mitosis_polynomial}
If $\ell(w) < \ell(w s_j)$, then it holds that 
\begin{equation}\label{eq:mitosis_polynomial_lemma}
\begin{aligned}
(x - y) \sum_{D \in {\rm mitosis}_j^\top (\mathscr{L}(D(w)))} {\bm x}^D = F_w(x, y) - T_{x, y} (F_w(x, y)).
\end{aligned}
\end{equation}
\end{lem}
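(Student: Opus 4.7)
The plan begins by splitting $\mathscr{L}(D(w)) = \mathcal{A} \sqcup \mathcal{B}$, where $\mathcal{A} = \{D \mid {\rm mitosis}_j^\top(D) = \emptyset\}$, so that $\sum_{D \in \mathcal{A}} {\bm x}^D = \widetilde{F}_w^{(j)}(x,y)$. By \cref{l:invariant_under_permutation_mitosis_empty}, this polynomial is $T_{x,y}$-invariant, and therefore the $\mathcal{A}$-contribution cancels in $F_w(x,y) - T_{x,y}(F_w(x,y))$, reducing the right-hand side of \eqref{eq:mitosis_polynomial_lemma} to
\begin{equation*}
F_w(x,y) - T_{x,y}(F_w(x,y)) = \sum_{D \in \mathcal{B}} \bigl({\bm x}^D - T_{x,y}({\bm x}^D)\bigr).
\end{equation*}

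The core step is a local analysis of ${\rm mitosis}_j^\top$ on each $D \in \mathcal{B}$ via the rectangle decomposition of $D^{(j)} \subseteq \Pi_j$ established in the proof of \cref{l:invariant_under_permutation}. I will argue that condition $(\dagger)$ together with the definition of ${\rm start}_j^\top(D)$ singles out a distinguished \emph{active} rectangle $R(a, b)$ with $a \geq 1$ containing the $+$ at $(p_{r_0}, q_{r_0})$, and that the ladder moves defining ${\rm mitosis}_j^\top(D)$ are confined to this active rectangle. Denoting the weight of the remaining rectangles by $W(D)$, so that ${\bm x}^D = W(D)\, x^a y^b$, inspection of the ladder moves shows that ${\rm mitosis}_j^\top(D)$ produces exactly the skew pipe dreams obtained by replacing $R(a, b)$ by $R(a-1, b), R(a-2, b+1), \ldots, R(0, a+b-1)$ in turn. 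Summing the geometric series and multiplying by $(x-y)$ yields the per-$D$ identity
\begin{equation*}
(x-y) \sum_{D' \in {\rm mitosis}_j^\top(D)} {\bm x}^{D'} = W(D)\bigl(x^a y^b - y^{a+b}\bigr) = {\bm x}^D - W(D)\, y^{a+b}.
\end{equation*}

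The final step is to assemble these per-$D$ identities into the global statement. This requires two ingredients: first, verifying that the family $\{{\rm mitosis}_j^\top(D)\}_{D \in \mathcal{B}}$ is pairwise disjoint, so that summing the local identities over $\mathcal{B}$ produces $(x-y)\sum_{D' \in {\rm mitosis}_j^\top(\mathscr{L}(D(w)))} {\bm x}^{D'}$ on the left; this disjointness I expect to derive by inverse-engineering the active rectangle of $D$ uniquely from any $D' \in {\rm mitosis}_j^\top(D)$. Second, one must establish the matching of the residual contributions
\begin{equation*}
\sum_{D \in \mathcal{B}} W(D)\, y^{a(D)+b(D)} = \sum_{D \in \mathcal{B}} T_{x,y}({\bm x}^D).
\end{equation*}
This is the main technical obstacle. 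My intended approach is to adapt the involution $\tau$ from the proof of \cref{l:invariant_under_permutation}, applied to the non-active rectangles (giving an identification $T_{x,y}(W(D)) \leftrightarrow W(D')$ for a partner $D'$), while separately tracking the active rectangle; the delicate point is that $\tau$ need not preserve $\mathcal{B}$ when $b(D) = 0$, so one must match these boundary cases with skew pipe dreams in $\mathcal{A}$ via the extension to $\mathcal{SPD}_n^{(j)}$ used in \cref{l:+_in_adding_box_case}, carefully organizing the bookkeeping so that the surplus and deficit terms cancel.
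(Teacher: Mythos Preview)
Your overall architecture matches the paper's proof: split $\mathscr{L}(D(w))$ into $\mathcal{A}$ and $\mathcal{B}$, cancel $\mathcal{A}$ via \cref{l:invariant_under_permutation_mitosis_empty}, compute a per-$D$ geometric-series identity on $\mathcal{B}$, and then match the residual terms. The per-$D$ identity and the disjointness argument are fine. Two points deserve correction, both concerning the final residual step.

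First, your active rectangle always has $b=0$. For $D\in\mathscr{L}(D(w))$ the box $(0,n+j)$ is absent, so row~$1$ of $D^{(j)}$ is (empty,\,empty); hence the first rectangle in the decomposition of \cref{l:invariant_under_permutation} is $R(a,0)$. With $b=0$ one has $W(D)={\bm x}^{\widehat D}$ where $\widehat D=D\setminus\{(p_{r_0},q_{r_0}),\ldots,(p_{r_{a-1}},q_{r_{a-1}})\}$, and your residual identity becomes $\sum_{D\in\mathcal B}y^{a(D)}{\bm x}^{\widehat D}=\sum_{D\in\mathcal B}y^{a(D)}T_{x,y}({\bm x}^{\widehat D})$, i.e.\ the $T_{x,y}$-invariance of $\sum_{D:\,a(D)=a}{\bm x}^{\widehat D}$ for each fixed~$a$.

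Second, your worry that ``$\tau$ need not preserve $\mathcal B$ when $b(D)=0$'' and the proposed detour through \cref{l:+_in_adding_box_case} are unnecessary. The paper handles the residual exactly by your ``$\tau$ on the non-active rectangles'' idea, and it does work without boundary complications: applying $\tau$ to every rectangle of $D^{(j)}$ \emph{except} the first leaves row~$1$ and the first rectangle $R(a,0)$ untouched, so the image $D'$ still lies in $\mathscr{L}(D(w))$, still satisfies~\eqref{cond:mitosis}, and still has $a(D')=a$. This partial $\tau$ is an involution on $\{D\in\mathcal B:\,a(D)=a\}$ with ${\bm x}^{\widehat{D'}}=T_{x,y}({\bm x}^{\widehat D})$, which is precisely what you need. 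No appeal to the extended diagram $\mathcal{SPD}_n^{(j)}$ or to \cref{l:+_in_adding_box_case} is required; that lemma enters only later, in the proof of \cref{l:lemma_F(y, x)}.
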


\begin{proof}
By the definition of ${\rm mitosis}_j^\top$, it follows that ${\rm mitosis}_j^\top(D) \cap {\rm mitosis}_j^\top(D^\prime) = \emptyset$ for all $D, D^\prime \in \mathscr{L}(D(w))$ such that $D \neq D^\prime$. 
Hence we have 
\[\sum_{D \in {\rm mitosis}_j^\top (\mathscr{L}(D(w)))} {\bm x}^D = \sum_{D \in \mathscr{L}(D(w))} \sum_{D^\prime \in {\rm mitosis}_j^\top (D)} {\bm x}^{D^\prime}.\]
Take $D \in \mathscr{L}(D(w))$ such that ${\rm mitosis}_j^\top(D) \neq \emptyset$, and define $r_0$ as in \eqref{eq:mitosis_removed_point}. 
Then we can write the elements of ${\rm mitosis}_j^\top (D)$ as
\[D \setminus \{(p_{r_0}, q_{r_0})\}, L_{p_{r_1}, q_{r_1}}(D \setminus \{(p_{r_0}, q_{r_0})\}), \ldots, L_{p_{r_k}, q_{r_k}} \cdots L_{p_{r_1}, q_{r_1}}(D \setminus \{(p_{r_0}, q_{r_0})\})\]
for some $1 \leq r_k < \cdots < r_1 < r_0$ such that $q_{r_1}, \ldots, q_{r_k} \in \{n-j+1, n+j-1\}$. 
Set $\widehat{D} \coloneqq D \setminus \{(p_{r_0}, q_{r_0}), (p_{r_1}, q_{r_1}), \ldots, (p_{r_k}, q_{r_k})\}$. 
Then it follows that 
\begin{align*}
(x - y) \sum_{D^\prime \in {\rm mitosis}_j^\top (D)} {\bm x}^{D^\prime} &= (x - y) (x^k {\bm x}^{\widehat{D}} + x^{k-1} y {\bm x}^{\widehat{D}} + \cdots + y^k {\bm x}^{\widehat{D}})\\
&= (x^{k+1} - y^{k+1}) {\bm x}^{\widehat{D}} = {\bm x}^D - y^{k+1} {\bm x}^{\widehat{D}}. 
\end{align*}
Hence the left hand side of \eqref{eq:mitosis_polynomial_lemma} is computed as 
\begin{align*}
&(x - y) \sum_{D \in \mathscr{L}(D(w))} \sum_{D^\prime \in {\rm mitosis}_j^\top (D)} {\bm x}^{D^\prime} \\
=\ &\sum_{k \in \z_{\geq 0}} \sum_{D \in \mathscr{L}(D(w));\ |{\rm mitosis}_j^\top(D)| = k+1} ({\bm x}^D - y^{k+1} {\bm x}^{\widehat{D}})\\
=\ &F_w(x, y) - \widetilde{F}_w^{(j)}(x, y) - \sum_{k \in \z_{\geq 0}} y^{k+1} \sum_{D \in \mathscr{L}(D(w));\ |{\rm mitosis}_j^\top(D)| = k+1} {\bm x}^{\widehat{D}}. 
\end{align*}
The argument in the proof of \cref{l:invariant_under_permutation} implies that $\sum_{D \in \mathscr{L}(D(w));\ |{\rm mitosis}_j^\top(D)| = k+1} {\bm x}^{\widehat{D}}$ is invariant under the action of $T_{x, y}$. 
Hence we see that 
\begin{align*}
\sum_{k \in \z_{\geq 0}} y^{k+1} \sum_{D \in \mathscr{L}(D(w));\ |{\rm mitosis}_j^\top(D)| = k+1} {\bm x}^{\widehat{D}} &= T_{x, y} \left(\sum_{k \in \z_{\geq 0}} x^{k+1} \sum_{D \in \mathscr{L}(D(w));\ |{\rm mitosis}_j^\top(D)| = k+1} {\bm x}^{\widehat{D}}\right)\\
&= T_{x, y} \left(\sum_{k \in \z_{\geq 0}} \sum_{D \in \mathscr{L}(D(w));\ |{\rm mitosis}_j^\top(D)| = k+1} {\bm x}^D\right) \\
&= T_{x, y} (F_w(x, y) - \widetilde{F}_w^{(j)}(x, y))\\
&= T_{x, y} (F_w(x, y)) - \widetilde{F}_w^{(j)}(x, y)\quad (\text{by Lemma}\ \ref{l:invariant_under_permutation_mitosis_empty}).
\end{align*}
From these, we conclude the lemma. 
\end{proof}

Now we show \cref{t:main_result_2}. 

\begin{proof}[{Proof of \cref{t:main_result_2}}]
By \cref{l:mitosis_inclusion}, it suffices to prove that 
\[|\mathscr{L}(D(w s_j))| = |{\rm mitosis}_j^\top (\mathscr{L}(D(w)))|.\]
It follows by \cref{l:mitosis_polynomial} that 
\begin{align*}
(x - y) \sum_{D \in {\rm mitosis}_j^\top (\mathscr{L}(D(w)))} {\bm x}^D &= F_w(x, y) - T_{x, y} (F_w(x, y))\\
&= xF_{w s_j}(x, y) - y F_{w s_j}(x, y)\quad(\text{by Lemma}\ \ref{l:lemma_F(y, x)})\\
&= (x - y) F_{w s_j}(x, y).
\end{align*}
Hence we deduce that 
\[\sum_{D \in {\rm mitosis}_j^\top (\mathscr{L}(D(w)))} {\bm x}^D = F_{w s_j}(x, y) = \sum_{D \in \mathscr{L}(D(w s_j))} {\bm x}^D.\]
By comparing the numbers of terms, we conclude that 
\[|{\rm mitosis}_j^\top (\mathscr{L}(D(w)))| = |\mathscr{L}(D(w s_j))|.\]
This proves the theorem.
\end{proof}

\bibliographystyle{jplain} 
\def\cprime{$'$} 

\end{document}